\documentclass[11pt,letterpaper]{amsart}

\usepackage{amsmath,amssymb,amsthm,mathtools}
\usepackage{aliascnt}
\usepackage{enumitem}
\usepackage{microtype}
\usepackage{xcolor}
\usepackage[hidelinks]{hyperref}
\usepackage[capitalise,nameinlink]{cleveref}
\usepackage{parskip}
\hypersetup{
  pdftitle={Sharp refined-direction Kakeya estimates in finite Heisenberg groups},
  pdfauthor={Thang Pham, Andrea Pinamonti, Dung The Tran, and Boqing Xue},
  pdfsubject={Finite field Kakeya estimates in Heisenberg groups}
}

\allowdisplaybreaks
\numberwithin{equation}{section}

\newtheorem{theorem}{Theorem}[section]
\newaliascnt{proposition}{theorem}
\newtheorem{proposition}[proposition]{Proposition}
\aliascntresetthe{proposition}
\newaliascnt{lemma}{theorem}
\newtheorem{lemma}[lemma]{Lemma}
\aliascntresetthe{lemma}
\newaliascnt{corollary}{theorem}
\newtheorem{corollary}[corollary]{Corollary}
\aliascntresetthe{corollary}

\theoremstyle{definition}
\newaliascnt{definition}{theorem}
\newtheorem{definition}[definition]{Definition}
\aliascntresetthe{definition}

\theoremstyle{remark}
\newaliascnt{remark}{theorem}
\newtheorem{remark}[remark]{Remark}
\aliascntresetthe{remark}

\crefname{proposition}{Proposition}{propositions}
\crefname{lemma}{Lemma}{lemmas}
\crefname{corollary}{Corollary}{corollaries}
\crefname{definition}{Definition}{definitions}
\crefname{remark}{Remark}{remarks}

\newcommand{\Fq}{\mathbb F_q}
\newcommand{\C}{\mathbb C}
\newcommand{\PP}{\mathbb P}
\newcommand{\HH}{\mathbb H}
\newcommand{\Dn}{\mathcal D_n}
\newcommand{\Mrd}{\mathcal M^{\mathrm{rd}}_{\HH_n}}
\newcommand{\one}{\mathbf 1}
\newcommand{\norm}[1]{\lVert #1\rVert}
\newcommand{\Dir}{\operatorname{Dir}}
\newcommand{\Sp}{\operatorname{Sp}}
\newcommand{\conv}{\operatorname{conv}}

\ExplSyntaxOn
\box_new:N \l__revision_word_box
\cs_new_protected:Npn \__revision_strike_word:n #1
 {
  \hbox_set:Nn \l__revision_word_box {#1}
  \leavevmode
  \rlap{\raisebox{.45ex}{\rule{\box_wd:N \l__revision_word_box}{.4pt}}}
  \box_use:N \l__revision_word_box
 }
\cs_new_protected:Npn \__revision_deleted:n #1
 {
  \group_begin:
  \color{red}
  \seq_set_split:Nnn \l_tmpa_seq { ~ } {#1}
  \seq_map_indexed_inline:Nn \l_tmpa_seq
   {
    \int_compare:nNnF {##1} = {1}
     {
      \penalty0
      \hskip\fontdimen2\font plus\fontdimen3\font
       minus\fontdimen4
     }
    \__revision_strike_word:n {##2}
   }
  \group_end:
 }

\ExplSyntaxOff

\title[Refined-direction Kakeya estimates]
{Sharp refined-direction Kakeya estimates\\
in finite Heisenberg groups}

\author{Thang Pham}
\address{Institute of Mathematics and Interdisciplinary Sciences,
Xidian University, China}
\email{thangpham.math@gmail.com}

\author{Andrea Pinamonti}
\address{Department of Mathematics, University of Trento, Italy}
\email{andrea.pinamonti@unitn.it}

\author{Dung The Tran}
\address{VNU University of Science, Hanoi, Vietnam}
\email{tranthedung56@gmail.com}

\author{Boqing Xue}
\address{Institute of Mathematical Sciences, ShanghaiTech University,
China}
\email{xuebq@shanghaitech.edu.cn}

\subjclass[2020]{Primary 42B25; Secondary 05B25, 51E20}
\keywords{finite Heisenberg group, Kakeya maximal operator,
refined direction, polynomial method, method of multiplicities,
Furstenberg set}

\begin{document}

\begin{abstract}
Let \(n\geq 2\) and let \(q\) be an odd prime power. The first aim of this
paper is to prove that, for every \(E\subset \mathbb{H}_n(\mathbb{F}_q)\)
and every \(\lambda>0\), the following sharp rich-direction estimate holds
\[
 \left|
 \left\{
 \vartheta\in\mathcal{D}_n:
 \mathcal{M}^{\mathrm{rd}}\mathbf{1}_E(\vartheta)\geq\lambda
 \right\}
 \right|
 \lesssim_n
 q^{2n-1}|E|\lambda^{-2n}.
\]
The second aim is to determine, for every \(1\leq u,v\leq\infty\), the
sharp exponent of \(q\) in the corresponding \(\ell^u\to\ell^v\)
estimate. More precisely, we prove that
\[
 A_n^{\mathrm{rd}}(u,v)
 =
 \max\left\{
 \frac{2n-1}{v},\
 1-\frac1u,\
 \frac{2n}{v}-\frac1u,\
 1+\frac{2n}{v}-\frac{2n+1}{u}
 \right\}.
\]
The proof combines the polynomial method with multiplicities and a
probabilistic covering argument based on the action of the affine
symplectic group.
\end{abstract}
\maketitle
\section{Introduction}
\label{sec:introduction}
Kakeya problems measure how efficiently lines pointing in different
directions can overlap. Over a finite field, a Kakeya set in $\Fq^d$ is a set containing an affine line in every direction. The connection between finite field Kakeya problems and harmonic analysis was developed by Mockenhaupt and Tao~\cite{MockenhauptTao04}, while Dvir's polynomial argument~\cite{Dvir09} showed that every finite field Kakeya set has cardinality comparable to that of the ambient space. The polynomial method, its multiplicity refinements, and the corresponding maximal estimates were subsequently developed in \cite{DKSS13,EOT10}. Related finite field Furstenberg problems, in which one prescribes only a portion of a line in a selected family of directions, were studied in \cite{DharDvirLund21,DharDvirLund24,EllenbergErman16}.

The present paper studies an analogous problem for horizontal lines in finite Heisenberg groups. This question has both a finite field and a sub-Riemannian motivation. In the real first Heisenberg group, Liu~\cite{Liu22} proved the sharp lower bound for the Hausdorff dimension of Heisenberg Kakeya sets, and Venieri~\cite{Venieri14} related maximal
estimates to dimension bounds for Besicovitch sets.  More recently, F\"assler, Pinamonti, and Wald~\cite{FasslerPinamontiWald25} proved a Kakeya maximal inequality for horizontal tubes and recovered Liu's
dimension bound.

The finite field problem considered here originates in our previous
work~\cite{PPTX26}. It is useful to distinguish the two maximal
operators studied there. For the operator parameterized only by
projective horizontal directions, projection onto the horizontal layer $\Fq^{2n}$ reduces the problem to the ordinary affine finite field Kakeya maximal operator; this gives the exact $\ell^u\to\ell^v$ growth exponents for every $n$. The genuinely Heisenberg problem appears when one also records the central slope of a horizontal line. This leads to the \emph{refined-direction} operator. In~\cite{PPTX26}, the refined problem was solved sharply for $\HH_1(\Fq)$. The proof uses the planar Kakeya estimate, Plancherel's theorem, character orthogonality, and a bounded-fiber estimate for an explicit quadratic map, and does not use polynomial vanishing. It is worth noting that the Fourier-analytic framework in \cite{PPTX26} was developed with the broader hope of finding a purely non-algebraic proof for the affine Kakeya problem in $\mathbb{F}_q^3$.

A central point of the present paper is that the refined-direction problem for $n\geq2$ is not a formal extension of the $n=1$ argument.  
When $n=1$, the critical diagonal is $\ell^2\to\ell^2$, exactly the regime in which Plancherel and character orthogonality are available. For $n\geq2$, the critical diagonal becomes
$
 \ell^{2n}\longrightarrow\ell^{2n}.
$
As observed in~\cite[Section~1]{PPTX26}, a direct extension of the $n=1$ Fourier argument to $n\geq2$ yields only an $\ell^2\to\ell^{2n}$ estimate with factor $q^{n/2}$.  By contrast, the point-mass example shows that the exponent $(2n-1)/(2n)$ is necessary, and the level-set theorem in this paper attains this power exactly. Thus, for $n\geq2$, the difficulty is not merely a loss in the power of $q$ in the $n=1$ Fourier argument. Rather, the critical estimate itself moves from the $\ell^2$ setting to the genuinely higher-moment $\ell^{2n}$ setting, requiring a different mechanism.

\subsection{Horizontal lines and refined directions}

Fix $n\geq2$ and set
\[
 r:=2n,
 \qquad
 V:=\Fq^r=\Fq^n\times\Fq^n.
\]
We equip $V$ with the standard nondegenerate alternating form
\begin{equation}\label{eq:symplectic-form}
 \sigma((x,y),(x',y')):=x\cdot y'-y\cdot x'.
\end{equation}
Throughout, $q$ is an odd prime power and $n$ is fixed.  We realize the
finite Heisenberg group as
\[
 \HH_n(\Fq)=V\times\Fq
\]
with group law
\begin{equation}\label{eq:group-law}
 (z,t)\cdot(z',t')
 =
 (z+z',t+t'+\sigma(z,z')).
\end{equation}
If $p=(z_0,t_0)\in\HH_n(\Fq)$ and $w\in V\setminus\{0\}$, the horizontal
line through $p$ with spatial direction $[w]$ is
\begin{equation}\label{eq:horizontal-line}
 L_{p,w}
 =
 \bigl\{
   (z_0+sw,t_0+s\sigma(z_0,w)):s\in\Fq
 \bigr\}.
\end{equation}
The spatial direction $[w]\in\PP^{r-1}(\Fq)$ does not record the central
slope of the line. Following \cite{PPTX26}, we therefore define the
\emph{refined direction} by
\begin{equation}\label{eq:refined-direction}
 \Dir(L_{p,w})
 =
 [w:\sigma(z_0,w)]
 \in\PP^r(\Fq).
\end{equation}
This projective point is independent of the chosen base point on the
line and of the representative of $[w]$.  Moreover, since
$L_{p,w}$ is an ordinary affine line in $V\times\Fq$ with direction
vector $(w,\sigma(z_0,w))$, its refined direction is exactly its ambient
projective direction.

The space of refined directions is
\begin{equation}\label{eq:direction-space}
 \Dn
 :=
 \bigl\{
 [w:c]\in\PP^r(\Fq):w\neq0
 \bigr\}.
\end{equation}
Every element of $\Dn$ occurs as the refined direction of a horizontal
line, because $z\mapsto\sigma(z,w)$ is surjective whenever $w\neq0$.
We shall use the cardinalities
\begin{equation}\label{eq:cardinalities}
 \begin{aligned}
 |\HH_n(\Fq)|&=q^{r+1},\\
 |\PP^{r-1}(\Fq)|&=1+q+\cdots+q^{r-1},\\
 |\Dn|&=q+q^2+\cdots+q^r.
 \end{aligned}
\end{equation}
For $F:\HH_n(\Fq)\to\C$, define the refined-direction maximal operator
by
\begin{equation}\label{eq:maximal-operator}
 (\Mrd F)(\vartheta)
 :=
 \max_{\substack{L\ \mathrm{horizontal}\\
                 \Dir(L)=\vartheta}}
 \sum_{p\in L}|F(p)|,
 \qquad
 \vartheta\in\Dn.
\end{equation}
All $\ell^p$-norms below are taken with respect to the counting measure.
We write $X\lesssim Y$ if $X\leq CY$ for an absolute constant $C$, and
$X\lesssim_nY$ if the constant may depend on $n$ but not on $q$ or on
the sets, functions, and parameters under consideration.  The notation
$X\approx Y$ has the corresponding two-sided meaning.

For $E\subset\HH_n(\Fq)$ and $\lambda>0$, set
\[
 \Omega_\lambda(E)
 :=
 \bigl\{
   \vartheta\in\Dn:
   \Mrd\one_E(\vartheta)\geq\lambda
 \bigr\}.
\]
Thus, $\Omega_\lambda(E)$ is the set of refined directions for which
some horizontal line contains at least $\lambda$ points of $E$.

\subsection{Sharp estimates for rich refined directions}

Our principal geometric estimate gives uniform control of rich refined directions.
\begin{theorem}
\label{thm:rich-direction-estimate}
For every $E\subset\HH_n(\Fq)$ and every $\lambda>0$,
\begin{equation}\label{eq:rich-direction-bound}
 |\Omega_\lambda(E)|
 \lesssim_n
 q^{r-1}|E|\lambda^{-r}.
\end{equation}
The exponent $r-1$ of $q$ in
\eqref{eq:rich-direction-bound} is sharp.
\end{theorem}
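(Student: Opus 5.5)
The plan is to reduce to a \emph{full-direction} Furstenberg estimate using the symmetry group, and then prove that estimate with the polynomial method with multiplicities. Sharpness is immediate from the point-mass example. Take $E=\{p\}$ and $\lambda=1$. Every refined direction $[w:c]$ with $c=\sigma(z_0,w)$ is realized by a horizontal line through $p=(z_0,t_0)$. This gives $|\PP^{r-1}(\Fq)|\approx q^{r-1}$ rich directions, matching $q^{r-1}|E|\lambda^{-r}$. It also shows the inequality cannot hold with $q^{r-1-\epsilon}$. We may assume $1\le\lambda\le q$ and $|\Omega_\lambda(E)|\ge C q^{r-1}\lambda^{-r}|E|$ for a large constant $C$, and seek a contradiction.

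\emph{Step 1 (covering).} The affine symplectic group $G=\Sp(V)\ltimes\HH_n(\Fq)$ acts on $\HH_n(\Fq)$ by automorphisms composed with left translations. It maps horizontal lines to horizontal lines and induces an action on $\Dn$. I will check that this action is transitive on $\Dn$, or at least has boundedly many orbits, each of size $\approx q^r$. Write $\Omega=\Omega_\lambda(E)$. For each $\vartheta\in\Omega$, fix a horizontal line $L_\vartheta$ with $|L_\vartheta\cap E|\ge\lambda$. Choose $N\approx |\Dn|/|\Omega|\approx q^r/|\Omega|$ independent uniformly random $g_1,\dots,g_N\in G$. By transitivity each $\vartheta'\in\Dn$ lies in a given $g_i\Omega$ with probability $|\Omega|/|\Dn|$. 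Hence the expected size of $\bigcup_i g_i\Omega$ is at least $c|\Dn|$ for an absolute $c>0$. Fix such a choice and set $E'=\bigcup_i g_iE$. Then $|E'|\le N|E|$, and $E'$ contains a $\lambda$-rich horizontal line in every direction of a set $\Omega'\subset\Dn$ with $|\Omega'|\ge c q^r$. It therefore suffices to prove
\[
 |E'|\gtrsim_n q\lambda^{r}
 \qquad\text{whenever } |\Omega_\lambda(E')|\ge c\,q^r .
\]
Indeed, this gives $q\lambda^r\lesssim N|E|\approx q^r|E|/|\Omega|$, which is \eqref{eq:rich-direction-bound}. Only a positive proportion of $\Dn$ needs to be covered, so no logarithmic loss appears.

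\emph{Step 2 (polynomial method).} Suppose $|E'|\le \epsilon q\lambda^r$. Pick parameters $m$ and $d$, and a nonzero polynomial $P$ on $\Fq^{r+1}$ of degree at most $d$ vanishing to order $m$ at every point of $E'$. Such a $P$ exists provided $\binom{d+r+1}{r+1}>|E'|\binom{m+r}{r+1}$. Restrict the Hasse derivatives $\partial^{(\beta)}P$ with $|\beta|=j$ to a rich line $L_\vartheta$. Each has degree at most $d-j$ and at least $\lambda(m-j)$ zeros, so it vanishes identically once $\lambda(m-j)>d$. Consequently the top-degree form $P_d$ vanishes to order about $m-d/\lambda$ at every $\vartheta\in\Omega'\subset\PP^r$, in the style of Dvir--Kopparty--Saraf--Sudan. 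The extra factor $q/\lambda$ must come from horizontality. The naive multiplicity Schwartz--Zippel count against a set of density $c$ in $\PP^r$ only yields $|E'|\gtrsim\lambda^{r+1}$.

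\emph{Step 3 (the Heisenberg gain).} This is the step I expect to be the main obstacle. Fix a spatial direction $[w]$. The $q$ refined directions $[w:c]$ give $q$ horizontal lines that project to $q$ distinct parallel lines $\{\sigma(z,w)=c\}$-translates in $V$. Their union covers a full hyperplane-foliated family, and every point of $V$ lies on exactly one of them. The idea is to exploit this by restricting $P$ to the ruled surfaces (Heisenberg ``vertical planes'') $\{(z_0+sw+u,\ast)\}$ swept out by these lines. One would use the contact structure $dt=\sigma(z,dz)$ to get vanishing of a modified leading form along a whole degree-two family. This should replace the density-$c$ hypothesis in $\PP^r$ by vanishing on $\approx q\cdot q^{r-1}$ incidences with multiplicity, effectively lowering the dimension count by one at a cost of a factor $q$ rather than $\lambda$. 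The optimization would then be $d\approx\lambda m$ with the dimension count $d^{r+1}\gtrsim |E'|m^{r+1}\cdot(\lambda/q)$, giving $|E'|\gtrsim q\lambda^r$. If this direct route fails, the fallback is to run the multiplicity argument in the $r$-dimensional quotient after quotienting out the central variable along each $\lambda$-rich line. That reduces to the affine Furstenberg estimate with $q$ parallel rich lines per direction in $\Fq^r$, fibered over the center.
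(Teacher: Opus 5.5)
\paragraph{Assessment.} There is a genuine gap: the key estimate your reduction relies on is never proved.

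\paragraph{What is correct.} Your reduction is sound and matches how the paper globalizes. The point-mass example gives sharpness. The random affine-symplectic covering works (the action on $\Dn$ is indeed transitive) and produces $E'$ with $|E'|\le N|E|$ and a positive proportion of $\lambda$-rich refined directions. The final rearrangement is correct.

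\paragraph{The gap.} You never prove the estimate $|E'|\gtrsim_n q\lambda^r$ to which you reduce. Step 2, with ambient multiplicity, stalls at $\lambda^{r+1}$, as you note yourself. Step 3 is only a heuristic: nothing justifies the factor $\lambda/q$ in your count $d^{r+1}\gtrsim|E'|m^{r+1}(\lambda/q)$, and the contact-structure and vertical-plane ideas are never turned into an argument. The fallback cannot work as stated either. An argument carried out purely in the quotient $V=\Fq^r$ controls at most $|\pi(E')|\le q^r$. But the target $q\lambda^r$ exceeds $q^r$ once $\lambda>q^{(r-1)/r}$; for $\lambda=q$ one needs $|E'|\gtrsim q^{r+1}$. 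So the central fibers must enter essentially, and you do not say how.

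\paragraph{The missing idea.} Impose multiplicity only in horizontal directions.
\begin{itemize}
\item At $p=(z,t)$, every horizontal line through $p$ has direction vector $(w,\sigma(z,w))$. This lies in the $r$-dimensional subspace $\mathcal H_p=\{(\xi,\sigma(z,\xi)):\xi\in V\}$.
\item It therefore suffices to require that the Hasse--Taylor components of $P$ at $p$ of degree $<m$ vanish identically on $\mathcal H_p$. This still forces $s\mapsto P(\rho+sw,s)$ to vanish to order $m$ at every point of $E'$ on each normalized line $L_w(\rho)$.
\item It costs only $\binom{m+r-1}{r}\approx m^r$ linear conditions per point, instead of $\binom{m+r}{r+1}\approx m^{r+1}$. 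Hence there is a nonzero $P$ with $d\lesssim_n|E'|^{1/(r+1)}m^{r/(r+1)}$.
\item Choosing $m\approx\max\{1,|E'|/\lambda^{r+1}\}$ gives $d<m\lambda$, so $P$ vanishes on every rich line.
\item The coefficient of $s^d$ in $P(\rho+sw,s)$ is $P^{\mathrm{top}}(w,1)$. This nonzero polynomial of degree at most $d$ therefore vanishes at every rich $w$ in the chart $\{[w:1]\}$. That chart contains all but $O(q^{r-1})$ directions.
\item Plain Schwartz--Zippel, with no multiplicity on the leading form, gives $q^r\lesssim dq^{r-1}$, i.e.\ $\max\{|E'|^{1/(r+1)},|E'|/\lambda^r\}\gtrsim_n q$. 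Using $\lambda\le q$, this becomes $|E'|\gtrsim_n q\lambda^r$.
\end{itemize}

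Your guessed count agrees with the true one, $d^{r+1}\gtrsim|E'|m^r$, exactly at $m\approx q/\lambda$. However, the saving comes from the per-point condition count, not from any gain in the Schwartz--Zippel step.
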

The point-mass example already forces the power in
\eqref{eq:rich-direction-bound}: every point of $\HH_n(\Fq)$ lies on
horizontal lines in
$|\PP^{r-1}(\Fq)|\approx q^{r-1}$ distinct refined directions.
A significant feature of \cref{thm:rich-direction-estimate} is that
$\Omega_\lambda(E)$ is completely arbitrary; no algebraic, density, or
separation assumption is imposed on the set of rich directions.

The same mechanism has a weighted form and yields the corresponding sharp level-set estimate for arbitrary functions.

\begin{theorem}
\label{thm:function-level-set}
For every $F:\HH_n(\Fq)\to\C$ and every $\lambda>0$,
\begin{equation}\label{eq:function-level-set}
 \left|
 \left\{
   \vartheta\in\Dn:
   \Mrd F(\vartheta)\geq\lambda
 \right\}
 \right|
 \lesssim_n
 q^{r-1}\lambda^{-r}
 \norm{F}_{\ell^r(\HH_n(\Fq))}^r.
\end{equation}
The exponent $r-1$ of $q$ in
\eqref{eq:function-level-set} is sharp.
\end{theorem}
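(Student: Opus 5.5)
We deduce \cref{thm:function-level-set} from \cref{thm:rich-direction-estimate} by a dyadic pigeonholing (layer-cake) argument. Since $q$ enters the level-set bound only through the factor $q^{r-1}$, we are allowed to lose constants depending on $n$ but not powers of $\log q$. A plain dyadic decomposition of $|F|$ with a union bound would cost a logarithmic factor, so we use summable weights instead.

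\emph{Reduction.} Replace $F$ by $|F|$ and normalize by homogeneity, so that $\lambda=1$. Let $\Omega=\{\vartheta:\Mrd F(\vartheta)\ge1\}$. On a line $L$, the points with $|F(p)|<1/(2q)$ contribute less than $1/2$ in total, since $|L|=q$. For $j\ge1$ let
\[
 E_j=\{p:2^{-j}<|F(p)|\le 2^{-j+1}\}, \qquad E_0=\{p:|F(p)|>1\},
\]
and keep only $j\le \log_2 q+2$. If $\vartheta\in\Omega$, some horizontal line $L$ with $\Dir(L)=\vartheta$ satisfies $\sum_j 2^{-j+1}|L\cap E_j|\ge 1/2$. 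Choose weights $c_j=c\,2^{-j\epsilon}$ with $\epsilon>0$ small (depending on $n$) and $\sum_j c_j\le1/2$. Then for some $j$ we have $|L\cap E_j|\ge c_j2^{j-2}$, which gives
\[
 \Omega\subset E_0\text{-directions}\ \cup\ \bigcup_{j\ge1}\Omega_{\lambda_j}(E_j), \qquad \lambda_j\approx 2^{j(1-\epsilon)}.
\]

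\emph{Summation.} The directions through points of $E_0$ number at most $|E_0|\,|\PP^{r-1}|\lesssim q^{r-1}\sum_{E_0}|F|^r$. For $j\ge1$, \cref{thm:rich-direction-estimate} gives
\[
 |\Omega_{\lambda_j}(E_j)|\lesssim_n q^{r-1}|E_j|\,2^{-jr(1-\epsilon)}.
\]
Compare this with $\sum_{E_j}|F|^r\ge |E_j|\,2^{-jr}$: the estimate exceeds the $\ell^r$ mass by a factor $2^{jr\epsilon}$, which is fatal for large $j$. To fix this, for each $j$ also use the trivial bound $|\Omega_{\lambda}(E_j)|\le q^{r-1}|E_j|\lambda^{-1}$. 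This holds because each point lies in about $q^{r-1}$ direction-incidences, and a rich direction needs $\lambda$ of them. However, $\lambda^{-1}$ is worse than $\lambda^{-r}$, so that does not help either.

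\emph{Main obstacle, and what I would try first.} The real difficulty is this logarithmic loss, since the weak bound at one level does not sum across levels. I would first check whether the rich-direction proof (polynomial method with multiplicities) is inherently weighted. That is, I would run it with a vanishing order $m_p$ at each point proportional to $|F(p)|$, as the paper hints with ``the same mechanism has a weighted form''. Concretely, I would construct a polynomial of degree $D$ vanishing to order $\approx |F(p)|\cdot D$ at each $p$, which needs roughly $\sum_p (|F(p)|D)^{r+1}$ constraints. Its restriction to a rich line $L$ then has $\sum_{p\in L}|F(p)|D\ge D$ zeros counted with multiplicity, so it vanishes on $L$. This forces vanishing at the corresponding points at infinity, that is, on the set of refined directions in $\PP^r$ at the hyperplane at infinity. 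A Schwartz--Zippel bound with multiplicity at infinity then bounds $|\Omega|$ by roughly $D^{r}\cdot(\text{multiplicity})^{-r}$. The probabilistic covering by the affine symplectic group, used for the unweighted theorem, should carry over verbatim, because it only uses the line structure and not the weights. I would therefore redo the optimization in $D$ using the weighted constraint count and aim for exactly $q^{r-1}\norm{F}_r^r$. Sharpness follows from the point mass $F=\one_{\{p\}}$ with $\lambda=1$, which gives $\approx q^{r-1}$ directions.
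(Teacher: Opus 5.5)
Your dyadic route is rightly abandoned. The paper does not derive this theorem from \cref{thm:rich-direction-estimate}; it reruns the polynomial and covering arguments with integer weights, which is the shape of your second plan. The gap is in that plan's constraint count. You ask for ordinary vanishing of order $\approx|F(p)|D$ in all $r+1$ ambient variables. That costs $\approx(|F(p)|D)^{r+1}$ conditions per point against a space of dimension $\approx D^{r+1}$. After normalizing $\lambda=1$, solvability therefore requires $\sum_p|F(p)|^{r+1}\lesssim1$, so the norm that appears is $\ell^{r+1}$, not $\ell^r$, and no choice of $D$ changes this. More fundamentally, a polynomial step that never uses horizontality is valid for arbitrary lines in $\Fq^{r+1}$. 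There a single point is $1$-rich in $\approx q^r$ directions, so such an argument can at best reproduce ordinary Kakeya-type bounds. Even combined with the covering lemma this gives something like $|\Omega|\lesssim q^r|E|\lambda^{-(r+1)}$, a full factor $q$ too weak for the point mass. Also, Schwartz--Zippel does not give a bound of the form $D^r k^{-r}$. What the argument actually uses is $|\widetilde\Omega_\lambda|\le(\deg\Gamma_P)\,q^{r-1}$, and that is exactly where the factor $q^{r-1}$ comes from.

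The missing idea is horizontal multiplicity (\cref{def:horizontal-multiplicity}).
\begin{itemize}
\item At $p=(z,t)$ you only require the Hasse--Taylor components $P_p^{(j)}$, $j<m\nu(p)$, to vanish on the $r$-dimensional subspace $\mathcal H_p=\{(\xi,\sigma(z,\xi)):\xi\in V\}$.
\item This suffices because every horizontal line through $p$ has its direction in $\mathcal H_p$. For $L_w(\rho)$ the direction is $(w,1)$, since $\sigma(\rho+s_0w,w)=1$. So the restriction to each such line still vanishes to order $m\nu(p)$.
\item The cost is only $\binom{m\nu(p)+r-1}{r}=O_n((m\nu(p))^r)$ conditions per point.
\item Balancing $m^r\mathcal E_r(\nu)$ conditions against $D^{r+1}$ dimensions gives $D\lesssim_n\max\{\mathcal E_r(\nu)^{1/(r+1)},\mathcal E_r(\nu)\lambda^{-r}\}$ with $D<m\lambda$. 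Hence $|\widetilde\Omega_\lambda(\nu)|\le Dq^{r-1}$, and the covering step removes the first term using $\lambda\le q$.
\end{itemize}

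Your sketch leaves two further points open.
\begin{itemize}
\item Multiplicities must be integers. The paper handles this by setting $\nu_\lambda=\lfloor 2qF/\lambda\rfloor$ and working at level $q$; rounding costs at most $q$ on a line of $q$ points.
\item The covering does not carry over verbatim. Superimposing the copies by a plain sum $\sum_j\nu\circ g_j^{-1}$ can inflate $\mathcal E_r$ by a factor $J^r$ rather than $J$. The paper instead uses $\lceil(\sum_j\nu(g_j^{-1}p)^r)^{1/r}\rceil$, which still dominates each copy and keeps $\mathcal E_r(\nu_{\mathrm{cov}})\le 2^rJ\,\mathcal E_r(\nu)$.
\end{itemize}
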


\subsection{Geometric consequences}

\Cref{thm:rich-direction-estimate}
immediately gives the following finite field Furstenberg type
estimate.
\begin{theorem}
\label{thm:furstenberg-application}
Let $\varnothing\neq\Omega\subset\Dn$, let
$E\subset\HH_n(\Fq)$, and let $1\leq\lambda\leq q$ be an integer.
Suppose that for every $\vartheta\in\Omega$ there exists a horizontal
line $L_\vartheta$ satisfying
\[
 \Dir(L_\vartheta)=\vartheta,
 \qquad
 |E\cap L_\vartheta|\geq\lambda.
\]
Then
\begin{equation}\label{eq:furstenberg-endpoint}
 |E|
 \gtrsim_n
 \frac{\lambda^r|\Omega|}{q^{r-1}}.
\end{equation}
\end{theorem}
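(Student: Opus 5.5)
The plan is to deduce \cref{thm:furstenberg-application} directly from \cref{thm:rich-direction-estimate}; no new geometric input should be needed. The only work is to check that every prescribed direction is $\lambda$-rich for $E$, and then rearrange the resulting inequality.

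\textbf{Step 1: $\Omega\subset\Omega_\lambda(E)$.} Fix $\vartheta\in\Omega$. By hypothesis there is a horizontal line $L_\vartheta$ with $\Dir(L_\vartheta)=\vartheta$ and $|E\cap L_\vartheta|\geq\lambda$. This line is admissible in the maximum defining $\Mrd\one_E(\vartheta)$ in \eqref{eq:maximal-operator}. Hence
\[
 \Mrd\one_E(\vartheta)\geq\sum_{p\in L_\vartheta}\one_E(p)=|E\cap L_\vartheta|\geq\lambda ,
\]
which says exactly that $\vartheta\in\Omega_\lambda(E)$.

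\textbf{Step 2: apply the rich-direction bound.} Since $\lambda\geq1>0$, \cref{thm:rich-direction-estimate} applies and gives
\[
 |\Omega|\leq|\Omega_\lambda(E)|\leq C_n\,q^{r-1}|E|\lambda^{-r}.
\]
Multiplying through by $\lambda^r q^{-(r-1)}/C_n$ yields $|E|\geq C_n^{-1}\lambda^r|\Omega|q^{-(r-1)}$, which is \eqref{eq:furstenberg-endpoint} with implied constant $C_n^{-1}$. This constant depends only on $n$, as required.

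There is no real obstacle here; all the difficulty sits inside \cref{thm:rich-direction-estimate}. Two remarks are worth recording.
\begin{itemize}
 \item The hypotheses $\Omega\neq\varnothing$, $\lambda$ an integer, and $\lambda\leq q$ are not used in the deduction. They only guarantee that the hypothesis can be satisfied: a horizontal line has exactly $q$ points, so $\lambda\leq q$ is forced whenever $\Omega\neq\varnothing$.
 \item The same argument run with \cref{thm:function-level-set} and $F=\one_E$ gives the identical conclusion, since $\norm{\one_E}_{\ell^r}^r=|E|$.
\end{itemize}
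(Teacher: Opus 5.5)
Your proposal is correct and takes the same route as the paper, which deduces \cref{thm:furstenberg-application} directly from \cref{thm:rich-direction-estimate}. As you do, it uses the inclusion $\Omega\subset\Omega_\lambda(E)$ and then rearranges the bound $|\Omega_\lambda(E)|\lesssim_n q^{r-1}|E|\lambda^{-r}$.
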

The strength of \cref{thm:furstenberg-application} lies in its
uniformity in both $\Omega$ and $\lambda$.  For example, if
$|\Omega|\geq\delta|\Dn|$ and each selected line contains at least
$\eta q$ points of $E$, then
\begin{equation}\label{eq:robust-density}
 |E|
 \gtrsim_n
 \delta\,\eta^r q^{r+1}.
\end{equation}
Thus, a positive proportion of rich refined directions forces a
quantitatively positive ambient density. Theorem \ref{thm:furstenberg-application} is most interesting when $1<\lambda<q$. When $\lambda=q$ in \eqref{eq:furstenberg-endpoint}, it gives the particularly simple refined-direction Kakeya estimate
\begin{equation}\label{eq:partial-direction-kakeya}
 |E|\gtrsim_n q|\Omega|
\end{equation}
whenever $E$ contains a complete horizontal line in every direction
$\vartheta\in\Omega$.  

\begin{corollary}
    If $\mathcal L$ is a family of
horizontal lines with pairwise distinct refined directions, then
\begin{equation}\label{eq:union-distinct-directions}
 \left|\bigcup_{L\in\mathcal L}L\right|
 \approx_n q|\mathcal L|.
\end{equation}
\end{corollary}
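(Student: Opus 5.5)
The two-sided estimate \eqref{eq:union-distinct-directions} splits into a trivial upper bound and a lower bound that follows from \cref{thm:furstenberg-application} with $\lambda=q$. Set $E:=\bigcup_{L\in\mathcal L}L$.

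For the upper bound, each horizontal line is an affine line in $V\times\Fq$ and so has exactly $q$ points. Hence
\[
 |E|\leq\sum_{L\in\mathcal L}|L|=q|\mathcal L|,
\]
with constant $1$.

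For the lower bound, assume $\mathcal L\neq\varnothing$ (otherwise both sides vanish). Let
\[
 \Omega:=\{\Dir(L):L\in\mathcal L\}\subset\Dn .
\]
Because the refined directions are pairwise distinct, $|\Omega|=|\mathcal L|$. For each $\vartheta\in\Omega$, take $L_\vartheta$ to be the unique member of $\mathcal L$ with $\Dir(L_\vartheta)=\vartheta$. It satisfies $|E\cap L_\vartheta|=|L_\vartheta|=q$, so the hypotheses of \cref{thm:furstenberg-application} hold with the integer $\lambda=q$. Then \eqref{eq:furstenberg-endpoint}, in the form \eqref{eq:partial-direction-kakeya}, gives
\[
 |E|\gtrsim_n \frac{q^r|\Omega|}{q^{r-1}}=q|\mathcal L| .
\]
Combining the two bounds gives $|E|\approx_n q|\mathcal L|$.

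None of these steps is difficult. The only point to check is that distinctness of the refined directions makes $L\mapsto\Dir(L)$ injective on $\mathcal L$, so that $|\Omega|=|\mathcal L|$. All the substance lies in \cref{thm:rich-direction-estimate}, through \cref{thm:furstenberg-application}, which we take as given.
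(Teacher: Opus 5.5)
Your proof is correct and follows the paper's own argument. The upper bound is the trivial union bound $|E|\leq q|\mathcal L|$, and the lower bound is the case $\lambda=q$ of \cref{thm:furstenberg-application}, namely \eqref{eq:partial-direction-kakeya}, with distinctness of the refined directions giving $|\Omega|=|\mathcal L|$.
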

The upper bound of $\left|\bigcup_{L\in\mathcal L}L\right|$ is trivial, whereas the lower bound is
\eqref{eq:partial-direction-kakeya}. The lower bound guaranties that no matter how these lines are arranged, as long as they point in pairwise distinct refined directions, they cannot overlap so heavily that their union collapses in size.
\subsection{The critical exponent diagram}

We next determine the optimal power of $q$ throughout the full
mixed-norm range and prove that this pure power is attained
away from a single critical diagonal point.

\begin{definition}\label{def:critical-exponent}
For $1\leq u,v\leq\infty$, let $A_n^{\mathrm{rd}}(u,v)$ be the infimum
of all $a\geq0$ for which there exists a constant
$C=C(n,u,v,a)$ such that
\begin{equation}\label{eq:def-critical-exponent}
 \norm{\Mrd F}_{\ell^v(\Dn)}
 \leq
 Cq^a
 \norm{F}_{\ell^u(\HH_n(\Fq))}
\end{equation}
for every odd prime power $q$ and every
$F:\HH_n(\Fq)\to\C$.
\end{definition}

The use of the infimum in this definition is deliberate. The theorem below shows that it is attained for every pair $(u,v)\neq(r,r)$.  At $(r,r)$, the exact critical power is known, but whether the corresponding pure-power estimate, i.e., without the log factor, holds remains open.  This is the precise distinction from the case $n=1$, where the Fourier estimate of
\begin{equation}\label{eq:intro-n1-estimate}
 \norm{\mathcal M^{\mathrm{rd}}_{\HH_1}F}_{\ell^2(\mathcal D_1)}
 \lesssim
 q^{1/2}\norm{F}_{\ell^2(\HH_1(\Fq))}
\end{equation} makes the critical value a minimum throughout the mixed-norm range.

Write
\[
 \alpha=\frac1u,
 \qquad
 \beta=\frac1v,
\]
with the conventions $1/\infty=0$ and $1/0=\infty$, and set
\[
 \widetilde A_n^{\mathrm{rd}}(\alpha,\beta)
 :=
 A_n^{\mathrm{rd}}(1/\alpha,1/\beta).
\]
Define
\begin{equation}\label{eq:four-functionals}
 \begin{aligned}
 \phi_1(\alpha,\beta)&=(r-1)\beta,
 &
 \phi_2(\alpha,\beta)&=1-\alpha,\\
 \phi_3(\alpha,\beta)&=r\beta-\alpha,
 &
 \phi_4(\alpha,\beta)&=1+r\beta-(r+1)\alpha,
 \end{aligned}
\end{equation}
and
\begin{equation}\label{eq:Phi}
 \Phi_n(\alpha,\beta)
 :=
 \max\{
   \phi_1(\alpha,\beta),
   \phi_2(\alpha,\beta),
   \phi_3(\alpha,\beta),
   \phi_4(\alpha,\beta)
 \}.
\end{equation}

\begin{theorem}
\label{thm:main}
For every $(\alpha,\beta)\in[0,1]^2$,
\begin{equation}\label{eq:main-phase}
 \widetilde A_n^{\mathrm{rd}}(\alpha,\beta)
 =
 \Phi_n(\alpha,\beta).
\end{equation}
Equivalently, for every $1\leq u,v\leq\infty$,
\begin{equation}\label{eq:main-uv}
 A_n^{\mathrm{rd}}(u,v)
 =
 \max\left\{
 \frac{2n-1}{v},\
 1-\frac1u,\
 \frac{2n}{v}-\frac1u,\
 1+\frac{2n}{v}-\frac{2n+1}{u}
 \right\}.
\end{equation}
More precisely, for every fixed
$(\alpha,\beta)\in[0,1]^2\setminus\{(1/r,1/r)\}$,
\begin{equation}\label{eq:main-quantitative}
 \norm{\Mrd F}_{\ell^{1/\beta}(\Dn)}
 \lesssim_{n,\alpha,\beta}
 q^{\Phi_n(\alpha,\beta)}
 \norm{F}_{\ell^{1/\alpha}(\HH_n(\Fq))}.
\end{equation}
At the critical diagonal,
\begin{equation}\label{eq:main-quantitative-diagonal}
 \norm{\Mrd F}_{\ell^r(\Dn)}
 \lesssim_n
 q^{(r-1)/r}(1+\log q)^{1/r}
 \norm{F}_{\ell^r(\HH_n(\Fq))}.
\end{equation}
No estimate with a smaller power of $q$ can hold uniformly in $q$.
Consequently, the infimum defining $A_n^{\mathrm{rd}}(u,v)$ is attained for every $(u,v)\neq(r,r)$.
\end{theorem}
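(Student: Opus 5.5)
We prove the upper bounds and the lower bounds separately. For the upper bounds, we use interpolation from a finite set of endpoint estimates. Since $\Phi_n$ is a maximum of affine functionals, it is convex on $[0,1]^2$. The estimate \eqref{eq:def-critical-exponent} is also log-convex in $(\alpha,\beta)$ by Riesz--Thorin. Since $\Mrd$ is sublinear but dominated by positive linearizations, we fix the maximizing line for each $\vartheta$ and interpolate the resulting linear operator. So it suffices to prove \eqref{eq:main-quantitative} at the vertices of the regions where each $\phi_i$ is the maximum.

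The candidate vertices are:
\begin{itemize}
\item $(0,0)$, giving $\ell^\infty\to\ell^\infty$ with power $q^1$, which is trivial.
\item $(1,1)$, giving $\ell^1\to\ell^1$ with power $q^{r-1}$. This follows because each point lies on $\approx q^{r-1}$ lines, at most one per refined direction.
\item $(1,0)$, giving $\ell^1\to\ell^\infty$ with power $q^0$, which is trivial.
\item $(\alpha,0)$ with $\phi_2$ and $\phi_4$ balanced, and the points on $\beta=1$, which are handled similarly by Hölder.
\item The critical point $(1/r,1/r)$, where all four functionals equal $(r-1)/r$.
\end{itemize}
One checks that off the diagonal point, the maximum of the $\phi_i$ is attained by interpolating a pure-power restricted weak-type estimate at $(1/r,1/r)$ with the trivial corners.

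Next we pass from level sets to strong estimates. \Cref{thm:function-level-set} gives the weak-type bound $\ell^r\to\ell^{r,\infty}$ with constant $q^{(r-1)/r}$. Marcinkiewicz interpolation between this weak-type point and the trivial strong-type corners yields strong type at every point of the open segments through $(1/r,1/r)$, excluding $(1/r,1/r)$ itself, with constants depending on $(\alpha,\beta)$. This gives \eqref{eq:main-quantitative}. On the diagonal itself, we dyadically decompose $F$ into level pieces $F_j$ with $|F_j|\approx 2^j$. Only $O(\log q)$ pieces matter, because pieces of size below $q^{-C}\|F\|_\infty$ contribute negligibly, as $\Mrd$ sums at most $q$ points. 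We then combine \cref{thm:function-level-set}, applied to each $F_j$, with a layer-cake summation over $\lambda$. This loses $(1+\log q)^{1/r}$ and gives \eqref{eq:main-quantitative-diagonal}.

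For the lower bounds we use four test examples, one for each functional:
\begin{itemize}
\item $\phi_1$: a point mass $F=\one_{\{p\}}$, for which $\Mrd F=1$ on $\approx q^{r-1}$ directions.
\item $\phi_2$: a single full horizontal line $F=\one_L$, for which $\Mrd F\ge q$ at one direction.
\item $\phi_3$: $F=\one_{\HH_n}$, for which $\Mrd F=q$ on all $\approx q^r$ directions, against $\norm{F}_u=q^{(r+1)/u}$. This gives $1+r\beta-(r+1)\alpha=\phi_4$.
\item The remaining functional: a union of lines through a point in all directions of a fixed spatial direction class, or a horizontal hyperplane-type set (e.g.\ $\{t=0\}$-like Legendrian slabs), chosen so that $\Mrd F\approx q$ on $\approx q^{r}$ directions with $|E|\approx q^{r}$ or so. We would tune this to produce $r\beta-\alpha$. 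Alternatively, $F=\one_{L}$ plus the point-mass scaling might give it directly.
\end{itemize}
Sharpness of the power at $(r,r)$ follows from the point mass.

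The main obstacle is twofold. First, the weak-to-strong passage must be made compatible with the exact convex hull of the exponents, which requires verifying that every vertex of the piecewise-linear $\Phi_n$ except $(1/r,1/r)$ is reachable from strong-type endpoints. Second, we must find an extremal example for $\phi_3$.
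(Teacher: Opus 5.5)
Your upper-bound scaffolding (uniform bounds for selector operators, Riesz--Thorin, Marcinkiewicz off the weak-type point $P=(1/r,1/r)$) is broadly the paper's. The lower bound, however, has a genuine gap: you have no example forcing $\phi_3=r\beta-\alpha$, and the candidates you sketch cannot work. A set with $|E|\approx q^r$ and $\Mrd\one_E\approx q$ on $\approx q^r$ directions does not exist, because \cref{thm:rich-direction-estimate} with $\lambda=q$ would force $|E|\gtrsim_n q^{r+1}$. Even formally, such a set would give the quotient $q^{1+r\beta-r\alpha}$, which contradicts the diagonal estimate at $P$.

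The missing idea is to work at richness $\lambda=1$ with a very small set. The point $(z,0)$ lies on a horizontal line of refined direction $[w:c]$ iff $\sigma(z,w)=c$, which is an affine hyperplane in $V$. The union $\mathcal B$ of the $r$ coordinate axes has $|\mathcal B|\le rq$ and meets every affine hyperplane. Hence $E=\mathcal B\times\{0\}$ satisfies $\Mrd\one_E\ge 1$ on all of $\Dn$, and the quotient is at least $|\Dn|^{\beta}|E|^{-\alpha}\gtrsim_n q^{r\beta-\alpha}$.

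This omission is not cosmetic. In the interior of the triangle $\mathcal C_3=\conv\{(1/r,1/r),(1/r,1),(1,1)\}$, $\phi_3$ strictly exceeds $\phi_1,\phi_2,\phi_4$. For example, with $n=2$ and $(\alpha,\beta)=(1/2,3/4)$ one has $\phi_3=5/2$ against $\max\{\phi_1,\phi_2,\phi_4\}=9/4$. So your three examples do not give \eqref{eq:main-phase}.

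There is a second, smaller gap above the diagonal. The relevant extra vertex is $R=(1/r,1)$, where $\phi_3=\phi_4$; your ``$(\alpha,0)$ with $\phi_2=\phi_4$'' is just $(0,0)$. Since $R$ lies on the edge $\beta=1$, the only way to write it as a convex combination of $P$ and the corners uses $(0,1)$ and $(1,1)$. That gives $q^{r+1-2/r}$ rather than $q^{r-1/r}$.

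Nor does Marcinkiewicz from $P$ toward the ``trivial corner'' $(0,1)$ give strong type. Along that segment $u>v$, so real interpolation lands in $\ell^{v,u}$, which embeds in $\ell^v$ only with a logarithmic loss. What you need is the paper's log-free use of the weak-type bound at fixed input $\ell^r$. Ordering the values gives $a_j\lesssim_n q^{(r-1)/r}j^{-1/r}\norm{F}_{\ell^r}$. Since $\sum_{j\le|\Dn|}j^{-v/r}\lesssim|\Dn|^{1-v/r}$ for $v<r$, this yields the pure power on the whole segment from $P$ to $R$, excluding $P$. H\"older applied to the strong diagonal bound would instead carry $(\log q)^{1/r}$. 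With $O=(0,0)$ and $C=(0,1)$, the cells $\mathcal C_3$ and $\mathcal C_4=\conv\{P,O,R\}\cup\conv\{O,C,R\}$ are then filled by Riesz--Thorin on triangles truncated near $P$.

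Finally, on the diagonal, applying \cref{thm:function-level-set} to each dyadic piece $F_j$ and recombining by the triangle inequality loses a full $\log q$. Instead, discard the part of $F$ below $q^{-C}\norm{F}_{\ell^\infty}$ and apply the level-set bound to the remainder as a whole at its $O(\log q)$ dyadic output levels. This is equivalent to the paper's harmonic-sum rearrangement.
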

The four affine functionals in \eqref{eq:main-uv} arise from four
different geometric obstructions.  They are forced, respectively, by a point mass, the indicator of one horizontal line, a set of $O_n(q)$ points in the central slice $\{t=0\}$ whose projection meets every affine hyperplane in $V$, and the constant function.  These examples give the matching lower bounds throughout the exponent diagram.

\subsection{Comparison with the result for the first Heisenberg group}

We now make the relation with~\cite{PPTX26} more precise.

As sets, $\HH_1(\Fq)=\Fq^3$, and the refined directions are precisely
the nonvertical ambient projective directions.  The admissible lines,
however, form only a proper subfamily of all affine lines.  A line
through $(x_0,y_0,t_0)$ in direction
$(\xi_1,\xi_2,\xi_3)$ is horizontal exactly when
\[
 \xi_3=x_0\xi_2-y_0\xi_1.
\]
Consequently, even when $n=1$, the refined-direction estimate is not the ordinary three-dimensional finite field Kakeya estimate.

The methodological source of the distinction is clear. When $n=1$, the critical diagonal is $\ell^2\to\ell^2$, and the Fourier argument of~\cite{PPTX26} gives the optimal strong estimate directly.  For $n\geq2$, the critical diagonal becomes $\ell^{2n}\to\ell^{2n}$.  Every point lies on $\approx q^{2n-1}$ horizontal lines with distinct refined directions and therefore forces the power $q^{(2n-1)/(2n)}$.  A direct continuation of the $n=1$ argument remains $\ell^2$-based and yields only an $\ell^2\to\ell^{2n}$ estimate with the larger factor $q^{n/2}$.

The present paper supplies the genuinely higher-moment mechanism needed in this setting. Horizontal polynomial multiplicity and the global covering argument give the sharp level-set estimate at the critical $\ell^{2n}$ scaling. Rearrangement, real interpolation, and Riesz--Thorin interpolation of the selector operators then yield the optimal pure power of $q$ at every mixed-norm pair away from the critical diagonal. Only the estimate at the critical diagonal itself retains the logarithmic factor.

Thus, the passage from $\HH_1(\Fq)$ to $\HH_n(\Fq)$ is not merely a
dimensional generalization. It marks a transition from a problem resolved by Fourier analysis to one requiring a new higher-moment polynomial method.

\subsection{Outline of the proof}

We conclude the introduction by indicating the mechanisms behind
\cref{thm:rich-direction-estimate}.

The polynomial part of the argument is local in the refined-direction
space. Consider the affine chart
\[
 \mathcal D_n^{\mathrm{aff}}
 =
 \{[w:1]:w\in V\setminus\{0\}\}.
\]
A horizontal line with refined direction $[w:1]$ has a unique
representation
\[
 L_w(\rho)
 =
 \{(\rho+sw,s):s\in\Fq\},
 \qquad
 \sigma(\rho,w)=1;
\]
see \cref{lem:normalized-horizontal-lines}. At a point
$p=(z,t)\in\HH_n(\Fq)$, the space of horizontal direction vectors is
\[
 \mathcal H_p
 =
 \{(\xi,\sigma(z,\xi)):\xi\in V\},
\]
which has dimension $r$.

We impose multiplicity only along $\mathcal H_p$. More precisely,
\cref{def:horizontal-multiplicity} requires the Hasse--Taylor
components of degree less than $m$ to vanish after restriction to
$\mathcal H_p$.  By \cref{lem:multiplicity-condition-count}, this
imposes only
\[
 \binom{m+r-1}{r}
 =
 O_n(m^r)
\]
linear conditions at each point, instead of the $O_n(m^{r+1})$ conditions required by ordinary ambient multiplicity.
By comparing this condition count to the dimension of the polynomial space, the interpolation lemma (\cref{lem:horizontal-interpolation}) guarantees the existence of a nonzero polynomial $P$ satisfying these constraints, with a bounded degree
\[
 \deg P
 \lesssim_n
 |E|^{1/(r+1)}m^{r/(r+1)}.
\]

If $L_w(\rho)$ contains at least $\lambda$ points of $E$, then the
restriction
\[
 s\longmapsto P(\rho+sw,s)
\]
has at least $\lambda$ distinct zeros, each of multiplicity at least
$m$ (\cref{lem:line-multiplicity}). Choosing $m$ so that
$m\lambda>\deg P$ forces this restriction to vanish identically. Its
leading coefficient is the value at $(w,1)$ of the top homogeneous
part of $P$. The Schwartz-Zippel lemma then gives
\begin{equation}\label{eq:intro-polynomial-estimate}
 |\widetilde\Omega_\lambda(E)|
 \lesssim_n
 q^{r-1}
 \max\left\{
 |E|^{1/(r+1)},
 \frac{|E|}{\lambda^r}
 \right\},
\end{equation}
which is \cref{prop:polynomial-rich-line}.  In particular,
\cref{cor:positive-proportion-directions} yields
\begin{equation}\label{eq:intro-positive-proportion}
 |\widetilde\Omega_\lambda(E)|
 \gtrsim q^r
 \qquad\Longrightarrow\qquad
 |E|\gtrsim_n q\lambda^r.
\end{equation}

The second part of the argument globalizes
\eqref{eq:intro-positive-proportion}.  The affine symplectic
transformations
\[
 g_{b,\tau,S}(z,t)
 :=
 \bigl(
 b+Sz,\,
 \tau+t+\sigma(b,Sz)
 \bigr),
 \qquad
 b\in V,\quad
 \tau\in\Fq,\quad
 S\in\Sp(V,\sigma),
\]
preserve horizontal lines and act transitively on $\Dn$ (\cref{lem:symmetry-action}). Given a nonempty set of directions
$\Omega\subset\Dn$, the probabilistic covering lemma
\cref{lem:covering-direction-set} produces $O(q^r/|\Omega|)$ such transformations whose images of $\Omega$ cover
a positive proportion of the affine chart.

Apply this with $\Omega=\Omega_\lambda(E)$.  If
$\Omega_\lambda(E)=\varnothing$, then \eqref{eq:rich-direction-bound} is
immediate.  Otherwise, choose $g_1,\ldots,g_J$ as in
\cref{lem:covering-direction-set}, where
\[
 J\lesssim_n\frac{q^r}{|\Omega_\lambda(E)|},
 \qquad
 E_{\mathrm{cov}}:=\bigcup_{j=1}^Jg_j(E).
\]
Covariance gives
\[
 |\widetilde\Omega_\lambda(E_{\mathrm{cov}})|
 \geq \frac12q^r.
\]
Since each $g_j$ is a bijection, the union bound also gives
\[
 |E_{\mathrm{cov}}|
 \leq J|E|
 \lesssim_n
 \frac{q^r|E|}{|\Omega_\lambda(E)|}.
\]
It follows from \eqref{eq:intro-positive-proportion} that
\[
 q\lambda^r
 \lesssim_n
 |E_{\mathrm{cov}}|
 \lesssim_n
 \frac{q^r|E|}{|\Omega_\lambda(E)|}.
\]
Rearranging gives
\[
 |\Omega_\lambda(E)|
 \lesssim_n
 q^{r-1}|E|\lambda^{-r},
\]
which is precisely \eqref{eq:rich-direction-bound}.
A weighted version of the polynomial construction,
\cref{prop:weighted-affine-rich}, together with the same covering
argument gives the integer-valued level-set estimate
\cref{prop:integer-weighted-level} and hence
\cref{thm:function-level-set}. Ordering the values of the
maximal function shows that only the diagonal $\ell^r$ sum is harmonic:
it gives \cref{thm:diagonal-estimate} with the factor
$(1+\log q)^{1/r}$ and gives pure-power estimates at every other point on the critical-input line. Marcinkiewicz interpolation from the
level-set estimate supplies strong estimates on the open edges issuing from the critical point. Riesz--Thorin interpolation for the selector operators then extends these bounds across the four convex cells that partition the mixed-norm parameter space $[0, 1]^2$. Together with the four sharpness examples in \cref{sec:lower-bounds}, this proves \cref{thm:main}.

\subsection{Organization of the paper}

\Cref{sec:preliminaries} contains the elementary geometry of horizontal lines, the selector formulation, and the complex and real
interpolation principles used below.
The four examples giving the necessary lower bounds for the critical exponent diagram are collected in \cref{sec:lower-bounds}.
The horizontal polynomial method is developed in \cref{sec:polynomial-estimate}, and the affine-symplectic covering
argument in \cref{sec:covering-directions}. These two ingredients are combined in \cref{sec:global-rich-direction} to prove
\cref{thm:rich-direction-estimate}, and their weighted counterparts are developed in \cref{sec:function-level-set} to prove
\cref{thm:function-level-set}. The diagonal $\ell^r$ estimate and the complete critical exponent diagram are proved in
\cref{sec:diagonal,sec:mixed-norm-estimates}.

\section{Preliminaries}
\label{sec:preliminaries}
This section develops the preliminary tools used throughout
the paper.
\begin{lemma}\label{lem:line-geometry}
Every point of $\HH_n(\Fq)$ lies on exactly $|\PP^{r-1}(\Fq)|$ horizontal lines, and these lines have pairwise distinct refined directions. Two distinct points lie on at most one common horizontal line.
\end{lemma}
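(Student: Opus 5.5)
The plan is to work directly from the parametrization \eqref{eq:horizontal-line} and the observation, made after \eqref{eq:refined-direction}, that a horizontal line is an ordinary affine line in $V\times\Fq$ with direction vector $(w,\sigma(z_0,w))$. Fix $p=(z_0,t_0)$. Every horizontal line through $p$ can be written with base point $p$. This works because $\Dir$ and the line itself do not depend on the chosen base point: if $p=(z_1+s_1w,t_1+s_1\sigma(z_1,w))\in L_{(z_1,t_1),w}$, then $\sigma(z_0,w)=\sigma(z_1,w)$, since $\sigma(w,w)=0$. Re-parametrizing by $s\mapsto s-s_1$ then shows $L_{(z_1,t_1),w}=L_{p,w}$.

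Hence the horizontal lines through $p$ are exactly the $L_{p,w}$ with $w\neq0$, and $L_{p,w}$ depends only on $[w]$. We must check that distinct $[w]$ give distinct lines. The projection of $L_{p,w}$ to $V$ is the affine line $z_0+\Fq w$, which determines $[w]$. So the map $[w]\mapsto L_{p,w}$ is a bijection from $\PP^{r-1}(\Fq)$ onto the set of horizontal lines through $p$. This gives exactly $|\PP^{r-1}(\Fq)|$ lines.

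Their refined directions $[w:\sigma(z_0,w)]$ are pairwise distinct, because the first $r$ coordinates recover $[w]$. More precisely, $[w:c]=[w':c']$ forces $w'=\mu w$ with $\mu\neq0$, so $[w]=[w']$.

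For the last claim, the plan is to use that horizontal lines are affine lines in $V\times\Fq\cong\Fq^{r+1}$ with $q$ points each, since $w\neq0$. Two distinct affine lines in an affine space meet in at most one point. Therefore two distinct points lie on at most one common horizontal line.

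No step is a real obstacle. The only point needing care is the base-point independence, namely the identity $\sigma(z_0+sw,w)=\sigma(z_0,w)$, which uses that $\sigma$ is alternating.
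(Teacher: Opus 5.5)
Your proof is correct. For the first assertion you follow the same route as the paper: each spatial direction $[w]$ gives exactly one horizontal line through $p$, and the first $r$ coordinates of the refined direction recover $[w]$. You also supply the base-point independence $\sigma(z_0+sw,w)=\sigma(z_0,w)$, which the paper's one-line argument takes for granted.

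For the second assertion your route differs. You use only that horizontal lines are ordinary affine lines in $V\times\Fq\cong\Fq^{r+1}$, so two of them sharing two distinct points must coincide. The paper instead reduces to the first assertion. A common horizontal line through distinct points $p=(z,t)$ and $p'=(z',t')$ forces $z\neq z'$ and spatial direction $[z'-z]$, and the horizontal line through $p$ with that spatial direction is unique.

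Your argument is shorter and works for any subfamily of affine lines. The paper's argument additionally yields the explicit criterion $t'-t=\sigma(z,z'-z)$ for when two points lie on a common horizontal line. Your approach does not produce that criterion, but the lemma does not require it.
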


\begin{proof}
For every spatial direction $[w]\in\PP^{r-1}(\Fq)$, there is exactly one horizontal line through a fixed point $p$ with spatial direction $[w]$. Its refined direction is determined by $p$ and $[w]$, and two such refined directions can agree only if the spatial directions agree.  This proves the first assertion.

Let $p=(z,t)$ and $p'=(z',t')$ be distinct points on a common
horizontal line. Since the spatial parametrization in
\eqref{eq:horizontal-line} is injective, one has $z'\neq z$, and the
spatial direction of any common line must be
\[
 [w]=[z'-z].
\]
By the first part, $p$ and this spatial direction determine a unique
horizontal line. More explicitly, such a line exists precisely when
\[
 t'-t=\sigma(z,z'-z),
\]
in which case it is $L_{p,z'-z}$. Hence two distinct points lie on at most one common horizontal line. This proves the second assertion.

\end{proof}

\begin{definition}
A selector is a family
\[
 \mathcal S=(L_\vartheta)_{\vartheta\in\Dn},
 \qquad
 \Dir(L_\vartheta)=\vartheta.
\]
Its associated linear operator is
\begin{equation}\label{eq:selector}
 (T_{\mathcal S}f)(\vartheta)
 :=
 \sum_{p\in L_\vartheta}f(p).
\end{equation}
\end{definition}

For every selector,
\begin{equation}\label{eq:selector-dominated}
 |T_{\mathcal S}f|\leq\Mrd(|f|).
\end{equation}
Conversely, for every $f\geq0$, one may choose a maximizing selector
$\mathcal S_f$ such that
\begin{equation}\label{eq:max-selector}
 \Mrd f=T_{\mathcal S_f}f.
\end{equation}
Thus, uniform estimates for all selector operators imply the corresponding maximal estimates.

We use the following standard interpolation principles. Suppose first that $T$ is linear and
\[
 \norm{Tf}_{\ell^{v_j}}
 \leq C_jq^{a_j}\norm{f}_{\ell^{u_j}},
 \qquad j=0,1.
\]
If
\[
 \frac1{u_\theta}=\frac{1-\theta}{u_0}+\frac\theta{u_1},
 \qquad
 \frac1{v_\theta}=\frac{1-\theta}{v_0}+\frac\theta{v_1},
\]
then Riesz--Thorin interpolation gives
\[
 \norm{Tf}_{\ell^{v_\theta}}
 \leq C_0^{1-\theta}C_1^\theta
 q^{(1-\theta)a_0+\theta a_1}
 \norm{f}_{\ell^{u_\theta}}.
\]
We apply this principle to the linear operators $T_{\mathcal S}$.  All estimates below are uniform in $\mathcal S$, so a maximizing selector may be chosen after interpolation. Repeated two-point interpolation realizes any finite convex combination of exponent pairs.

We shall also use the following consequence of real interpolation.
Let $\mathcal X$ and $\mathcal Y$ be finite sets equipped with counting
measure, and let
\[
 T:\mathbb C^{\mathcal X}\longrightarrow\mathbb C^{\mathcal Y}
\]
be linear. Suppose that
\begin{equation}\label{eq:distributional-interpolation-endpoint}
 \bigl|\{y\in\mathcal Y:|Tf(y)|\geq\lambda\}\bigr|
 \leq
 \left(
   \frac{M_0\norm{f}_{\ell^{p_0}(\mathcal X)}}{\lambda}
 \right)^{q_0}
 \qquad(\lambda>0)
\end{equation}
and
\begin{equation}\label{eq:strong-interpolation-endpoint}
 \norm{Tf}_{\ell^{q_1}(\mathcal Y)}
 \leq
 M_1\norm{f}_{\ell^{p_1}(\mathcal X)},
\end{equation}
where $1\leq p_i,q_i\leq\infty$, $q_0<\infty$, and
$q_0\neq q_1$. For $0<\theta<1$, set
\[
 \frac1{p_\theta}
 =
 \frac{1-\theta}{p_0}+\frac\theta{p_1},
 \qquad
 \frac1{q_\theta}
 =
 \frac{1-\theta}{q_0}+\frac\theta{q_1}.
\]
Real interpolation, with secondary index $p_\theta$, gives
\[
 (\ell^{p_0},\ell^{p_1})_{\theta,p_\theta}
 =\ell^{p_\theta}
\]
and, since $q_0\neq q_1$,
\[
 (\ell^{q_0,\infty},\ell^{q_1})_{\theta,p_\theta}
 =\ell^{q_\theta,p_\theta},
\]
with equivalence of norms. Consequently, whenever
$p_\theta\leq q_\theta$, the Lorentz embedding
$\ell^{q_\theta,p_\theta}\hookrightarrow\ell^{q_\theta}$ yields
\begin{equation}\label{eq:marcinkiewicz-interpolation}
 \norm{Tf}_{\ell^{q_\theta}(\mathcal Y)}
 \lesssim_{p_0,p_1,q_0,q_1,\theta}
 M_0^{1-\theta}M_1^\theta
 \norm{f}_{\ell^{p_\theta}(\mathcal X)}.
\end{equation}
We use \eqref{eq:marcinkiewicz-interpolation} only with
$(p_0,q_0)=(r,r)$ and $(p_1,q_1)\in\{(\infty,\infty),(1,\infty),(1,1)\}$.

For a finite set $\mathcal X$ and a function
$\psi:\mathcal X\to\C$, we shall also use the norm comparison
\begin{equation}\label{eq:finite-norm-comparison}
 \norm{\psi}_{\ell^v(\mathcal X)}
 \leq\norm{\psi}_{\ell^u(\mathcal X)}
 \leq |\mathcal X|^{1/u-1/v}\norm{\psi}_{\ell^v(\mathcal X)},
 \qquad 1\leq u\leq v\leq\infty.
\end{equation}

\begin{lemma}
\label{lem:convexity}
The function $(\alpha,\beta)\mapsto
\widetilde A_n^{\mathrm{rd}}(\alpha,\beta)$ is
convex on $[0,1]^2$.
\end{lemma}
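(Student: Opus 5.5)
The plan is to combine Riesz--Thorin interpolation for the selector operators with the definition of $\widetilde A_n^{\mathrm{rd}}$ as an infimum. Fix $(\alpha_0,\beta_0),(\alpha_1,\beta_1)\in[0,1]^2$ and $\theta\in(0,1)$, and set $(\alpha_\theta,\beta_\theta)=(1-\theta)(\alpha_0,\beta_0)+\theta(\alpha_1,\beta_1)$. We must show
\[
\widetilde A_n^{\mathrm{rd}}(\alpha_\theta,\beta_\theta)\le(1-\theta)\widetilde A_n^{\mathrm{rd}}(\alpha_0,\beta_0)+\theta\widetilde A_n^{\mathrm{rd}}(\alpha_1,\beta_1).
\]
First note that each $\widetilde A_n^{\mathrm{rd}}$ is finite. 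The trivial bound $\Mrd F\le q\norm{F}_{\ell^\infty}$ combined with \eqref{eq:finite-norm-comparison} gives a bound with power at most $q^{r+2}$ for every pair. Hence for any $\varepsilon>0$ we may choose admissible exponents $a_j<\widetilde A_n^{\mathrm{rd}}(\alpha_j,\beta_j)+\varepsilon$ with constants $C_j=C(n,1/\alpha_j,1/\beta_j,a_j)$ for which \eqref{eq:def-critical-exponent} holds.

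The key step is to pass from the maximal operator to linear operators. For any selector $\mathcal S$, \eqref{eq:selector-dominated} gives $\norm{T_{\mathcal S}f}_{\ell^{1/\beta_j}}\le\norm{\Mrd(|f|)}_{\ell^{1/\beta_j}}\le C_jq^{a_j}\norm{f}_{\ell^{1/\alpha_j}}$. Thus each $T_{\mathcal S}$ is a linear operator satisfying both endpoint bounds, with constants independent of $\mathcal S$ and of $q$. Riesz--Thorin then yields
\[
\norm{T_{\mathcal S}f}_{\ell^{1/\beta_\theta}}\le C_0^{1-\theta}C_1^\theta q^{(1-\theta)a_0+\theta a_1}\norm{f}_{\ell^{1/\alpha_\theta}},
\]
uniformly in $\mathcal S$ and $q$. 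The cases where some $\alpha_j$ or $\beta_j$ equals $0$, that is, $\ell^\infty$ endpoints, are covered by the standard Riesz--Thorin theorem on finite measure spaces.

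Next, given $F$, apply this with $f=|F|$ and the maximizing selector $\mathcal S_{|F|}$ from \eqref{eq:max-selector}, noting that $\Mrd F=\Mrd|F|=T_{\mathcal S_{|F|}}|F|$. This gives \eqref{eq:def-critical-exponent} at $(\alpha_\theta,\beta_\theta)$ with exponent $(1-\theta)a_0+\theta a_1$. Therefore $\widetilde A_n^{\mathrm{rd}}(\alpha_\theta,\beta_\theta)\le(1-\theta)\widetilde A_n^{\mathrm{rd}}(\alpha_0,\beta_0)+\theta\widetilde A_n^{\mathrm{rd}}(\alpha_1,\beta_1)+\varepsilon$, and letting $\varepsilon\to0$ gives convexity.

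The main obstacle is the nonlinearity of $\Mrd$, which blocks direct interpolation. This is handled by the uniform-in-selector linearization above. The only care needed is that the interpolated constant $C_0^{1-\theta}C_1^\theta$ does not depend on the selector or on $q$, and this holds because the endpoint constants do not.
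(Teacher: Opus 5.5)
Your proposal is correct and follows essentially the same route as the paper: both pass the near-optimal bounds to every selector operator $T_{\mathcal S}$ via \eqref{eq:selector-dominated}, apply Riesz--Thorin uniformly in $\mathcal S$ and $q$, return to $\Mrd$ through $|F|$ and a maximizing selector, and then let $\varepsilon\downarrow0$. Your explicit check that $\widetilde A_n^{\mathrm{rd}}$ is finite is a harmless extra that the paper leaves implicit; the trivial bound in fact gives power $r+1$, so your $r+2$ is a valid but loose upper bound.
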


\begin{proof}
Let $\mathbf x_j=(\alpha_j,\beta_j)$ and put
$a_j=\widetilde A_n^{\mathrm{rd}}(\mathbf x_j)$ for $j=0,1$. Given
$\varepsilon>0$, the definition of the infimum supplies bounds with powers $q^{a_j+\varepsilon}$ at $\mathbf x_j$. By \eqref{eq:selector-dominated}, the same bounds hold uniformly for every linearized operator $T_{\mathcal S}$. Riesz--Thorin interpolation at $\mathbf x_\theta=(1-\theta)\mathbf x_0+\theta\mathbf x_1$ gives a selector bound with power
\[
 q^{(1-\theta)a_0+\theta a_1+\varepsilon}.
\]
Apply it to $|f|$, choose a maximizing selector, and let
$\varepsilon\downarrow0$.
This proves 
\[
 \widetilde A_n^{\mathrm{rd}}(\mathbf x_\theta)
 \leq(1-\theta)\widetilde A_n^{\mathrm{rd}}(\mathbf x_0)
       +\theta\widetilde A_n^{\mathrm{rd}}(\mathbf x_1).
\]
The corresponding assertion for an arbitrary finite convex combination follows by induction.
\end{proof}

\section{Necessary lower bounds}
\label{sec:lower-bounds}

\begin{proposition}\label{prop:lower-bounds}
For every $(\alpha,\beta)\in[0,1]^2$,
\begin{equation}\label{eq:lower-bounds}
 \widetilde A_n^{\mathrm{rd}}(\alpha,\beta)
 \geq\Phi_n(\alpha,\beta).
\end{equation}
\end{proposition}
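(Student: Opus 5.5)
Since $\Phi_n$ is the maximum of the four functionals in \eqref{eq:four-functionals}, it suffices to show $\widetilde A_n^{\mathrm{rd}}(\alpha,\beta)\geq\phi_j(\alpha,\beta)$ for each $j$ separately. For each $j$ I will exhibit a test function $F_j$, compute $\norm{F_j}_{\ell^{1/\alpha}}$ and a lower bound for $\norm{\Mrd F_j}_{\ell^{1/\beta}}$, and show that the ratio is $\gtrsim_n q^{\phi_j(\alpha,\beta)}$. If \eqref{eq:def-critical-exponent} held with some $a<\phi_j$, letting $q\to\infty$ would then give a contradiction. This is exactly the approach announced after \cref{thm:main}.

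\emph{Point mass.} Take $F=\one_{\{p\}}$. By \cref{lem:line-geometry}, $\Mrd F=1$ on $|\PP^{r-1}(\Fq)|\approx q^{r-1}$ refined directions. Hence $\norm{\Mrd F}_{\ell^{1/\beta}}\approx q^{(r-1)\beta}$ while $\norm F_{\ell^{1/\alpha}}=1$, which gives $\phi_1$.

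\emph{One horizontal line.} Take $F=\one_L$. Then $\Mrd F(\Dir L)=q$, so $\norm{\Mrd F}_{\ell^{1/\beta}}\geq q$, while $\norm F_{\ell^{1/\alpha}}=q^{\alpha}$. This gives $\phi_2=1-\alpha$.

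\emph{Constant function.} Take $F\equiv1$. Every direction is attained by some horizontal line, so $\Mrd F\equiv q$ on $\Dn$. Thus $\norm{\Mrd F}_{\ell^{1/\beta}}\approx q\cdot q^{r\beta}$ and $\norm F_{\ell^{1/\alpha}}=q^{(r+1)\alpha}$, giving $\phi_4$.

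\emph{Third example.} This is the only nontrivial one. I need a set $X\subset\{t=0\}$ with $|X|=O_n(q)$ such that $\Mrd\one_X\geq1$ on all of $\Dn$. The ratio is then $\geq q^{r\beta}/q^{\alpha}$, which is $\phi_3$. The key observation is that a direction $[w:c]$ with $w\neq0$ is realized by a horizontal line through a point $(z,0)$ exactly when $\sigma(z,w)=c$. This is an affine hyperplane condition on $z\in V$ (after rescaling $(w,c)$). So $X$ just needs its projection to meet every affine hyperplane $\{z:\sigma(z,w)=c\}$ of $V$, and since $\sigma$ is nondegenerate these are all affine hyperplanes. A line $\ell\subset V$ does not work, because a hyperplane parallel to $\ell$ can miss it. Instead I will take the union of $r$ lines through the origin in the coordinate directions $e_1,\dots,e_r$, i.e. the points $se_i$ for $s\in\Fq$, which has $\leq rq$ points. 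A hyperplane $\{z:\ell(z)=c\}$ with $\ell\neq0$ has $\ell(e_i)\neq0$ for some $i$, and then meets that axis at $s=c/\ell(e_i)$. So $X=\{(se_i,0)\}$ works.

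Each example is elementary. The main point needing care is checking, in the third example, that the horizontality condition really reduces to an affine hyperplane condition in $z$, with nonzero linear part $\sigma(\cdot,w)$, when $w\neq0$.
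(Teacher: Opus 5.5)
Your proposal is correct and follows the paper's proof essentially verbatim: the same four test functions (point mass, one horizontal line, the constant function, and the union of the $r$ coordinate axes placed in the slice $\{t=0\}$). It uses the same reduction of $\Mrd\one_X\geq1$ to $X$ meeting every affine hyperplane $\{z:\sigma(z,w)=c\}$, and the same ratio bounds yielding $\phi_1,\phi_2,\phi_4,\phi_3$ respectively.
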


\begin{proof}
Fix $u=\frac{1}{\alpha}$ and $v=\frac{1}{\beta}$, with the usual convention when $\alpha=0$ or $\beta=0$. If \eqref{eq:def-critical-exponent} holds with the power $q^a$, then every nonzero test function $F$ satisfies
\begin{equation}\label{eq:test-function-principle}
 q^a
 \gtrsim_{n,u,v,a}
 \frac{\norm{\Mrd F}_{\ell^v(\Dn)}}
      {\norm{F}_{\ell^u(\HH_n(\Fq))}}.
\end{equation}
We give four test functions and estimate the corresponding quotients from below.

\emph{A point mass.}
Fix $\mathbf{p}_0=(\mathbf{z}_0,t_0)\in\HH_n(\Fq)$ and set
$F=\one_{\{\mathbf{p}_0\}}$.  By \cref{lem:line-geometry},
$\mathbf{p}_0$ lies on exactly
\[
 |\PP^{r-1}(\Fq)|=1+q+\cdots+q^{r-1} \geq q^{r-1}
\]
horizontal lines with pairwise distinct refined directions. On each of these directions, $\Mrd F=1$. Since $\norm{F}_{\ell^u(\HH_n(\Fq))}=1$, it follows that
\[
 \frac{\norm{\Mrd F}_{\ell^v(\Dn)}}
      {\norm{F}_{\ell^u(\HH_n(\Fq))}}
 \geq |\PP^{r-1}(\Fq)|^{\frac{1}{v}}
 \geq q^{\frac{r-1}{v}}.
\]
Thus, every admissible power satisfies $a\geq\phi_1(\alpha,\beta)$.

\emph{One horizontal line.}
Let $L$ be a horizontal line and take $F=\one_L$. The standard
parametrization of $L$ is injective in the field parameter, so $|L|=q$
and
\[
 \norm{F}_{\ell^u(\HH_n(\Fq))}=q^{\frac{1}{u}}.
\]
At $\vartheta_0=\Dir(L)$, the line $L$ itself is admissible in the
definition of the maximal operator. Thus,
\[
 \Mrd F(\vartheta_0)
 \geq\sum_{\mathbf{p}\in L}\one_L(\mathbf{p})
 =q,
\]
and every $\ell^v$-norm dominates this single coordinate. Hence,
\[
 \frac{\norm{\Mrd F}_{\ell^v(\Dn)}}
      {\norm{F}_{\ell^u(\HH_n(\Fq))}}
 \geq q^{1-\frac{1}{u}}.
\]
Hence, $a\geq\phi_2(\alpha,\beta)$.

\emph{A small set realizing every refined direction.}
We seek a set of $O_n(q)$ points in the central slice $V\times\{0\}$ that meets a horizontal line in every refined direction. A point $(z,0)$ lies on a horizontal line of refined direction $[w:c]$ precisely when $\sigma(z, w)=c$. Thus, it is enough to choose a small subset of $V$ that meets every affine hyperplane arising from such an equation.

Fix a basis $e_1,\ldots,e_r$ of $V$ and define
\begin{equation}\label{eq:axes-set}
 \mathcal B
 :=
 \bigcup_{j=1}^r
 \{s e_j:s\in\Fq\}.
\end{equation}
Then $|\mathcal B|=1+r(q-1) \leq r q$, and $\mathcal B$ meets every affine hyperplane in $V$. Indeed, if $\varphi:V\to\Fq$ is a nonzero linear functional and $\Pi=\{z:\varphi(z)=c\}$, choose $j$ with
$\varphi(e_j)\neq 0$; then
$\frac{c}{\varphi(e_j)} \cdot e_j\in\Pi\cap\mathcal B$.

Let $E:=\{(z,0): z\in\mathcal B\}$ and $F:=\one_E$. 
Fix an arbitrary $\vartheta=[w:c]\in\Dn$. Since
$w\neq0$ and $\sigma$ is nondegenerate, the functional
\[
 z\longmapsto\sigma(z,w)
\]
is nonzero. Consequently,
\[
 \Pi_\vartheta
 :=\{z\in V:\sigma(z, w)=c\}
\]
is an affine hyperplane. This hyperplane is independent of the chosen representative of the projective point $[w:c]$. Choose $z_\vartheta\in\mathcal B\cap\Pi_\vartheta$. Then
\begin{equation}\label{eq:witnessing-line-final}
 L_{(z_\vartheta,0), w}
 =
 \{(z_\vartheta+sw,sc):s\in\Fq\}
\end{equation}
has refined direction $
 [w:\sigma(z_\vartheta,w)]
 =[w:c]
 =\vartheta$, 
and it contains $(z_\vartheta,0)\in E$. Consequently,
\[
 \Mrd F(\vartheta)\geq1
 \qquad(\vartheta\in\Dn),
\]
which together with $|E|=|\mathcal{B}| \leq r q$ implies that
\[
 \frac{\norm{\Mrd F}_{\ell^v(\Dn)}}
      {\norm{F}_{\ell^u(\HH_n(\Fq))}}
      \geq
 \frac{|\Dn|^{\frac{1}{v}}}{|E|^{\frac{1}{u}}}
 \geq
 r^{-\frac{1}{u}}q^{\frac{r}{v}-\frac{1}{u}}
 \gtrsim_n q^{\frac{r}{v}-\frac{1}{u}}.
\]
Thus, $a\geq\phi_3(\alpha,\beta)$.

\emph{The constant function.}
Finally, take $F\equiv1$ on $\HH_n(\Fq)$. Since
$|\HH_n(\Fq)|=q^{r+1}$,
\[
 \norm{F}_{\ell^u(\HH_n(\Fq))}
 =q^{\frac{r+1}{u}}.
\]
Every refined direction is realized by a horizontal line, and every
horizontal line contains $q$ points. Hence,
\[
 \Mrd F(\vartheta)=q
 \qquad(\vartheta\in\Dn).
\]
Therefore, by the third identity in \eqref{eq:cardinalities},
\[
 \frac{\norm{\Mrd F}_{\ell^v(\Dn)}}
      {\norm{F}_{\ell^u(\HH_n(\Fq))}}
 =
 \frac{q|\Dn|^{\frac{1}{v}}}{q^{\frac{r+1}{u}}}
 \geq
 q^{1+\frac{r}{v}-\frac{r+1}{u}}
 =
 q^{\phi_4(\alpha,\beta)}.
\]
Hence, $a\geq\phi_4(\alpha,\beta)$. Every admissible $a$ therefore
dominates all four affine functions, so
\[
 a\geq\max_{1\leq j\leq4}\phi_j(\alpha,\beta)
 =\Phi_n(\alpha,\beta).
\]
Taking the infimum over admissible powers proves the proposition.
\end{proof}

\section{Polynomial estimates for rich horizontal directions}
\label{sec:polynomial-estimate}

The purpose of this section is to prove the estimate needed when many rich directions lie in one affine chart of the refined-direction space. We first describe the horizontal lines in this chart. We then define multiplicity only in horizontal directions, construct a polynomial with the required multiplicity at every point of a given set, and use its leading homogeneous part to count the rich directions.

\subsection{Normalized lines in the affine direction chart}

Let
\begin{equation}\label{eq:affine-chart-directions}
 \Dn^{\mathrm{aff}}
 :=
 \bigl\{[\xi:c]\in\Dn:c\neq0\bigr\}
 =
 \bigl\{[w:1]:w\in V\setminus\{0\}\bigr\}.
\end{equation}
The standard affine chart of $\mathbb P(V\oplus\Fq)$ determined by
$c\neq0$ is identified with $V$ through
$[\xi:c]\mapsto c^{-1}\xi$.  The vector $0\in V$ would correspond to
$[0:1]$, which is not a refined direction because its spatial component is zero. Thus, $\Dn^{\mathrm{aff}}$ is identified with the punctured affine space $V\setminus\{0\}$.  More precisely, the map
\[
 V\setminus\{0\}\longrightarrow\Dn^{\mathrm{aff}},
 \qquad
 w\longmapsto[w:1],
\]
is a bijection. 

\begin{lemma}
\label{lem:normalized-horizontal-lines}
Fix $w\in V\setminus\{0\}$.  The assignment
\[
 \rho\in V\longmapsto L_w(\rho),
 \qquad
 L_w(\rho)
 :=
 \bigl\{(\rho+sw,s):s\in\Fq\bigr\},
\]
is a bijection from
\[
 \bigl\{\rho\in V:\sigma(\rho,w)=1\bigr\}
\]
onto the family of horizontal lines with refined direction $[w:1]$.
\end{lemma}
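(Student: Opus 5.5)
I would prove this by checking three things directly from \eqref{eq:horizontal-line} and \eqref{eq:refined-direction}: each $L_w(\rho)$ is a horizontal line with refined direction $[w:1]$, every such line arises this way, and distinct admissible $\rho$ give distinct lines.

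\emph{Well-definedness.} Fix $\rho$ with $\sigma(\rho,w)=1$ and take the base point $p=(\rho,0)$. By \eqref{eq:horizontal-line},
\[
 L_{p,w}=\{(\rho+sw,\,s\,\sigma(\rho,w)):s\in\Fq\}=\{(\rho+sw,s):s\in\Fq\}=L_w(\rho).
\]
Its refined direction is $[w:\sigma(\rho,w)]=[w:1]$.

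\emph{Surjectivity.} Let $L=L_{p,w'}$ be a horizontal line with $\Dir(L)=[w:1]$, where $p=(z_0,t_0)$. Then $[w':\sigma(z_0,w')]=[w:1]$, so $w'=cw$ and $\sigma(z_0,w')=c$ for some $c\neq0$. Rescaling the parameter $s$ replaces $w'$ by $w$ without changing the line, since the description in \eqref{eq:horizontal-line} is linear in $w$. Hence $L=L_{p,w}$ with $\sigma(z_0,w)=1$, that is,
\[
 L=\{(z_0+sw,\,t_0+s):s\in\Fq\}.
\]
Now reparametrize by $s'=s+t_0$ and set $\rho:=z_0-t_0w$. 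Then $L=\{(\rho+s'w,s'):s'\in\Fq\}=L_w(\rho)$. Moreover $\sigma(\rho,w)=\sigma(z_0,w)-t_0\sigma(w,w)=1$, because $\sigma$ is alternating.

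\emph{Injectivity.} The line $L_w(\rho)$ meets the slice $\{t=0\}$ in exactly one point, namely $(\rho,0)$, since the $t$-coordinate of $(\rho+sw,s)$ is $s$. So $\rho$ is recovered from the line, and $\rho\mapsto L_w(\rho)$ is injective.

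The only step that needs any care is surjectivity: one must normalize the representative of the spatial direction and use $\sigma(w,w)=0$ to check that the new base point still satisfies $\sigma(\rho,w)=1$. Everything else is immediate.
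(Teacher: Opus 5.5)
Your proof is correct and follows the paper's argument essentially step for step. Well-definedness comes from the base point $(\rho,0)$, surjectivity from reparametrizing by the central coordinate with $\rho=z_0-t_0w$ and $\sigma(w,w)=0$, and injectivity from recovering $\rho$ as the unique point of the line in $\{t=0\}$; the only cosmetic difference is that you first rescale $w'$ to $w$ and then shift by $t_0$, whereas the paper does both at once via $s=t_0+\zeta c$.
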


\begin{proof}
Let $L=L_{(z_0,t_0),\xi}$ be a horizontal line with refined direction
$[w:1]$, and put
\[
 c:=\sigma(z_0,\xi).
\]
By \eqref{eq:horizontal-line} and \eqref{eq:refined-direction},
\[
 L
 =
 \bigl\{(z_0+\zeta\xi,t_0+\zeta c):\zeta\in\Fq\bigr\},
 \qquad
 \Dir(L)=[\xi:c].
\]
Since $[\xi:c]=[w:1]$, comparison of the last homogeneous coordinates gives $c\neq0$ and $\xi=cw$.

We now use the central coordinate as the line parameter.  Put
\[
 s:=t_0+\zeta c.
\]
Since $c\neq0$, the variable $s$ ranges over all of $\Fq$, and
$\zeta=c^{-1}(s-t_0)$.  Therefore,
\begin{align*}
 (z_0+\zeta\xi,t_0+\zeta c)
 &=\bigl(z_0+(s-t_0)c^{-1}\xi,s\bigr)\\
 &=\bigl(z_0-t_0w+sw,s\bigr).
\end{align*}
With
\[
 \rho:=z_0-t_0w,
\]
this gives
\begin{equation}\label{eq:normalized-line}
 L=L_w(\rho)
 =
 \bigl\{(\rho+sw,s):s\in\Fq\bigr\}.
\end{equation}
Moreover, since $\sigma$ is alternating,
\begin{align*}
 \sigma(\rho,w)
 &=\sigma(z_0-t_0w,w)\\
 &=\sigma(z_0,w)-t_0\sigma(w,w)\\
 &=c^{-1}\sigma(z_0,\xi)
 =1.
\end{align*}

Conversely, suppose that $\sigma(\rho,w)=1$.  The horizontal line through
$(\rho,0)$ with spatial direction $w$ is
\[
 L_{(\rho,0),w}
 =
 \bigl\{(\rho+sw,s\sigma(\rho,w)):s\in\Fq\bigr\}
 =L_w(\rho),
\]
and its refined direction is $[w:1]$.

Finally, $w$ is determined by the normalized direction $[w:1]$. Once
$w$ is fixed, $L_w(\rho)$ meets $V\times\{0\}$ at the unique point
$(\rho,0)$, so $\rho$ is also uniquely determined.
\end{proof}

Since $\sigma$ is nondegenerate and $w\neq0$, the map
$\rho\mapsto\sigma(\rho,w)$ is a nonzero linear functional on $V$.
Consequently, $\{\rho:\sigma(\rho,w)=1\}$ is an affine hyperplane with
$q^{r-1}$ points. Thus, every direction $[w:1]\in\Dn^{\mathrm{aff}}$ is represented by exactly $q^{r-1}$ horizontal lines. In particular,
\begin{equation}\label{eq:affine-chart-cardinality}
 |\Dn^{\mathrm{aff}}|=q^r-1.
\end{equation}
The advantage of \eqref{eq:normalized-line} is that the variable $s$ used to parametrize the line is exactly the central coordinate of the point $(\rho+sw,s)$. Fix coordinates on $V$, and write
$\mathbf Z=(Z_1,\ldots,Z_r)$ for the corresponding formal variables and
$U$ for the central formal variable. If $P\in\Fq[\mathbf Z,U]$, then its restriction to $L_w(\rho)$ is the univariate polynomial
\[
 s\longmapsto P(\rho+sw,s).
\]
For $E\subset\HH_n(\Fq)$, define the set of normalized vector parameters
\begin{equation}\label{eq:normalized-rich-vectors}
 \widetilde\Omega_\lambda(E)
 :=
 \left\{w\in V\setminus\{0\}:
 \max_{\substack{\rho\in V\\\sigma(\rho,w)=1}}
 |L_w(\rho)\cap E|\geq\lambda
 \right\}.
\end{equation}

Thus, $\widetilde\Omega_\lambda(E)$ is a subset of $V\setminus\{0\}$,
not a subset of the projective direction space. Its elements are the
unique normalized vector representatives of the rich directions in
$\Dn^{\mathrm{aff}}$.
By \cref{lem:normalized-horizontal-lines}, the maximum in
\eqref{eq:normalized-rich-vectors} is taken over all horizontal lines of refined direction $[w:1]$. Hence,
\begin{equation}\label{eq:normalized-rich-cardinality}
 |\widetilde\Omega_\lambda(E)|
 =
 \bigl|\Omega_\lambda(E)\cap\Dn^{\mathrm{aff}}\bigr|.
\end{equation}

\subsection{Horizontal multiplicity}
The purpose of this subsection is to construct a polynomial whose
restriction to every normalized horizontal line introduced above vanishes, with prescribed multiplicity, at the parameters corresponding to points of $E$. More precisely, let $E\subset\HH_n(\Fq)=V\times\Fq$ be nonempty, and let $m\geq1$ be an integer. We shall prove that there is a single nonzero polynomial $P\in\Fq[\mathbf Z,U]$ satisfying
\begin{equation}\label{eq:horizontal-multiplicity-overview-degree}
 \deg P
 \leq
 C_{\mathrm{int}}|E|^{1/(r+1)}m^{r/(r+1)},
\end{equation}
where $C_{\mathrm{int}}=C_{\mathrm{int}}(n)$, with the following property.
For every $w\in V\setminus\{0\}$, every $\rho\in V$ satisfying
$\sigma(\rho,w)=1$, and every $s_0\in\Fq$ for which
\[
 p=(\rho+s_0w,s_0)\in L_w(\rho)\cap E,
\]
the univariate restriction
\[
 R_{\rho,w}(s):=P(\rho+sw,s)
\]
satisfies
\begin{equation}\label{eq:horizontal-multiplicity-overview-root}
 (s-s_0)^m\mid R_{\rho,w}(s).
\end{equation}
Equivalently, $R_{\rho,w}$ has a zero of Hasse multiplicity at least $m$
at every parameter $s_0$ corresponding to a point of $L_w(\rho)\cap E$. Thus, the same polynomial $P$ works simultaneously for all normalized horizontal lines and all their intersection points with $E$.

Let us first explain the relevant space of directions at a point.  If
$p=(z,t)$ and $w\in V\setminus\{0\}$, then the horizontal line through $p$ with spatial direction $w$ is
\[
 s\longmapsto\bigl(z+sw,t+s\sigma(z,w)\bigr).
\]
Its direction vector in $V\times\Fq$ is $(w,\sigma(z,w))$.

For $p=(z,t)\in V\times\Fq$, let
\begin{equation}\label{eq:horizontal-tangent-space}
 \mathcal H_p
 :=
 \bigl\{(\xi,\sigma(z,\xi)):\xi\in V\bigr\}.
\end{equation}
Thus, $\mathcal H_p$ consists precisely of the direction vectors of
horizontal lines through $p$, together with the zero vector.  It is the graph of the linear functional $\xi\mapsto\sigma(z,\xi)$ and is therefore an $r$-dimensional linear subspace of $V\times\Fq$.

\begin{definition}
\label{def:horizontal-multiplicity}
Let $P\in\Fq[\mathbf Z,U]$, and let $m$ be a positive integer.  At
$p\in V\times\Fq$, write the Hasse--Taylor expansion
\begin{equation}\label{eq:Hasse-expansion}
 P(p+h)=\sum_{j\geq0}P_p^{(j)}(h),
\end{equation}
where $P_p^{(j)}$ is homogeneous of degree $j$ in
$h\in V\times\Fq$. We write
\[
 \operatorname{mult}^{\mathrm{hor}}_p(P)\geq m
\]
if, for every $0\leq j<m$, the restriction of $P_p^{(j)}$ to
$\mathcal H_p$ is the zero polynomial.
\end{definition}
We record explicitly how horizontal multiplicity is related to the
Hasse--Taylor expansion of the restriction of $P$ to the horizontal
directions at a point.

Fix $p=(z,t)\in\HH_n(\Fq)$. Introduce an ambient increment
\[
 h=(X,T)\in V\times\Fq.
\]
The Hasse--Taylor expansion of $P$ at $p$ is the formal polynomial
identity
\begin{equation}\label{eq:hasse-taylor-at-p}
 P(p+h)
 =
 \sum_{\alpha\in\mathbb N^{2n+1}}
 \partial^{[\alpha]}P(p)\,h^\alpha
 =
 \sum_{j\geq0}P_{p,j}(h),
\end{equation}
where $\partial^{[\alpha]}P$ denotes the Hasse derivative and
\[
 P_{p,j}(h)
 :=
 \sum_{|\alpha|=j}
 \partial^{[\alpha]}P(p)\,h^\alpha.
\]
Thus, $P_{p,j}$ is a homogeneous polynomial of degree $j$ in the
ambient increment $h$.

The linear space of horizontal directions at $p$ is
\[
 \mathcal H_p
 =
 \bigl\{(v,\sigma(z,v)):v\in V\bigr\}.
\]
It is parametrized by the linear map
\[
 \iota_p:V\longrightarrow\mathcal H_p,
 \qquad
 \iota_p(v):=(v,\sigma(z,v)).
\]
We restrict $P$ to the affine tangent slice determined by the
horizontal directions at $p$ by setting
\[
 \widetilde P_p(v)
 :=
 P\bigl(p+\iota_p(v)\bigr)
 =
 P\bigl(z+v,t+\sigma(z,v)\bigr).
\]
Substituting $h=\iota_p(v)$ into the formal identity
\eqref{eq:hasse-taylor-at-p} gives
\begin{equation}\label{eq:restricted-hasse-taylor}
 \widetilde P_p(v)
 =
 \sum_{j\geq0}
 P_{p,j}\bigl(\iota_p(v)\bigr)
 =
 \sum_{j\geq0}
 P_{p,j}\bigl(v,\sigma(z,v)\bigr)
\end{equation}
as an identity in the polynomial ring $\Fq[V]$.

For each $j$, the polynomial
\[
 v\longmapsto P_{p,j}\bigl(v,\sigma(z,v)\bigr)
\]
is homogeneous of degree $j$. Indeed, since $z$ is fixed, the map
$v\mapsto\iota_p(v)$ is linear, and hence, for every $a\in\Fq$,
\[
 P_{p,j}\bigl(\iota_p(av)\bigr)
 =
 P_{p,j}\bigl(a\iota_p(v)\bigr)
 =
 a^jP_{p,j}\bigl(\iota_p(v)\bigr).
\]
Equivalently, if
\[
 P_{p,j}(X,T)
 =
 \sum_{|\beta|+\ell=j}
 c_{\beta,\ell}X^\beta T^\ell,
\]
then
\[
 P_{p,j}\bigl(v,\sigma(z,v)\bigr)
 =
 \sum_{|\beta|+\ell=j}
 c_{\beta,\ell}v^\beta\sigma(z,v)^\ell,
\]
and every term on the right has total degree
$|\beta|+\ell=j$ in $v$.

It follows from \eqref{eq:restricted-hasse-taylor} that the
degree-$j$ homogeneous part of $\widetilde P_p$ is precisely
\[
 P_{p,j}\big|_{\mathcal H_p}
 :=
 P_{p,j}\circ\iota_p.
\]
Because terms of distinct homogeneous degrees cannot cancel one
another, we obtain
\begin{equation}\label{eq:horizontal-multiplicity-restriction}
 \operatorname{mult}^{\mathrm{hor}}_p(P)\geq m
 \quad\Longleftrightarrow\quad
 P_{p,j}\circ\iota_p=0
 \text{ in }\Fq[V]
 \quad\text{for every }0\leq j<m.
\end{equation}
Equivalently,
\[
 \operatorname{mult}^{\mathrm{hor}}_p(P)\geq m
 \quad\Longleftrightarrow\quad
 \widetilde P_p
 \in (v_1,\ldots,v_{2n})^m.
\]
Here $(v_1,\ldots,v_{2n})$ denotes the ideal of the origin in
$\Fq[v_1,\ldots,v_{2n}]$. Its $m$th power consists precisely of
the polynomials all of whose monomials have total degree at least $m$.
Thus, $\operatorname{mult}^{\mathrm{hor}}_p(P)$ is the ordinary Hasse multiplicity at $v=0$ of the polynomial $\widetilde P_p$. In other words, it measures the order to which $P$ vanishes along all horizontal
directions based at $p$. Here the condition
\[
 P_{p,j}\circ\iota_p=0\quad\text{in }\Fq[V]
\]
means that $P_{p,j}\circ\iota_p$ is the zero formal polynomial, namely
that all of its coefficients vanish. It is stronger than requiring
that the associated polynomial function vanish at every point of
$V(\Fq)$.  This distinction is necessary over finite fields, where a
nonzero formal polynomial may induce the zero function.

\begin{lemma}
\label{lem:multiplicity-condition-count}
For every positive integer $m$ and every fixed $p$, the condition
$\operatorname{mult}^{\mathrm{hor}}_p(P)\geq m$ imposes at most
\begin{equation}\label{eq:multiplicity-condition-count}
 \binom{m+r-1}{r}
\end{equation}
homogeneous linear conditions on the coefficients of $P$.
\end{lemma}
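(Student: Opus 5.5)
The plan is to use the reformulation \eqref{eq:horizontal-multiplicity-restriction} and count coefficients. Fix $p=(z,t)$. By \eqref{eq:horizontal-multiplicity-restriction}, $\operatorname{mult}^{\mathrm{hor}}_p(P)\geq m$ holds exactly when, for every $0\leq j<m$, the homogeneous polynomial $P_{p,j}\circ\iota_p\in\Fq[V]$ is the zero formal polynomial. Equivalently, all coefficients of $\widetilde P_p(v)=P(z+v,t+\sigma(z,v))$ in the monomials $v^\beta$ with $|\beta|<m$ must vanish. So the natural conditions are indexed by the monomials $v^\beta$ in $r$ variables of total degree at most $m-1$.

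First I would check that each condition is linear and homogeneous in the coefficients of $P$. The Hasse derivatives $\partial^{[\alpha]}P(p)$ are $\Fq$-linear functionals of the coefficient vector of $P$, since $P\mapsto\partial^{[\alpha]}P$ is linear and so is evaluation at $p$. Substituting $h=\iota_p(v)=(v,\sigma(z,v))$ and expanding $\sigma(z,v)^\ell$ makes the coefficient of $v^\beta$ in $P_{p,j}\circ\iota_p$ an $\Fq$-linear combination of the numbers $\partial^{[\alpha]}P(p)$ with $|\alpha|=j$. The combining coefficients depend only on $z$ and not on $P$. Hence each requirement ``coefficient of $v^\beta$ vanishes'' is a homogeneous linear equation in the coefficients of $P$. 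A shortcut is that $P\mapsto\widetilde P_p$ is itself a linear map, because it is composition with the fixed affine map $v\mapsto p+\iota_p(v)$.

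Next I would count the indices. The monomials $v^\beta$ in $r=2n$ variables with $|\beta|\leq m-1$ are counted by stars and bars: there are $\binom{(m-1)+r}{r}=\binom{m+r-1}{r}$ of them. So the condition is the conjunction of at most $\binom{m+r-1}{r}$ homogeneous linear equations, which is \eqref{eq:multiplicity-condition-count}. The bound is stated with ``at most'' because some of these equations may be dependent or trivial.

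The only step needing care is the formal-versus-functional issue the paper flags. I will work throughout in the polynomial ring $\Fq[V]$, so vanishing means vanishing of coefficients and not vanishing as a function on $V(\Fq)$. Then the count is exactly the number of monomials, with no appeal to values at points. I expect no real obstacle beyond this.
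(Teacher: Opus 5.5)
Your proposal is correct and is essentially the paper's argument. Both parametrize $\mathcal H_p$ by $\iota_p$, note that each coefficient of $P_{p,j}\circ\iota_p$ is a homogeneous linear form in the coefficients of $P$, and count those coefficients. The only difference is bookkeeping: the paper sums $\binom{j+r-1}{r-1}$ over $0\leq j<m$, while you count the monomials of degree at most $m-1$ directly, which gives the same $\binom{m+r-1}{r}$.
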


\begin{proof}
Write $p=(z,t)$ and parametrize $\mathcal H_p$ by the injective linear map
\[
 \iota_p:V\longrightarrow\mathcal H_p,
 \qquad
 \iota_p(\xi):=(\xi,\sigma(z,\xi)).
\]
For each $j\geq0$, put
\[
 Q_p^{(j)}(\xi)
 :=
 P_p^{(j)}\bigl(\xi,\sigma(z,\xi)\bigr).
\]
Because $\iota_p$ is linear and $P_p^{(j)}$ is homogeneous of degree $j$, $Q_p^{(j)}$ is a homogeneous polynomial of degree $j$ in the $r$ coordinates of $\xi$. The condition $P_p^{(j)}|_{\mathcal H_p}=0$ says precisely that every coefficient of the formal polynomial $Q_p^{(j)}$ is zero. There are at most $\binom{j+r-1}{r-1}$ such coefficients, and each is a homogeneous linear expression in the coefficients of $P$. Summing over $0\leq j<m$ gives
\[
 \sum_{j=0}^{m-1}\binom{j+r-1}{r-1}
 =\binom{m+r-1}{r}.
\]
The resulting equations need not be independent, which is why the lemma gives an upper bound for the number of conditions.
\end{proof}

For comparison, ordinary multiplicity $m$ in the $r+1$ ambient variables
would impose $\binom{m+r}{r+1}=O_n(m^{r+1})$ conditions at one point.
Horizontal multiplicity imposes only $\binom{m+r-1}{r}=O_n(m^r)$ conditions.

\begin{lemma}
\label{lem:horizontal-interpolation}
There is a constant $C_{\mathrm{int}}=C_{\mathrm{int}}(n)\geq1$ with
the following property. Let $E\subset V\times\Fq$ be nonempty, and let $m\geq1$ be an integer. Then there is a nonzero polynomial
$P\in\Fq[\mathbf Z,U]$ such that
\[
 \operatorname{mult}^{\mathrm{hor}}_p(P)\geq m
 \qquad(p\in E)
\]
and
\begin{equation}\label{eq:horizontal-interpolation-degree}
 \deg P
 \leq
 C_{\mathrm{int}}|E|^{1/(r+1)}m^{r/(r+1)}.
\end{equation}
\end{lemma}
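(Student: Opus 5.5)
This is a standard parameter count, using \cref{lem:multiplicity-condition-count}. Let $\mathcal P_D\subset\Fq[\mathbf Z,U]$ denote the space of polynomials of total degree at most $D$ in the $r+1$ variables $Z_1,\ldots,Z_r,U$. Its dimension is
\[
 \dim\mathcal P_D=\binom{D+r+1}{r+1}\geq\frac{D^{r+1}}{(r+1)!}.
\]
By \cref{lem:multiplicity-condition-count}, requiring $\operatorname{mult}^{\mathrm{hor}}_p(P)\geq m$ for every $p\in E$ imposes at most $|E|\binom{m+r-1}{r}$ homogeneous linear conditions on the coefficients of $P$. Each condition is a linear functional on $\mathcal P_D$: the Hasse derivatives $\partial^{[\alpha]}P(p)$ depend linearly on the coefficients, and composing with the fixed linear map $\iota_p$ preserves linearity. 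So if
\[
 \dim\mathcal P_D>|E|\binom{m+r-1}{r},
\]
the homogeneous linear system has a nonzero solution $P\in\mathcal P_D$. This $P$ satisfies the multiplicity conditions at every point of $E$ and has $\deg P\leq D$.

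It remains to choose $D$. We have $\binom{m+r-1}{r}\leq (m+r-1)^r/r!\leq (rm)^r/r!$, since $m\geq1$ gives $m+r-1\leq rm$. It therefore suffices to take $D$ with
\[
 D^{r+1}>(r+1)\,r^r\,|E|\,m^r,
\]
for example
\[
 D=\Bigl\lfloor\bigl((r+1)r^r\bigr)^{1/(r+1)}|E|^{1/(r+1)}m^{r/(r+1)}\Bigr\rfloor+1.
\]
Because $|E|\geq1$ and $m\geq1$, the quantity $|E|^{1/(r+1)}m^{r/(r+1)}\geq1$, so the additive $+1$ can be absorbed. This gives $D\leq C_{\mathrm{int}}|E|^{1/(r+1)}m^{r/(r+1)}$ with $C_{\mathrm{int}}=((r+1)r^r)^{1/(r+1)}+1$, which depends only on $n$.

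No step is a real obstacle. The one point to check is that the conditions in \cref{def:horizontal-multiplicity} are linear in the coefficients of $P$ for every degree $D$, including when $D\geq q$. This holds because everything is a formal-polynomial identity: Hasse derivatives are defined coefficientwise, and the vanishing of $P_{p,j}\circ\iota_p$ is required in $\Fq[V]$ as a formal polynomial rather than as a function. The count in \cref{lem:multiplicity-condition-count} is therefore valid regardless of how $D$ compares with $q$. Note also that the dimension count uses all monomials of degree at most $D$ in the $r+1$ variables, including powers of $U$. This is what produces the exponent $1/(r+1)$ on $|E|$, compared with the $m^{r}$ growth of the horizontal conditions.
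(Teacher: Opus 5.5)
Your proof is correct and follows the paper's parameter count: you compare $\dim\mathcal P_D\geq D^{r+1}/(r+1)!$ with the at most $|E|\binom{m+r-1}{r}$ linear conditions from \cref{lem:multiplicity-condition-count}, then absorb the rounding using $|E|^{1/(r+1)}m^{r/(r+1)}\geq1$. The only difference is that you make the constants explicit via $\binom{m+r-1}{r}\leq(rm)^r/r!$, whereas the paper simply chooses $C_{\mathrm{deg}}$ large enough.
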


\begin{proof}
Set $N=|E|$.
Let $\operatorname{Poly}_{\leq D}$ be the vector space of polynomials in the
$r+1$ formal variables $(\mathbf Z,U)$ of total degree at most $D$.  Then
\[
 \dim\operatorname{Poly}_{\leq D}
 =
 \binom{D+r+1}{r+1}
 \geq
 \frac{D^{r+1}}{(r+1)!}.
\]
By \cref{lem:multiplicity-condition-count}, horizontal multiplicity at least $m$ at all points of $E$ imposes at most
\[
 N\binom{m+r-1}{r}
 \leq C_{\mathrm{cond}}Nm^r
\]
homogeneous linear conditions, for a constant
$C_{\mathrm{cond}}=C_{\mathrm{cond}}(n)$. Choose
$C_{\mathrm{deg}}=C_{\mathrm{deg}}(n)\geq1$ so large that
\[
 \frac{C_{\mathrm{deg}}^{r+1}}{(r+1)!}>C_{\mathrm{cond}},
\]
and set
\[
 D_0
 :=
 \left\lceil C_{\mathrm{deg}}N^{1/(r+1)}m^{r/(r+1)}\right\rceil.
\]
Then $\dim\operatorname{Poly}_{\leq D_0}>C_{\mathrm{cond}}Nm^r$.
The corresponding homogeneous linear system therefore has a nonzero solution $P\in\operatorname{Poly}_{\leq D_0}$. Since
$N^{1/(r+1)}m^{r/(r+1)}\geq1$, the ceiling can be absorbed into a constant $C_{\mathrm{int}}$ depending only on $n$, which proves
\eqref{eq:horizontal-interpolation-degree}.
\end{proof}

\begin{lemma}
\label{lem:line-multiplicity}
Let $P\in\Fq[\mathbf Z,U]$ and let $m\geq1$ be an integer.  Fix
$w\in V\setminus\{0\}$ and $\rho\in V$ with $\sigma(\rho,w)=1$, and let
$p=(\rho+s_0w,s_0)\in L_w(\rho)$.  If
$\operatorname{mult}^{\mathrm{hor}}_p(P)\geq m$, then
\[
 R_{\rho,w}(s):=P(\rho+sw,s)
\]
has a zero of Hasse multiplicity at least $m$ at $s=s_0$.
\end{lemma}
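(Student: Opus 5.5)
The plan is to reduce the statement to the restricted-Taylor characterization \eqref{eq:horizontal-multiplicity-restriction}. Write $p=(z,t)$ with $z=\rho+s_0w$ and $t=s_0$. The key observation is that the line $L_w(\rho)$, translated to start at $p$, moves only in a horizontal direction at $p$. Indeed, for $s=s_0+\delta$,
\[
 (\rho+sw,s)=p+\delta\,(w,1),
\]
and $(w,1)$ lies in $\mathcal H_p$ because
\[
 \sigma(z,w)=\sigma(\rho,w)+s_0\sigma(w,w)=1,
\]
using that $\sigma$ is alternating and $\sigma(\rho,w)=1$. Hence $(w,1)=\iota_p(w)$, and we obtain the formal identity
\[
 R_{\rho,w}(s_0+\delta)=P\bigl(p+\iota_p(\delta w)\bigr)=\widetilde P_p(\delta w)
\]
in $\Fq[\delta]$. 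This is a substitution of the linear map $\delta\mapsto \delta w$ into the polynomial $\widetilde P_p\in\Fq[V]$.

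Next I apply the hypothesis. By \eqref{eq:horizontal-multiplicity-restriction}, $\operatorname{mult}^{\mathrm{hor}}_p(P)\geq m$ means $\widetilde P_p=\sum_{j\geq m}P_{p,j}\circ\iota_p$, so every monomial of $\widetilde P_p$ has total degree at least $m$. Substituting $v=\delta w$ sends a homogeneous polynomial of degree $j$ to $\delta^j$ times a constant, namely its value at $w$. Therefore
\[
 R_{\rho,w}(s_0+\delta)=\sum_{j\geq m}\bigl(P_{p,j}\circ\iota_p\bigr)(w)\,\delta^j,
\]
which is divisible by $\delta^m$. Replacing $\delta$ by $s-s_0$ gives $(s-s_0)^m\mid R_{\rho,w}(s)$ in $\Fq[s]$. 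This is exactly Hasse multiplicity at least $m$ at $s_0$ for a univariate polynomial, since its Hasse–Taylor coefficients at $s_0$ of order less than $m$ are the coefficients of $\delta^j$ for $j<m$ in the expansion above.

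The only point needing care is that every step is an identity of formal polynomials, not of functions on $\Fq$. Substituting into a polynomial ring is a ring homomorphism, so the formal Hasse–Taylor identity \eqref{eq:hasse-taylor-at-p} survives the substitution $h=\iota_p(\delta w)$. The vanishing in \eqref{eq:horizontal-multiplicity-restriction} holds as formal polynomials, so no finite-field degeneracy, such as a polynomial like $x^q-x$ that vanishes as a function, can occur. I expect this bookkeeping to be the main thing to check; everything else is just checking that $(w,1)$ lies in $\mathcal H_p$.
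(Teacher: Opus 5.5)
Your proof is correct and follows essentially the same route as the paper: both check that $(w,1)\in\mathcal H_p$ via $\sigma(\rho+s_0w,w)=\sigma(\rho,w)=1$, then substitute $h=\delta(w,1)$ into the formal Hasse--Taylor expansion at $p$, so that the coefficient of $\delta^j$ is $P_{p,j}(w,1)$, which vanishes for $j<m$. The only cosmetic difference is that you pass through $\widetilde P_p$ and \eqref{eq:horizontal-multiplicity-restriction}, whereas the paper reads the vanishing directly off \cref{def:horizontal-multiplicity}.
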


\begin{proof}

Since $\sigma(\rho,w)=1$ and $\sigma(w,w)=0$,
\[
 \sigma(\rho+s_0w,w)
 =
 \sigma(\rho,w)+s_0\sigma(w,w)
 =1.
\]
Thus, $(w,1)\in\mathcal H_p$.  Using a scalar formal increment $h_0$, we have
\begin{align*}
 R_{\rho,w}(s_0+h_0)
 &=P\bigl(\rho+(s_0+h_0)w,s_0+h_0\bigr)\\
 &=P\bigl(p+h_0(w,1)\bigr)\\
 &=\sum_{j\geq0}P_p^{(j)}\bigl(h_0(w,1)\bigr)\\
 &=\sum_{j\geq0}h_0^jP_p^{(j)}(w,1).
\end{align*}
The coefficient of $h_0^j$ in the Hasse expansion of $R_{\rho,w}$ at $s_0$ is therefore $P_p^{(j)}(w,1)$. It vanishes for every $j<m$ because
$(w,1)\in\mathcal H_p$ and $\operatorname{mult}^{\mathrm{hor}}_p(P)\geq m$. Consequently, the above expansion contains only terms of degree at least $m$ in $h_0$, and hence,
\[
 h_0^m\mid R_{\rho,w}(s_0+h_0)
 \qquad\text{in }\Fq[h_0].
\]
Replacing $h_0$ by $s-s_0$, we obtain
\[
 (s-s_0)^m\mid R_{\rho,w}(s).
\]
Thus, $s_0$ is a zero of $R_{\rho,w}$ of multiplicity at least $m$, as
claimed.
\end{proof}

\begin{lemma}
\label{lem:leading-term-on-line}
Let $P\in\Fq[\mathbf Z,U]$ be nonzero of degree $D$, and let
$P^{\mathrm{top}}$ be its homogeneous part of degree $D$. For
$\rho,w\in V$, define
\[
 R_{\rho,w}(s):=P(\rho+sw,s).
\]
Set
\[
 \Gamma_P(\mathbf Z):=P^{\mathrm{top}}(\mathbf Z,1).
\]
Then the coefficient of $s^D$ in $R_{\rho,w}$ is $\Gamma_P(w)$.
Moreover, $\Gamma_P$ is a nonzero formal polynomial of degree at most $D$. 
\end{lemma}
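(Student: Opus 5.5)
The plan is to decompose $P$ into homogeneous parts and read off the top coefficient of $R_{\rho,w}$ directly. Write
\[
 P=\sum_{k=0}^{D}P_k,
 \qquad
 P_k \text{ homogeneous of degree } k \text{ in } (\mathbf Z,U),
 \qquad
 P_D=P^{\mathrm{top}}\neq0 .
\]
Substituting $(\mathbf Z,U)=(\rho+sw,s)=s(w,1)+(\rho,0)$ into a monomial of total degree $k$ produces a polynomial in $s$ of degree at most $k$. Its $s^k$-coefficient comes only from choosing the $s(w,1)$ part in every factor. Hence the $s^k$-coefficient of $P_k(\rho+sw,s)$ is $P_k(w,1)$, and the pieces $P_k$ with $k<D$ contribute nothing to $s^D$. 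Therefore the coefficient of $s^D$ in $R_{\rho,w}$ is $P^{\mathrm{top}}(w,1)=\Gamma_P(w)$. This is a formal identity in $\Fq[s]$, valid for all $\rho,w$, including $w=0$ and with no condition $\sigma(\rho,w)=1$ needed. To make it fully rigorous, I would expand $\prod_i(\rho_i+sw_i)^{\beta_i}\,s^{\ell}$ and note that the top term is $w^\beta s^{|\beta|+\ell}$.

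For the second claim, $\Gamma_P$ has degree at most $D$ because it is obtained from a degree-$D$ polynomial by setting $U=1$. The main point, and the only genuine step, is nonvanishing: dehomogenization is injective on homogeneous polynomials. Write
\[
 P^{\mathrm{top}}=\sum_{|\beta|+\ell=D}c_{\beta,\ell}\mathbf Z^\beta U^\ell .
\]
Setting $U=1$ gives $\sum c_{\beta,\ell}\mathbf Z^\beta$. Distinct pairs $(\beta,\ell)$ with $|\beta|+\ell=D$ have distinct $\beta$, since $\ell=D-|\beta|$ is determined by $\beta$. So no cancellation occurs, and the coefficients of $\Gamma_P$ are exactly the nonzero $c_{\beta,\ell}$. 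Equivalently, $P^{\mathrm{top}}(\mathbf Z,U)=U^D\Gamma_P(\mathbf Z/U)$ recovers $P^{\mathrm{top}}$. Since $P^{\mathrm{top}}\neq0$, the polynomial $\Gamma_P$ is a nonzero formal polynomial.

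I expect no real obstacle; the care needed is simply to keep everything at the level of formal polynomials rather than functions on $\Fq$.
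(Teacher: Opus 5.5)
Your proposal is correct and follows the paper's proof essentially step for step. Lower-degree homogeneous parts cannot reach $s^D$, each top-degree monomial contributes $c_{\beta,\ell}w^\beta$, and nonvanishing of $\Gamma_P$ holds because $\ell=D-|\beta|$ is determined by $\beta$, so setting $U=1$ causes no cancellation.
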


\begin{proof}
Every homogeneous part of $P$ of degree strictly less than $D$ contributes only powers of $s$ strictly below $D$ after the substitution
$(\mathbf Z,U)=(\rho+sw,s)$. Write
\[
 P^{\mathrm{top}}(\mathbf Z,U)
 =
 \sum_{|\delta|+j=D}c_{\delta,j}\mathbf Z^\delta U^j.
\]
Here $|\delta|$ means the sum of coordinates of $\delta$. 

For the term
\[
 c_{\delta,j}(\rho+sw)^\delta s^j,
\]
the coefficient of $s^{|\delta|+j}=s^D$ is
$c_{\delta,j}w^\delta$. Summing over \((\delta,j)\), the coefficient of \(s^D\) in \(P(\rho+sw,s)\) is
\[
 \sum_{|\delta|+j=D}c_{\delta,j}w^\delta
 =P^{\mathrm{top}}(w,1)
 =\Gamma_P(w).
\]
In particular, this coefficient is independent of $\rho$.

Since \(P^{\mathrm{top}}\) is a nonzero homogeneous polynomial of
degree \(D\), every monomial occurring in \(P^{\mathrm{top}}\) is
uniquely of the form \(\mathbf Z^\delta U^{D-|\delta|}\). Hence,
\[
 P^{\mathrm{top}}(\mathbf Z,U)
 =
 \sum_{\substack{\delta\in\mathbb N^r\\|\delta|\leq D}}
 a_\delta \mathbf Z^\delta U^{D-|\delta|},
\]
where \(a_\delta\in\Fq\) and the coefficients \(a_\delta\) are not all
zero. Setting \(U=1\), we obtain
\[
 \Gamma_P(\mathbf Z)
 =
 P^{\mathrm{top}}(\mathbf Z,1)
 =
 \sum_{\substack{\delta\in\mathbb N^r\\|\delta|\leq D}}
 a_\delta\mathbf Z^\delta.
\]
Since the monomials \(\mathbf Z^\delta\) corresponding to distinct
multi-indices \(\delta\) are distinct, no cancellation can occur in
this formal polynomial. Therefore,
\(\Gamma_P\not\equiv 0\), and \(\deg\Gamma_P\leq D\).
\end{proof}

\subsection{The rich-direction estimate}
In this subsection, we prove the following result.
\begin{proposition}
\label{prop:polynomial-rich-line}
Let $E\subset\HH_n(\Fq)$, and let $\lambda\geq1$.  Then
\begin{equation}\label{eq:polynomial-rich-line}
 |\widetilde\Omega_\lambda(E)|
 \lesssim_n
 q^{r-1}
 \max\left\{
 |E|^{1/(r+1)},\frac{|E|}{\lambda^r}
 \right\}.
\end{equation}
\end{proposition}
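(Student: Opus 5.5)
We combine the horizontal interpolation lemma with a vanishing-on-lines argument and the Schwartz--Zippel lemma applied to the leading form $\Gamma_P$. We may assume $E\neq\varnothing$, since otherwise $\widetilde\Omega_\lambda(E)=\varnothing$ for $\lambda\geq1$. Put $N=|E|$. The first step is to choose the multiplicity $m$ so that $m\lambda$ exceeds the degree bound of \cref{lem:horizontal-interpolation}, i.e.
\[
 m\lambda> C_{\mathrm{int}}N^{1/(r+1)}m^{r/(r+1)},
 \quad\text{equivalently}\quad
 m^{1/(r+1)}>C_{\mathrm{int}}N^{1/(r+1)}\lambda^{-1}.
\]
We take $m:=\max\{1,\lceil (2C_{\mathrm{int}})^{r+1}N\lambda^{-(r+1)}\rceil\}$. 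Then \cref{lem:horizontal-interpolation} gives a nonzero $P$ with horizontal multiplicity at least $m$ on $E$ and $D:=\deg P\leq C_{\mathrm{int}}N^{1/(r+1)}m^{r/(r+1)}<m\lambda$. The choice of $m$ has two regimes. If $N\lambda^{-(r+1)}\lesssim_n1$, then $m\approx1$ and $D\lesssim_n N^{1/(r+1)}$. Otherwise $m\approx_n N\lambda^{-(r+1)}$, and
\[
 D\lesssim_n N^{1/(r+1)}(N\lambda^{-(r+1)})^{r/(r+1)}=N\lambda^{-r}.
\]
In either case $D\lesssim_n\max\{N^{1/(r+1)},N\lambda^{-r}\}$.

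Next, fix $w\in\widetilde\Omega_\lambda(E)$ and a $\rho$ with $\sigma(\rho,w)=1$ such that $|L_w(\rho)\cap E|\geq\lambda$. The points of $L_w(\rho)\cap E$ correspond to distinct parameters $s_0$, because the central coordinate equals $s$. By \cref{lem:line-multiplicity}, $R_{\rho,w}$ is divisible by $(s-s_0)^m$ for at least $\lambda$ distinct $s_0$. Since these factors are pairwise coprime, $R_{\rho,w}$ is divisible by a polynomial of degree at least $m\lambda>D\geq\deg R_{\rho,w}$. Hence $R_{\rho,w}\equiv0$ as a formal polynomial, and in particular its $s^D$ coefficient vanishes. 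By \cref{lem:leading-term-on-line}, this gives $\Gamma_P(w)=0$. Therefore $\widetilde\Omega_\lambda(E)$ is contained in the zero set in $V=\Fq^r$ of the nonzero polynomial $\Gamma_P$, which has degree at most $D$.

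Finally, the Schwartz--Zippel lemma bounds the number of zeros of $\Gamma_P$ in $\Fq^r$ by $Dq^{r-1}$, which yields \eqref{eq:polynomial-rich-line}. The main obstacle is the degree regime $D\geq q$, where Schwartz--Zippel is vacuous. This case is harmless, however: the trivial bound $|\widetilde\Omega_\lambda(E)|\leq q^r\leq Dq^{r-1}$ still gives the claim. The only other point to check is that the vanishing must be formal rather than functional, so that \cref{lem:leading-term-on-line} applies. This is guaranteed because the divisibility in \cref{lem:line-multiplicity} holds in $\Fq[s]$ and the degree comparison is made formally.
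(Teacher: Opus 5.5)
Your proof is correct and follows the paper's argument essentially step for step. You use horizontal interpolation with $m\approx_n\max\{1,|E|\lambda^{-(r+1)}\}$ to force $D<m\lambda$, deduce that $R_{\rho,w}\equiv0$ on each rich normalized line, read off $\Gamma_P(w)=0$ from the $s^D$ coefficient, and conclude with Schwartz--Zippel. The only differences are cosmetic: your explicit constant $(2C_{\mathrm{int}})^{r+1}$ replaces the paper's $C_{\mathrm{gap}}$, and you use coprime divisibility where the paper counts root multiplicities.
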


The term $q^{r-1}|E|^{1/(r+1)}$ in
\eqref{eq:polynomial-rich-line} prevents a direct deduction of
\eqref{eq:rich-direction-bound} when $|E|<\lambda^{r+1}$.  We avoid its effect by a density-amplification argument.  Starting from a nonempty rich-direction set $\Omega_\lambda(E)$,
\cref{lem:covering-direction-set} provides
$O(q^r/|\Omega_\lambda(E)|)$ affine symplectic images whose rich
directions cover a positive proportion of the affine chart.  By covariance, the union $E_{\mathrm{cov}}$ of the corresponding images of $E$ is $\lambda$-rich in all these directions.  Applying
\cref{prop:polynomial-rich-line} to $E_{\mathrm{cov}}$, the first term gives $|E_{\mathrm{cov}}|\gtrsim_n q^{r+1}\geq q\lambda^r$, while the second gives $|E_{\mathrm{cov}}|\gtrsim_n q\lambda^r$ directly; thus, the additional term is harmless in the positive-density regime. Since $|E_{\mathrm{cov}}|\lesssim q^r|E|/|\Omega_\lambda(E)|$, this yields \eqref{eq:rich-direction-bound}. The general device of passing from a sparse direction family to a dense one by superimposing random symmetry images appears in \cite[Proposition~2.5 and Remark~3.2]{EOT10}; a particularly close set-level random-rotation argument appears in
\cite[Proposition~6]{Salvatore23}. The details are given in
\cref{sec:covering-directions,sec:global-rich-direction}.

To prove this proposition, we first record two elementary facts that will be used to convert vanishing on rich
horizontal lines into a bound for the number of their refined directions. The first statement is clear from the degree of the polynomial, and the second is the Schwartz--Zippel
lemma~\cite{Schwartz80, Zippe79}.

\begin{lemma}
\label{lem:polynomial-zero-bounds}
Let $\mathbb K$ be a field, and let $d\geq1$.
\begin{enumerate}[label=\textup{(\roman*)}]
\item If $f\in\mathbb K[X]$ is nonzero, then the sum of its Hasse
multiplicities at distinct points of $\mathbb K$ is at most $\deg f$.
\item If $H\in\Fq[X_1,\ldots,X_d]$ is nonzero and has total degree at most $D$, then
\[
 |\{x\in\Fq^d:H(x)=0\}|\leq Dq^{d-1}.
\]
\end{enumerate}
\end{lemma}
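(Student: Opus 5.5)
Both parts are classical, and I would prove them directly rather than cite them. Part (i) reduces to unique factorization in $\mathbb K[X]$. Part (ii) follows by induction on the number of variables $d$, using part (i) in each fibre.

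For (i), I would first check that, for a nonzero $f\in\mathbb K[X]$ and $a\in\mathbb K$, having a zero of Hasse multiplicity at least $m$ at $a$ is equivalent to $(X-a)^m\mid f$. This is exactly the reformulation already used in the proof of \cref{lem:line-multiplicity}. Write the Hasse--Taylor expansion $f(a+h)=\sum_j f^{[j]}(a)h^j$ as a formal identity. The coefficients of $h^j$ vanish for all $j<m$ precisely when $h^m\mid f(a+h)$. Substituting $h=X-a$ gives the divisibility. Now let $a_1,\dots,a_k$ be distinct points with multiplicities $m_1,\dots,m_k$. The polynomials $(X-a_i)^{m_i}$ are pairwise coprime, since they are powers of distinct monic irreducibles. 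Hence their product divides $f$, and comparing degrees gives $\sum_i m_i\le\deg f$.

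For (ii), I would argue by induction on $d$. The case $d=1$ is the special case of (i) with all multiplicities equal to $1$: a nonzero $H$ of degree at most $D$ has at most $D=Dq^{0}$ roots. For $d\ge2$, expand
\[
 H=\sum_{j=0}^{k}H_j(X_1,\dots,X_{d-1})X_d^{\,j},
\]
where $k$ is the largest exponent of $X_d$ that occurs, so $H_k\neq0$ and $\deg H_k\le D-k$. Split the zeros $x=(x',x_d)$ of $H$ into two classes.
\begin{enumerate}[label=\textup{(\alph*)}]
\item Zeros with $H_k(x')=0$. By the induction hypothesis there are at most $(D-k)q^{d-2}$ such $x'$, each with at most $q$ choices of $x_d$. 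This contributes at most $(D-k)q^{d-1}$ zeros.
\item Zeros with $H_k(x')\neq0$. For each such $x'$, the polynomial $H(x',X_d)$ is a nonzero univariate polynomial of degree $k$, so it has at most $k$ roots. Since there are at most $q^{d-1}$ choices of $x'$, this contributes at most $kq^{d-1}$ zeros.
\end{enumerate}
Adding the two contributions gives the bound $Dq^{d-1}$. When $k=0$, class (b) is empty and $H=H_0$ is a nonzero polynomial in $d-1$ variables, so class (a) alone gives the bound.

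The only delicate point is in (i): Hasse multiplicity must be identified with divisibility by $(X-a)^m$ formally, not by evaluating ordinary derivatives. Ordinary derivatives fail in characteristic $p$, because high-order derivatives vanish identically, and that is why Hasse derivatives are used. Once the formal Taylor identity is in place, the rest is routine degree bookkeeping. No restriction such as $D<q$ is needed in (ii), since for $D\ge q$ the bound exceeds $q^d$ and holds trivially.
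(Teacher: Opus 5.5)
Your proof is correct and takes the same route as the paper. The paper treats (i) as immediate from the degree and cites the Schwartz--Zippel lemma for (ii); you write out the standard arguments behind those citations, namely divisibility by $(X-a)^m$ via the formal Hasse--Taylor identity for (i), and induction on $d$ through the leading coefficient in $X_d$ for (ii).
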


\begin{proof}[Proof of Proposition \ref{prop:polynomial-rich-line}]
The result is immediate if $E=\varnothing$. It is also
immediate if $\lambda>\min\{q,|E|\}$, since every horizontal line has $q$ points and contains at most $|E|$ points of $E$. We may therefore assume that $|E|\geq1$ and $1\leq\lambda\leq\min\{q,|E|\}$.

Let $C_{\mathrm{int}}$ be the constant in \cref{lem:horizontal-interpolation}. Choose $C_{\mathrm{gap}}=C_{\mathrm{gap}}(n)\geq1$ so large that
\begin{equation}\label{eq:multiplicity-gap-constant}
 C_{\mathrm{int}}C_{\mathrm{gap}}^{-1/(r+1)}<1,
\end{equation}
and put
\[
 m_0
 :=
 \max\left\{1,\frac{|E|}{\lambda^{r+1}}\right\},
 \qquad
 m:=\lceil C_{\mathrm{gap}}m_0\rceil.
\]
By \cref{lem:horizontal-interpolation}, there is a nonzero polynomial $P\in\Fq[\mathbf Z,U]$ such that
\begin{equation}\label{eq:interpolating-multiplicity}
 \operatorname{mult}^{\mathrm{hor}}_p(P)\geq m
 \qquad(p\in E)
\end{equation}
and, writing $D:=\deg P$,
\begin{equation}\label{eq:initial-rich-polynomial-degree}
 D
 \leq
 C_{\mathrm{int}}|E|^{1/(r+1)}m^{r/(r+1)}.
\end{equation}
Since $m\geq1$ and $E\neq\varnothing$, these conditions force $P$ to vanish at every point of $E$, so $D\geq1$.

Since $m\geq C_{\mathrm{gap}}|E|/\lambda^{r+1}$, we have
\begin{align*}
 \frac{D}{m\lambda}
 &\leq
 C_{\mathrm{int}}\frac{|E|^{1/(r+1)}m^{r/(r+1)}}{m\lambda}\\
 &=C_{\mathrm{int}}\left(\frac{|E|}{m\lambda^{r+1}}\right)^{1/(r+1)}\\
 &\leq C_{\mathrm{int}}C_{\mathrm{gap}}^{-1/(r+1)}
 <1.
\end{align*}
Thus
\begin{equation}\label{eq:degree-multiplicity-gap}
 D<m\lambda.
\end{equation}

We also record the resulting degree bound. Since $m_0\geq1$,
\[
 m\leq C_{\mathrm{gap}}m_0+1\leq(C_{\mathrm{gap}}+1)m_0,
\]
and therefore
\[
 D\lesssim_n |E|^{1/(r+1)}m_0^{r/(r+1)}.
\]
If $|E|\leq\lambda^{r+1}$, then $m_0=1$ and
$D\lesssim_n|E|^{1/(r+1)}$. If $|E|>\lambda^{r+1}$, then
$m_0=|E|/\lambda^{r+1}$ and
\[
 |E|^{1/(r+1)}m_0^{r/(r+1)}
 =
 |E|^{1/(r+1)}
 \left(\frac{|E|}{\lambda^{r+1}}\right)^{r/(r+1)}
 =
 \frac{|E|}{\lambda^r}.
\]
Consequently,
\begin{equation}\label{eq:final-rich-polynomial-degree}
 D
 \lesssim_n
 \max\left\{|E|^{1/(r+1)},\frac{|E|}{\lambda^r}\right\}.
\end{equation}

Fix $w\in \widetilde\Omega_\lambda(E)$. By definition, there is $\rho_w\in V$ such that
\[
 \sigma(\rho_w,w)=1,
 \qquad
 |L_w(\rho_w)\cap E|\geq\lambda.
\]
Let
\[
 \mathcal A_w
 :=
 \{s\in\Fq:(\rho_w+sw,s)\in E\}.
\]
The normalized parametrization gives
\[
 |\mathcal A_w|=|L_w(\rho_w)\cap E|\geq\lambda.
\]
Consider
\[
 R_{\rho_w,w}(s):=P(\rho_w+sw,s).
\]
For every $s_0\in\mathcal A_w$, the point $(\rho_w+s_0w,s_0)$ belongs to $E$.
By \cref{lem:line-multiplicity} and
\eqref{eq:interpolating-multiplicity}, the
polynomial $R_{\rho_w,w}$ has a zero of Hasse multiplicity at least $m$ at $s_0$. If $R_{\rho_w,w}$ were nonzero,
\cref{lem:polynomial-zero-bounds} (i) would give
\[
 m|\mathcal A_w|
 \leq
 \sum_{s_0\in\mathcal A_w}\operatorname{mult}_{s_0}(R_{\rho_w,w})
 \leq
 \deg R_{\rho_w,w}
 \leq D.
\]
On the other hand,
\[
 m|\mathcal A_w|\geq m\lambda>D
\]
by \eqref{eq:degree-multiplicity-gap}.  This contradiction proves
that $R_{\rho_w,w}$ is identically zero.

Let $\Gamma_P(\mathbf Z):=P^{\mathrm{top}}(\mathbf Z,1)$, as in
\cref{lem:leading-term-on-line}. That lemma shows that $\Gamma_P$ is a nonzero formal polynomial of degree at most $D$, and that $\Gamma_P(w)$ is the coefficient of $s^D$ in $R_{\rho_w,w}(s)$. Since $R_{\rho_w,w}$ is identically zero, this coefficient vanishes. We have shown
that
\begin{equation}\label{eq:rich-vectors-zero-set}
 \widetilde\Omega_\lambda(E)
 \subset
 \{w\in V:\Gamma_P(w)=0\}.
\end{equation}

\cref{lem:polynomial-zero-bounds} (ii) gives
\[
 |\widetilde\Omega_\lambda(E)|\leq Dq^{r-1}.
\]
Combining this with \eqref{eq:final-rich-polynomial-degree} proves
\eqref{eq:polynomial-rich-line}.
\end{proof}

\begin{corollary}
\label{cor:positive-proportion-directions}
There is a constant $c_n>0$ with the following property.  Let
$E\subset\HH_n(\Fq)$ and $1\leq\lambda\leq q$.  If
\begin{equation}\label{eq:positive-proportion-hypothesis}
 |\widetilde\Omega_\lambda(E)|\geq\frac12q^r,
\end{equation}
then
\begin{equation}\label{eq:positive-proportion-conclusion}
 |E|\geq c_nq\lambda^r.
\end{equation}
\end{corollary}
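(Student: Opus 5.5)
The plan is to plug the hypothesis \eqref{eq:positive-proportion-hypothesis} directly into \cref{prop:polynomial-rich-line}. That proposition gives a constant $C_n$ with
\[
 \tfrac12 q^r\leq|\widetilde\Omega_\lambda(E)|
 \leq C_nq^{r-1}\max\left\{|E|^{1/(r+1)},\frac{|E|}{\lambda^r}\right\}.
\]
Dividing by $q^{r-1}$ yields $q\leq 2C_n\max\{|E|^{1/(r+1)},|E|\lambda^{-r}\}$. We note first that $E\neq\varnothing$, since otherwise $\widetilde\Omega_\lambda(E)$ is empty, contradicting the hypothesis (as $q^r/2>0$). The argument then splits according to which term attains the maximum.

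\emph{Case 1: the second term dominates.} Then $q\leq 2C_n|E|\lambda^{-r}$, which rearranges to $|E|\geq(2C_n)^{-1}q\lambda^r$. This is the desired conclusion.

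\emph{Case 2: the first term dominates.} Then $q\leq 2C_n|E|^{1/(r+1)}$, so $|E|\geq(2C_n)^{-(r+1)}q^{r+1}$. Since $\lambda\leq q$, we have $q^{r+1}=q\cdot q^r\geq q\lambda^r$, and therefore $|E|\geq(2C_n)^{-(r+1)}q\lambda^r$.

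Taking $c_n:=\min\{(2C_n)^{-1},(2C_n)^{-(r+1)}\}$ covers both cases. There is no real obstacle: the only point needing care is the second case, where the hypothesis $\lambda\leq q$ is exactly what converts the density bound $|E|\gtrsim q^{r+1}$ into $|E|\gtrsim q\lambda^r$. One should also keep track that $C_n$ depends only on $n$, which holds because \cref{prop:polynomial-rich-line} is uniform in $q$, $E$ and $\lambda$.
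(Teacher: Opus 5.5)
Your proposal is correct and follows the paper's proof essentially line for line. Both insert the hypothesis into \cref{prop:polynomial-rich-line}, split according to which term of the maximum dominates, and use $\lambda\leq q$ to turn $|E|\gtrsim_n q^{r+1}$ into $|E|\gtrsim_n q\lambda^r$, ending with the same constants $(2C_n)^{-1}$ and $(2C_n)^{-(r+1)}$.
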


\begin{proof}
Let $C_{\mathrm{rich}}=C_{\mathrm{rich}}(n)\geq1$ be a constant for which
\cref{prop:polynomial-rich-line} gives
\[
 |\widetilde\Omega_\lambda(E)|
 \leq
 C_{\mathrm{rich}}q^{r-1}
 \max\left\{
 |E|^{1/(r+1)},\frac{|E|}{\lambda^r}
 \right\}.
\]
Combining this with
$|\widetilde\Omega_\lambda(E)|\geq q^r/2$ yields
\[
 \max\left\{
 |E|^{1/(r+1)},\frac{|E|}{\lambda^r}
 \right\}
 \geq
 \frac{q}{2C_{\mathrm{rich}}}.
\]
If
\[
 |E|^{1/(r+1)}
 \geq
 \frac{q}{2C_{\mathrm{rich}}},
\]
then
\[
 |E|
 \geq
 (2C_{\mathrm{rich}})^{-(r+1)}q^{r+1}
 \geq
 (2C_{\mathrm{rich}})^{-(r+1)}q\lambda^r,
\]
where the last inequality follows from $\lambda\leq q$.  Otherwise,
\[
 \frac{|E|}{\lambda^r}
 \geq
 \frac{q}{2C_{\mathrm{rich}}},
\]
and hence
\[
 |E|
 \geq
 (2C_{\mathrm{rich}})^{-1}q\lambda^r.
\]
The conclusion follows by choosing $c_n>0$ smaller than both constants.
\end{proof}

\section{Covering directions by affine symplectic transformations}
\label{sec:covering-directions}

The estimate in the preceding section applies when a positive proportion of the directions in the affine chart $\Dn^{\mathrm{aff}}$ are rich. An arbitrary family of rich directions need not have this property. We now use transformations that preserve horizontal lines to reduce the general case to the affine-chart estimate. First we show that these transformations act transitively on $\Dn$. We then use a probabilistic argument to move copies of any nonempty direction set so that their union covers a positive proportion of $\Dn^{\mathrm{aff}}$.

Let
\[
 \Sp(V,\sigma)
 :=
 \{S\in\operatorname{GL}(V):
   \sigma(Sz,Sw)=\sigma(z,w)\text{ for all }z,w\in V\}.
\]
For $b\in V$, $\tau\in\Fq$, and $S\in\Sp(V,\sigma)$, define
\begin{equation}\label{eq:affine-symplectic-map}
 g_{b,\tau,S}(z,t)
 :=(b,\tau)\cdot(Sz,t)
 =\bigl(b+Sz,\tau+t+\sigma(b,Sz)\bigr).
\end{equation}
The first equality makes the construction transparent: we first apply the symplectic automorphism $(z,t)\mapsto(Sz,t)$ and then translate on the left by $(b,\tau)$. The first map is a group automorphism, and left translation is a bijection; both preserve horizontal lines. These transformations form the finite group
\[
 \mathcal G_n
 :=
 \{g_{b,\tau,S}:b\in V,\ \tau\in\Fq,\ S\in\Sp(V,\sigma)\}.
\]
A direct calculation from \eqref{eq:affine-symplectic-map} gives
\begin{equation}\label{eq:symmetry-composition}
 g_{b,\tau,S}\circ g_{b',\tau',S'}
 =
 g_{b+Sb',\,\tau+\tau'+\sigma(b,Sb'),\,SS'}.
\end{equation}
The identity is $g_{0,0,\operatorname{Id}_V}$.  Using
\eqref{eq:symmetry-composition} and $\sigma(b,b)=0$, we also obtain
\[
 g_{b,\tau,S}^{-1}
 =
 g_{-S^{-1}b,-\tau,S^{-1}}.
\]

\begin{lemma}
\label{lem:symmetry-action}
Every $g=g_{b,\tau,S}\in\mathcal G_n$ maps horizontal lines to
horizontal lines and induces the action
\begin{equation}\label{eq:direction-action}
 g\cdot[w:c]=[Sw:c+\sigma(b,Sw)]
\end{equation}
on $\Dn$. This action is well defined and transitive. Moreover, for
every function $F:\HH_n(\Fq)\to\C$ and every $\vartheta\in\Dn$,
\begin{equation}\label{eq:maximal-covariance}
 \Mrd(F\circ g^{-1})(g\cdot\vartheta)=\Mrd F(\vartheta).
\end{equation}
In particular, for every $E\subset\HH_n(\Fq)$,
\[
 \Mrd\one_{g(E)}(g\cdot\vartheta)=\Mrd\one_E(\vartheta).
\]
\end{lemma}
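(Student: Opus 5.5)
The plan is to compute the image of a single horizontal line under $g=g_{b,\tau,S}$, read off its refined direction, and then deduce well-definedness, the group-action property, transitivity and covariance from that formula.

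\emph{Image of a line.} Take $L=L_{p,w}$ with $p=(z_0,t_0)$. Using $\sigma(Sz,Sw)=\sigma(z,w)$, the image of the point with parameter $s$ is
\[
 g(z_0+sw,\,t_0+s\sigma(z_0,w))
 =\bigl(b+Sz_0+sSw,\ \tau+t_0+\sigma(b,Sz_0)+s[\sigma(z_0,w)+\sigma(b,Sw)]\bigr).
\]
Put $z_0'=b+Sz_0$. Bilinearity gives
\[
 \sigma(z_0',Sw)=\sigma(b,Sw)+\sigma(Sz_0,Sw)=\sigma(b,Sw)+\sigma(z_0,w).
\]
Hence $g(L)=L_{p',Sw}$ with $p'=g(p)$, so $g(L)$ is horizontal. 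Its refined direction is
\[
 [Sw:\sigma(z_0,w)+\sigma(b,Sw)]=[Sw:c+\sigma(b,Sw)], \qquad c=\sigma(z_0,w).
\]
The formula is homogeneous of degree one in $(w,c)$, so it descends to projective points. Since $S$ is invertible, $Sw\neq0$, so the image lies in $\Dn$. Because every $\vartheta\in\Dn$ is the direction of some horizontal line, the direction of $g(L)$ depends only on $\Dir(L)$. The action property follows from the composition rule \eqref{eq:symmetry-composition}, or simply from $(g\circ g')(L)=g(g'(L))$.

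\emph{Transitivity.} It suffices to send every $[w:c]$ to one fixed direction $[e:0]$ with $e\neq0$.
\begin{enumerate}
 \item Since $\Sp(V,\sigma)$ acts transitively on $V\setminus\{0\}$, choose $S$ with $Sw=e$. The standard symplectic basis argument gives this: any nonzero vector extends to a symplectic basis.
 \item Choose $b$ with $\sigma(b,e)=-c$. This is possible because $\sigma(\cdot,e)$ is a nonzero linear functional.
\end{enumerate}
Then $g_{b,0,S}\cdot[w:c]=[e:0]$. The only nontrivial ingredient is the symplectic basis extension. I expect this to be the main point needing care, but it is classical, and over any field one can instead extend $w$ to a symplectic basis directly by Witt's theorem.

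\emph{Covariance.} Because $g$ is a bijection of $\HH_n(\Fq)$ mapping horizontal lines to horizontal lines, $L\mapsto g(L)$ is a bijection from the lines with direction $\vartheta$ onto the lines with direction $g\cdot\vartheta$. Its inverse is induced by $g^{-1}$, which also lies in $\mathcal G_n$. Moreover,
\[
 \sum_{p'\in g(L)}|F(g^{-1}p')|=\sum_{p\in L}|F(p)|.
\]
Taking maxima gives \eqref{eq:maximal-covariance}. The indicator case follows from $\one_E\circ g^{-1}=\one_{g(E)}$.
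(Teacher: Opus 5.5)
Your proof is correct and follows essentially the same route as the paper: compute $g(L_{p,w})=L_{g(p),Sw}$ using symplecticity, read off the direction formula and its homogeneity, obtain transitivity from a symplectic-basis map $S$ combined with a translation $b$ that adjusts the central slope, and deduce covariance from the induced bijection between the lines of direction $\vartheta$ and those of direction $g\cdot\vartheta$. The only cosmetic difference is that you route transitivity through the hub $[e:0]$, which relies on the group-action property you correctly justify. The paper instead maps $[w:c]$ directly to $[w':c']$, taking $b=(c'-c)b_0$ with $\sigma(b_0,w')=1$.
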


\begin{proof}
We first compute the image of a horizontal line. Let
$p=(z_0,t_0)$ and
\[
 L_{p,w}
 =
 \{(z_0+sw,t_0+s\sigma(z_0,w)):s\in\Fq\}.
\]
For every $s\in\Fq$,
\begin{align*}
 &g\bigl(z_0+sw,t_0+s\sigma(z_0,w)\bigr)\\
 &\quad=
 \bigl(
 b+Sz_0+sSw,\,
 \tau+t_0+\sigma(b,Sz_0)
 +s[\sigma(z_0,w)+\sigma(b,Sw)]
 \bigr).
\end{align*}
Since $S$ is symplectic,
\begin{align*}
 \sigma(z_0,w)+\sigma(b,Sw)
 &=
 \sigma(Sz_0,Sw)+\sigma(b,Sw)\\
 &=
 \sigma(b+Sz_0,Sw).
\end{align*}
The displayed points therefore form the horizontal line through
$g(p)$ with spatial direction $Sw$. In other words,
\begin{equation}\label{eq:image-horizontal-line}
 g(L_{p,w})=L_{g(p),Sw}.
\end{equation}
If $\Dir(L_{p,w})=[w:c]$, then $c=\sigma(z_0,w)$, so
\eqref{eq:image-horizontal-line} gives
\[
 \Dir\bigl(g(L_{p,w})\bigr)
 =
 [Sw:c+\sigma(b,Sw)].
\]
This proves \eqref{eq:direction-action}.

Replacing $(w,c)$ by $\chi(w,c)$, with $\chi\in\Fq^\times$, multiplies
both coordinates on the right-hand side of \eqref{eq:direction-action} by $\chi$. Thus, the action is well defined on projective classes. Since $Sw\neq0$, its image belongs to $\Dn$.
The central translation parameter $\tau$ does not affect the direction.

We next prove transitivity.  Let $[w:c],[w':c']\in\Dn$. Choose
symplectic bases beginning with $w$ and $w'$, respectively. The linear
map sending the first basis to the second is an element
$S\in\Sp(V,\sigma)$ satisfying $Sw=w'$. By nondegeneracy, the linear functional
\[
 b\longmapsto\sigma(b,w')
\]
is nonzero and hence surjective onto $\Fq$. Choose $b_0\in V$ with
$\sigma(b_0,w')=1$ and set $b=(c'-c)b_0$. Then
\[
 g_{b,0,S}\cdot[w:c]
 =
 [w':c+\sigma(b,w')]
 =
 [w':c'].
\]

Finally, we prove covariance. As $L$ ranges over the horizontal lines
of direction $\vartheta$, \eqref{eq:image-horizontal-line} shows that $g(L)$ ranges bijectively over the horizontal lines of direction
$g\cdot\vartheta$. Hence,
\begin{align*}
 \Mrd(F\circ g^{-1})(g\cdot\vartheta)
 &=
 \max_{\substack{L'\ \mathrm{horizontal}\\
                  \Dir(L')=g\cdot\vartheta}}
 \sum_{p'\in L'}|F(g^{-1}(p'))|\\
 &=
 \max_{\substack{L\ \mathrm{horizontal}\\
                  \Dir(L)=\vartheta}}
 \sum_{p'\in g(L)}|F(g^{-1}(p'))|\\
 &=
 \max_{\substack{L\ \mathrm{horizontal}\\
                  \Dir(L)=\vartheta}}
 \sum_{p\in L}|F(p)|\\
 &=
 \Mrd F(\vartheta).
\end{align*}
This proves \eqref{eq:maximal-covariance}. Taking $F=\one_E$ gives
$F\circ g^{-1}=\one_{g(E)}$ and proves the final assertion.
\end{proof}

For a direction set $\Omega\subset\Dn$, we use
\[
 g\cdot\Omega
 :=\{g\cdot\vartheta:\vartheta\in\Omega\}
\]
for its image under the induced action on $\Dn$.

Recall from \eqref{eq:affine-chart-directions} and
\eqref{eq:affine-chart-cardinality} that
\[
 \Dn^{\mathrm{aff}}
 =
 \{[w:1]:w\in V\setminus\{0\}\},
 \qquad
 |\Dn^{\mathrm{aff}}|=q^r-1.
\]
We first state the covering argument for an arbitrary set of directions;
this form will also be used for weighted functions.

\begin{lemma}
\label{lem:covering-direction-set}
Let $\Omega\subset\Dn$ be nonempty.  There are affine symplectic
transformations $g_1,\ldots,g_J$ such that
\[
 J\lesssim\frac{q^r}{|\Omega|}
\]
and
\[
 \left|
 \Dn^{\mathrm{aff}}\cap\bigcup_{j=1}^Jg_j\cdot\Omega
 \right|
 \geq\frac12q^r.
\]
\end{lemma}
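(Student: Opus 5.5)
We use a first-moment argument with random elements of $\mathcal G_n$. Since $\mathcal G_n$ acts transitively on $\Dn$ (\cref{lem:symmetry-action}), if $g$ is chosen uniformly at random from $\mathcal G_n$, then for every fixed $\vartheta\in\Omega$ the image $g\cdot\vartheta$ is uniformly distributed on $\Dn$. This holds because the stabilizers are conjugate, so each fiber of the orbit map $g\mapsto g\cdot\vartheta$ has size $|\mathcal G_n|/|\Dn|$. Equivalently, for each fixed $\eta\in\Dn$,
\[
 \PP\bigl(\eta\in g\cdot\Omega\bigr)=\PP\bigl(g^{-1}\cdot\eta\in\Omega\bigr)=\frac{|\Omega|}{|\Dn|},
\]
since $g^{-1}$ is also uniform on $\mathcal G_n$ and $g^{-1}\cdot\eta$ is therefore uniform on $\Dn$.

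Now take $g_1,\ldots,g_J$ independent and uniform. For each $\eta\in\Dn^{\mathrm{aff}}$, independence gives
\[
 \PP\Bigl(\eta\notin\bigcup_{j}g_j\cdot\Omega\Bigr)=\Bigl(1-\frac{|\Omega|}{|\Dn|}\Bigr)^J\leq\exp\Bigl(-\frac{J|\Omega|}{|\Dn|}\Bigr).
\]
We use $|\Dn|=q+\cdots+q^r\leq 2q^r$, valid because $q\geq3$. We then choose $J=\lceil 2q^r\log 8/|\Omega|\rceil$. Since $|\Omega|\leq|\Dn|\leq 2q^r$, the right-hand side $J\lesssim q^r/|\Omega|$ is at least a constant, so the ceiling costs only a constant factor. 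With this choice the miss probability is at most $1/8$.

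By linearity of expectation, the expected number of points of $\Dn^{\mathrm{aff}}$ that are not covered is at most $|\Dn^{\mathrm{aff}}|/8$. Some realization therefore covers at least $\tfrac78(q^r-1)$ points of $\Dn^{\mathrm{aff}}$, using \eqref{eq:affine-chart-cardinality}. Finally, $\tfrac78(q^r-1)\geq\tfrac12q^r$ holds because $q^r\geq 9$.

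The only step needing care is the uniformity claim, namely that the orbit map of a transitive action of a finite group pushes the uniform measure on the group forward to the uniform measure on the orbit. This is standard orbit–stabilizer counting. The remaining steps are elementary bookkeeping of constants, with $J$ independent of $n$ as the statement requires.
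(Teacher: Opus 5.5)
Your proposal is correct and is essentially the paper's proof: independent uniform elements of $\mathcal G_n$, transitivity giving $\Pr(g^{-1}\cdot\eta\in\Omega)=|\Omega|/|\Dn|$, independence, and a first-moment count over $\Dn^{\mathrm{aff}}$. The only difference is in the constants: the paper takes $J=\lceil 2|\Dn|/|\Omega|\rceil$, so each point is missed with probability at most $e^{-2}$, whereas you take $J=\lceil 2q^r\log 8/|\Omega|\rceil$ and get miss probability at most $1/8$.
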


\begin{proof}
By \eqref{eq:cardinalities},
\[
 q^r\leq|\Dn|<2q^r.
\]
Set
\[
 J:=\left\lceil\frac{2|\Dn|}{|\Omega|}\right\rceil
\]
and choose $g_1,\ldots,g_J$ independently and uniformly from
$\mathcal G_n$.

Fix $\vartheta_0\in\Dn$. Transitivity implies that
$g_j^{-1}\cdot\vartheta_0$ is uniformly distributed on $\Dn$, because
all fibers of the orbit map have the same cardinality. Therefore,
\[
 \Pr\bigl(\vartheta_0\in g_j\cdot\Omega\bigr)
 =
 \Pr\bigl(g_j^{-1}\cdot\vartheta_0\in\Omega\bigr)
 =
 \frac{|\Omega|}{|\Dn|}.
\]
By independence,
\begin{align*}
 \Pr\left(
 \vartheta_0\notin\bigcup_{j=1}^Jg_j\cdot\Omega
 \right)
 &=
 \left(1-\frac{|\Omega|}{|\Dn|}\right)^J\\
 &\leq
 \exp\left(-\frac{J|\Omega|}{|\Dn|}\right)
 \leq e^{-2}.
\end{align*}
Summing this probability over
$\vartheta_0\in\Dn^{\mathrm{aff}}$ gives
\[
 \mathbb E\left|
 \Dn^{\mathrm{aff}}\setminus
 \bigcup_{j=1}^Jg_j\cdot\Omega
 \right|
 \leq e^{-2}|\Dn^{\mathrm{aff}}|.
\]
Consequently, some deterministic choice of $g_1,\ldots,g_J$ satisfies
\begin{align*}
 \left|
 \Dn^{\mathrm{aff}}\cap
 \bigcup_{j=1}^Jg_j\cdot\Omega
 \right|
 &\geq(1-e^{-2})|\Dn^{\mathrm{aff}}|\\
 &=(1-e^{-2})(q^r-1)\\
 &\geq\frac12q^r,
\end{align*}
where the last inequality follows from $q^r\geq4$.  Finally, since
$|\Omega|\leq|\Dn|$,
\[
 J
 \leq\frac{2|\Dn|}{|\Omega|}+1
 \leq\frac{3|\Dn|}{|\Omega|}
 <\frac{6q^r}{|\Omega|}.
\]
This completes the proof.
\end{proof}

\section{The global rich-direction estimate}
\label{sec:global-rich-direction}

\begin{proof}[Proof of \cref{thm:rich-direction-estimate}]
If $\Omega_\lambda(E)=\varnothing$, there is nothing to prove.  Since
every horizontal line has $q$ points, the level set is empty when
$\lambda>q$.

Suppose first that $0<\lambda<1$.  For every
$\vartheta\in\Omega_\lambda(E)$, choose a horizontal line
$L_\vartheta$ of direction $\vartheta$ that meets $E$.  By
\cref{lem:line-geometry}, every point belongs to at most
$|\PP^{r-1}(\Fq)|$ selected lines. Hence,
\[
 |\Omega_\lambda(E)|
 \leq\sum_{\vartheta\in\Omega_\lambda(E)}|L_\vartheta\cap E|
 \leq|\PP^{r-1}(\Fq)|\,|E|
 \lesssim q^{r-1}|E|
 \leq q^{r-1}|E|\lambda^{-r}.
\]

It remains to consider $1\leq\lambda\leq q$. Apply
\cref{lem:covering-direction-set} directly to
$\Omega_\lambda(E)$. There are affine symplectic transformations
$g_1,\ldots,g_J$ such that
\[
 J\lesssim\frac{q^r}{|\Omega_\lambda(E)|}
\]
and
\[
 \left|
 \Dn^{\mathrm{aff}}\cap
 \bigcup_{j=1}^Jg_j\cdot\Omega_\lambda(E)
 \right|
 \geq\frac12q^r.
\]
Put
\[
 E_{\mathrm{cov}}:=\bigcup_{j=1}^Jg_j(E).
\]
Suppose that $\vartheta'\in g_j\cdot\Omega_\lambda(E)$. Then
$\vartheta'=g_j\cdot\vartheta$ for some
$\vartheta\in\Omega_\lambda(E)$, and covariance gives
\begin{align*}
 \Mrd\one_{E_{\mathrm{cov}}}(\vartheta')
 &\geq\Mrd\one_{g_j(E)}(\vartheta')\\
 &=\Mrd\one_E(\vartheta)
 \geq\lambda.
\end{align*}
Under the identification
$\Dn^{\mathrm{aff}}=\{[w:1]:w\in V\setminus\{0\}\}$, it follows that
\[
 |\widetilde\Omega_\lambda(E_{\mathrm{cov}})|
 \geq\frac12q^r.
\]
\Cref{prop:polynomial-rich-line} now gives
\[
 q^r
 \lesssim_n q^{r-1}
 \max\left\{
 |E_{\mathrm{cov}}|^{1/(r+1)},
 \frac{|E_{\mathrm{cov}}|}{\lambda^r}
 \right\}.
\]
Consequently, either
\[
 |E_{\mathrm{cov}}|\gtrsim_n q^{r+1}
\]
or
\[
 |E_{\mathrm{cov}}|\gtrsim_n q\lambda^r.
\]
In the first case, $\lambda\leq q$ implies
$q^{r+1}\geq q\lambda^r$. Thus, in both cases
\[
 |E_{\mathrm{cov}}|\gtrsim_n q\lambda^r.
\]
On the other hand, every $g_j$ is a bijection, and hence
\[
 |E_{\mathrm{cov}}|
 \leq\sum_{j=1}^J|g_j(E)|
 =J|E|
 \lesssim\frac{q^r|E|}{|\Omega_\lambda(E)|}.
\]
Combining the last two estimates and rearranging yields
\[
 |\Omega_\lambda(E)|
 \lesssim_n q^{r-1}|E|\lambda^{-r},
\]
which proves \eqref{eq:rich-direction-bound}.

For sharpness, take $E=\{p_0\}$. Then $\Mrd\one_E=1$ on exactly
$|\PP^{r-1}(\Fq)|\approx q^{r-1}$ refined directions.  
\end{proof}

\section{The sharp level-set estimate for arbitrary functions}
\label{sec:function-level-set}

We next derive the corresponding level-set estimate
for arbitrary functions. For a nonnegative integer-valued function
$\nu:\HH_n(\Fq)\to\mathbb Z_{\geq0}$, put
\[
 \mathcal E_r(\nu)
 :=\sum_{p\in\HH_n(\Fq)}\nu(p)^r
\]
and define
\begin{equation}\label{eq:weighted-normalized-rich-vectors}
 \widetilde\Omega_\lambda(\nu)
 :=
 \left\{w\in V\setminus\{0\}:
 \max_{\substack{\rho\in V\\\sigma(\rho,w)=1}}
 \sum_{p\in L_w(\rho)}\nu(p)\geq\lambda
 \right\}.
\end{equation}
Thus, $\widetilde\Omega_\lambda(\one_E)=\widetilde\Omega_\lambda(E)$.

\begin{proposition}
\label{prop:weighted-affine-rich}
Let $\nu:\HH_n(\Fq)\to\mathbb Z_{\geq0}$.  For every
$\lambda\geq1$,
\begin{equation}\label{eq:weighted-affine-rich}
 |\widetilde\Omega_\lambda(\nu)|
 \lesssim_n
 q^{r-1}
 \max\left\{
 \mathcal E_r(\nu)^{1/(r+1)},
 \frac{\mathcal E_r(\nu)}{\lambda^r}
 \right\}.
\end{equation}
\end{proposition}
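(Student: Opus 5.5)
We would run the proof of \cref{prop:polynomial-rich-line} again, with point-dependent multiplicities. The weight $\nu(p)$ enters as a scaling of the multiplicity imposed at $p$. Let $S:=\operatorname{supp}\nu$. If $\nu\equiv0$, then $\widetilde\Omega_\lambda(\nu)=\varnothing$ and there is nothing to prove, so assume $\mathcal E_r:=\mathcal E_r(\nu)\geq1$. Fix an integer parameter $m\geq1$, to be chosen later. At each $p\in S$ we impose
\[
 \operatorname{mult}^{\mathrm{hor}}_p(P)\geq m\nu(p).
\]
By \cref{lem:multiplicity-condition-count}, the total number of homogeneous linear conditions is at most
\[
 \sum_{p\in S}\binom{m\nu(p)+r-1}{r}\leq C_{\mathrm{cond}}\,m^r\mathcal E_r,
\]
using $\nu(p)\geq1$ on $S$. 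Comparing with $\dim\operatorname{Poly}_{\leq D}\geq D^{r+1}/(r+1)!$, exactly as in \cref{lem:horizontal-interpolation}, gives a nonzero $P$ with
\[
 D=\deg P\leq C_{\mathrm{int}}\,\mathcal E_r^{1/(r+1)}m^{r/(r+1)}.
\]

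Now fix $w\in\widetilde\Omega_\lambda(\nu)$, with $\rho_w$ satisfying $\sigma(\rho_w,w)=1$ and $\sum_{p\in L_w(\rho_w)}\nu(p)\geq\lambda$. By \cref{lem:line-multiplicity}, applied with multiplicity $m\nu(p)$, the restriction $R_{\rho_w,w}$ vanishes to order at least $m\nu(p)$ at the parameter of each $p\in L_w(\rho_w)\cap S$. The parameters of distinct points of the line are distinct. So if $R_{\rho_w,w}$ were nonzero, \cref{lem:polynomial-zero-bounds}(i) would give
\[
 m\lambda\leq m\sum_{p\in L_w(\rho_w)}\nu(p)\leq D.
\]
We choose
\[
 m=\bigl\lceil C_{\mathrm{gap}}\max\{1,\mathcal E_r/\lambda^{r+1}\}\bigr\rceil,
\]
with $C_{\mathrm{int}}C_{\mathrm{gap}}^{-1/(r+1)}<1$. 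The same computation as before then gives $D<m\lambda$, so $R_{\rho_w,w}\equiv0$. By \cref{lem:leading-term-on-line}, $\Gamma_P(w)=0$. Hence
\[
 \widetilde\Omega_\lambda(\nu)\subset\{\Gamma_P=0\},
\]
and Schwartz--Zippel gives $|\widetilde\Omega_\lambda(\nu)|\leq Dq^{r-1}$.

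It remains to bound $D$, with the same case split on $m_0=\max\{1,\mathcal E_r/\lambda^{r+1}\}$. If $\mathcal E_r\leq\lambda^{r+1}$, then $D\lesssim_n\mathcal E_r^{1/(r+1)}$. Otherwise,
\[
 D\lesssim_n\mathcal E_r^{1/(r+1)}\bigl(\mathcal E_r/\lambda^{r+1}\bigr)^{r/(r+1)}=\mathcal E_r/\lambda^r.
\]
This gives \eqref{eq:weighted-affine-rich}.

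The only genuinely new point, and the step to check most carefully, is the condition count. Multiplicity $m\nu(p)$ costs $O_n\bigl((m\nu(p))^r\bigr)$ conditions, which is exactly why $\mathcal E_r=\sum\nu^r$ appears. Here one should confirm that $\binom{k+r-1}{r}\leq C k^r$ holds uniformly for all $k\geq1$; it does, by $\binom{k+r-1}{r}\leq k^r$ up to a constant depending on $r$. One should also confirm that \cref{lem:line-multiplicity} applies verbatim with $m$ replaced by the integer $m\nu(p)$. Everything else is a verbatim transcription of the proof of \cref{prop:polynomial-rich-line}.
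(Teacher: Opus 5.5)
Your proposal is correct and follows essentially the same route as the paper: impose horizontal multiplicity $m\nu(p)$ at each point of the support, count $\lesssim_n m^r\mathcal E_r(\nu)$ conditions, choose $m\approx_n\max\{1,\mathcal E_r(\nu)/\lambda^{r+1}\}$ so that $D<m\lambda$, force the restriction to each rich line to vanish, and apply Schwartz--Zippel to $\Gamma_P$. The only difference is cosmetic: you absorb the degree ceiling into $C_{\mathrm{int}}$ (legitimate, since $\mathcal E_r(\nu)^{1/(r+1)}m^{r/(r+1)}\geq1$), whereas the paper keeps $D_0$ explicit and controls the extra $1/(m\lambda)$ term using $m\lambda\geq C_{\mathrm{mult}}$.
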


\begin{proof}
If $\mathcal E_r(\nu)=0$, then $\nu=0$ and
$\widetilde\Omega_\lambda(\nu)=\varnothing$.  We may therefore assume
that $\mathcal E_r(\nu)\geq1$.  Choose
constants $C_{\mathrm{dim}}=C_{\mathrm{dim}}(n)$ and
$C_{\mathrm{mult}}=C_{\mathrm{mult}}(n)$ below, and set
\[
 m
 :=
 \left\lceil
 C_{\mathrm{mult}}\max\left\{1,
 \frac{\mathcal E_r(\nu)}{\lambda^{r+1}}\right\}
 \right\rceil
\]
and
\[
 D_0
 :=
 \left\lceil
 C_{\mathrm{dim}}\mathcal E_r(\nu)^{1/(r+1)}m^{r/(r+1)}
 \right\rceil.
\]
For every $p$ with $\nu(p)>0$, require
\begin{equation}\label{eq:weighted-horizontal-multiplicity}
 \operatorname{mult}^{\mathrm{hor}}_p(P)\geq m\nu(p).
\end{equation}
By \cref{lem:multiplicity-condition-count}, the total number of
homogeneous linear conditions imposed on $P$ is at most
\[
 \sum_{\substack{p\in\HH_n(\Fq)\\\nu(p)>0}}
 \binom{m\nu(p)+r-1}{r}
 \lesssim_n
 m^r\mathcal E_r(\nu).
\]
The vector space of polynomials in $r+1$ variables of degree at most $D_0$ has dimension
\[
 \binom{D_0+r+1}{r+1}
 \gtrsim_n D_0^{r+1}.
\]
After increasing $C_{\mathrm{dim}}$, this dimension is larger than the number of conditions. Hence, there is a nonzero polynomial
$P\in\Fq[\mathbf Z,U]$ satisfying \eqref{eq:weighted-horizontal-multiplicity}.
Writing $D:=\deg P$, we have $D\leq D_0$.
Since $\mathcal E_r(\nu)>0$, these conditions force $P$ to vanish at some point; hence, $D\geq1$.

We now choose $C_{\mathrm{mult}}$ sufficiently large in terms of
$C_{\mathrm{dim}}$ and $n$. Since
\[
 m\lambda^{r+1}\geq C_{\mathrm{mult}}\mathcal E_r(\nu)
 \qquad\text{and}\qquad
 m\lambda\geq C_{\mathrm{mult}},
\]
we have
\[
 \frac{D_0}{m\lambda}
 \leq
 C_{\mathrm{dim}}
 \left(\frac{\mathcal E_r(\nu)}{m\lambda^{r+1}}\right)^{1/(r+1)}
 +\frac1{m\lambda}
 \leq
 C_{\mathrm{dim}}C_{\mathrm{mult}}^{-1/(r+1)}
 +C_{\mathrm{mult}}^{-1}.
\]
Thus, $C_{\mathrm{mult}}$ may be chosen so that
\begin{equation}\label{eq:weighted-degree-less-than-line-multiplicity}
 D\leq D_0<m\lambda.
\end{equation}
The definitions of $m$ and $D_0$ also give
\begin{equation}\label{eq:weighted-degree-bound}
 D
 \lesssim_n
 \max\left\{
 \mathcal E_r(\nu)^{1/(r+1)},
 \frac{\mathcal E_r(\nu)}{\lambda^r}
 \right\}.
\end{equation}
Indeed, if $\mathcal E_r(\nu)\leq\lambda^{r+1}$, then
$m\approx_n 1$. If $\mathcal E_r(\nu)>\lambda^{r+1}$, then
$m\approx_n \mathcal E_r(\nu)\lambda^{-(r+1)}$.

Fix $w\in \widetilde\Omega_\lambda(\nu)$, and choose $\rho_w$ such that
$\sigma(\rho_w,w)=1$ and
\[
 \sum_{p\in L_w(\rho_w)}\nu(p)\geq\lambda.
\]
For
\[
 R_{\rho_w,w}(s):=P(\rho_w+sw,s),
\]
\cref{lem:line-multiplicity} and
\eqref{eq:weighted-horizontal-multiplicity} show that the total
multiplicity of the zeros of $R_{\rho_w,w}$ is at least
\[
 m\sum_{p\in L_w(\rho_w)}\nu(p)
 \geq m\lambda>D.
\]
Since $\deg R_{\rho_w,w}\leq D$,
\cref{lem:polynomial-zero-bounds} implies that $R_{\rho_w,w}$ is identically zero.

Let $P^{\mathrm{top}}$ be the top homogeneous part of $P$.  For formal variables $\mathbf W=(W_1,\ldots,W_r)$, put
\[
 \Gamma_P(\mathbf W):=P^{\mathrm{top}}(\mathbf W,1).
\]
By \cref{lem:leading-term-on-line}, $\Gamma_P$ is a nonzero polynomial of degree at most $D$, and the coefficient of $s^D$ in $R_{\rho_w,w}(s)$ is $\Gamma_P(w)$. Hence, $\Gamma_P(w)=0$ for every
$w\in \widetilde\Omega_\lambda(\nu)$. By \cref{lem:polynomial-zero-bounds},
\[
 |\widetilde\Omega_\lambda(\nu)|\leq Dq^{r-1}.
\]
Combining this with \eqref{eq:weighted-degree-bound} proves the result.
\end{proof}

\begin{proposition}
\label{prop:integer-weighted-level}
Let $\nu:\HH_n(\Fq)\to\mathbb Z_{\geq0}$, and let
$1\leq\lambda\leq q$.  Then
\begin{equation}\label{eq:integer-weighted-level}
 \left|
 \left\{\vartheta\in\Dn:\Mrd \nu(\vartheta)\geq\lambda\right\}
 \right|
 \lesssim_n
 q^{r-1}\lambda^{-r}\mathcal E_r(\nu).
\end{equation}
\end{proposition}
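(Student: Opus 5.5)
The argument follows the proof of \cref{thm:rich-direction-estimate} almost verbatim, with \cref{prop:weighted-affine-rich} in place of \cref{prop:polynomial-rich-line} and $\mathcal E_r$ in place of $|E|$. Let $\Omega:=\{\vartheta\in\Dn:\Mrd\nu(\vartheta)\geq\lambda\}$ and assume $\Omega\neq\varnothing$; otherwise there is nothing to prove. Apply \cref{lem:covering-direction-set} to $\Omega$. This gives $g_1,\ldots,g_J\in\mathcal G_n$ with $J\lesssim q^r/|\Omega|$ and with $\bigcup_j g_j\cdot\Omega$ covering at least $\frac12q^r$ points of $\Dn^{\mathrm{aff}}$.

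Instead of a union of images, form the superposition
\[
 \nu_{\mathrm{cov}}:=\sum_{j=1}^J\nu\circ g_j^{-1},
\]
which is still $\mathbb Z_{\geq0}$-valued. Since $\nu\geq0$, we have $\nu_{\mathrm{cov}}\geq\nu\circ g_j^{-1}$ pointwise. Covariance \eqref{eq:maximal-covariance} then gives, for $\vartheta'=g_j\cdot\vartheta$ with $\vartheta\in\Omega$,
\[
 \Mrd\nu_{\mathrm{cov}}(\vartheta')\geq\Mrd(\nu\circ g_j^{-1})(g_j\cdot\vartheta)=\Mrd\nu(\vartheta)\geq\lambda.
\]
By \cref{lem:normalized-horizontal-lines}, the lines of direction $[w:1]$ are exactly the $L_w(\rho)$ with $\sigma(\rho,w)=1$. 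Hence $|\widetilde\Omega_\lambda(\nu_{\mathrm{cov}})|\geq\frac12q^r$.

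\Cref{prop:weighted-affine-rich} now yields
\[
 q^r\lesssim_n q^{r-1}\max\bigl\{\mathcal E_r(\nu_{\mathrm{cov}})^{1/(r+1)},\ \mathcal E_r(\nu_{\mathrm{cov}})\lambda^{-r}\bigr\}.
\]
If the first term dominates, then $\mathcal E_r(\nu_{\mathrm{cov}})\gtrsim_n q^{r+1}\geq q\lambda^r$, using $\lambda\leq q$. If the second dominates, we get $\mathcal E_r(\nu_{\mathrm{cov}})\gtrsim_n q\lambda^r$ directly. So in either case $\mathcal E_r(\nu_{\mathrm{cov}})\gtrsim_n q\lambda^r$.

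\textbf{The main obstacle} is the upper bound for $\mathcal E_r(\nu_{\mathrm{cov}})$. For sets, the union bound gave $|E_{\mathrm{cov}}|\leq J|E|$. Here, by Minkowski in $\ell^r$ or convexity of $x\mapsto x^r$, one only has
\[
 \mathcal E_r(\nu_{\mathrm{cov}})\leq J^r\mathcal E_r(\nu),
\]
which loses a power $J^{r-1}$ and is fatal. The superposition must therefore be replaced by the pointwise maximum
\[
 \nu_{\mathrm{cov}}:=\max_{j}\,\nu\circ g_j^{-1}.
\]
This is again integer-valued, still dominates each $\nu\circ g_j^{-1}$, so the richness argument above goes through unchanged. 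It also satisfies
\[
 \nu_{\mathrm{cov}}^r\leq\sum_j(\nu\circ g_j^{-1})^r,
\]
and since each $g_j$ is a bijection,
\[
 \mathcal E_r(\nu_{\mathrm{cov}})\leq J\,\mathcal E_r(\nu)\lesssim\frac{q^r\mathcal E_r(\nu)}{|\Omega|}.
\]
Combining this with the lower bound $\mathcal E_r(\nu_{\mathrm{cov}})\gtrsim_n q\lambda^r$ and rearranging gives
\[
 |\Omega|\lesssim_n q^{r-1}\lambda^{-r}\mathcal E_r(\nu),
\]
which is \eqref{eq:integer-weighted-level}.
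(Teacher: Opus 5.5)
Your proof is correct and follows the paper's argument. You apply the covering lemma to the level set, superpose the transported weights $\nu\circ g_j^{-1}$ so that the result dominates each of them, use \cref{prop:weighted-affine-rich} to get $\mathcal E_r(\nu_{\mathrm{cov}})\gtrsim_n q\lambda^r$, and close with the energy bound $\mathcal E_r(\nu_{\mathrm{cov}})\lesssim J\,\mathcal E_r(\nu)$.

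The only difference is the superposition. The paper uses $\bigl\lceil(\sum_j(\nu\circ g_j^{-1})^r)^{1/r}\bigr\rceil$ and pays a harmless factor $2^r$ for the ceiling. Your pointwise maximum is integer-valued automatically and gives $\mathcal E_r(\nu_{\mathrm{cov}})\le J\,\mathcal E_r(\nu)$ with constant $1$, which is slightly cleaner.
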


\begin{proof}
Write
\[
 \Omega_\lambda(\nu)
 :=
 \{\vartheta\in\Dn:\Mrd \nu(\vartheta)\geq\lambda\}.
\]
The assertion is immediate when $\Omega_\lambda(\nu)=\varnothing$.

Apply \cref{lem:covering-direction-set} directly to the set
$\Omega_\lambda(\nu)$.
It gives affine symplectic transformations $g_1,\ldots,g_J$, with
\[
 J\lesssim\frac{q^r}{|\Omega_\lambda(\nu)|},
\]
such that
\begin{equation}\label{eq:weighted-covered-directions}
 \left|
 \Dn^{\mathrm{aff}}\cap\bigcup_{j=1}^J(g_j\cdot\Omega_\lambda(\nu))
 \right|
 \geq\frac12q^r.
\end{equation}
Define
\begin{equation}\label{eq:weighted-superposition}
 \nu_{\mathrm{cov}}(p)
 :=
 \left\lceil
 \left(\sum_{j=1}^J\nu(g_j^{-1}(p))^r\right)^{1/r}
 \right\rceil.
\end{equation}
This is a nonnegative integer-valued function. If the expression inside the ceiling is nonzero, then it is at least $1$. Consequently,
\[
 \nu_{\mathrm{cov}}(p)^r
 \leq
 2^r\sum_{j=1}^J\nu(g_j^{-1}(p))^r.
\]
Since every $g_j$ is a bijection,
\begin{equation}\label{eq:weighted-superposition-norm}
 \mathcal E_r(\nu_{\mathrm{cov}})
 \leq
 2^rJ\mathcal E_r(\nu).
\end{equation}
We claim that every direction in the set on the left-hand side of
\eqref{eq:weighted-covered-directions} is $\lambda$-rich for
$\nu_{\mathrm{cov}}$. Indeed, suppose that $\vartheta'=g_j\cdot\vartheta$ with
$\vartheta\in\Omega_\lambda(\nu)$. Choose a
horizontal line $L$ of direction $\vartheta$ such that
\[
 \sum_{p\in L}\nu(p)\geq\lambda.
\]
Then $g_j(L)$ is a horizontal line of direction $\vartheta'$, and
\[
 \sum_{p\in g_j(L)}\nu_{\mathrm{cov}}(p)
 \geq
 \sum_{p\in g_j(L)}\nu(g_j^{-1}(p))
 =
 \sum_{p\in L}\nu(p)
 \geq\lambda.
\]
Under the identification
$\Dn^{\mathrm{aff}}=\{[w:1]: w\in V\setminus\{0\}\}$,
\eqref{eq:weighted-covered-directions} therefore implies
\[
 |\widetilde\Omega_\lambda(\nu_{\mathrm{cov}})|\geq\frac12q^r.
\]
Applying \cref{prop:weighted-affine-rich} to $\nu_{\mathrm{cov}}$ gives
\[
 q
 \lesssim_n
 \max\left\{
 \mathcal E_r(\nu_{\mathrm{cov}})^{1/(r+1)},
 \frac{\mathcal E_r(\nu_{\mathrm{cov}})}{\lambda^r}
 \right\}.
\]
If the second term is bounded below by a fixed multiple of $q$, then
\[
 \mathcal E_r(\nu_{\mathrm{cov}})
 \gtrsim_n q\lambda^r.
\]
If the first term is bounded below by a fixed multiple of $q$, then
\[
 \mathcal E_r(\nu_{\mathrm{cov}})
 \gtrsim_n q^{r+1}
 \geq q\lambda^r,
\]
where the last inequality uses $\lambda\leq q$.  Thus, in either case,
\[
 q\lambda^r
 \lesssim_n
 \mathcal E_r(\nu_{\mathrm{cov}}).
\]
Combining this with \eqref{eq:weighted-superposition-norm} and the bound
for $J$, we obtain
\[
 q\lambda^r
 \lesssim_n
 J\mathcal E_r(\nu)
 \lesssim_n
 \frac{q^r\mathcal E_r(\nu)}{|\Omega_\lambda(\nu)|}.
\]
Rearranging proves \eqref{eq:integer-weighted-level}.
\end{proof}

\begin{proof}[Proof of \cref{thm:function-level-set}]
Since
\[
 \Mrd F=\Mrd|F|
 \qquad\text{and}\qquad
 \norm{F}_{\ell^r(\HH_n(\Fq))}
 =
 \norm{|F|}_{\ell^r(\HH_n(\Fq))},
\]
we may assume that $F\geq0$.  Fix $\lambda>0$, and define
\[
 \nu_\lambda(p)
 :=
 \left\lfloor\frac{2qF(p)}{\lambda}\right\rfloor.
\]
This is a nonnegative integer-valued function.

Suppose that $\Mrd F(\vartheta)\geq\lambda$. Since the collection of horizontal lines of direction $\vartheta$ is finite, there is a horizontal line $L$ of direction $\vartheta$ such that
\[
 \sum_{p\in L}F(p)\geq\lambda.
\]
Every horizontal line has exactly $q$ points. Therefore,
\[
 \sum_{p\in L}\nu_\lambda(p)
 \geq
 \frac{2q}{\lambda}\sum_{p\in L}F(p)-q
 \geq q.
\]
It follows that
\[
 \{\vartheta\in\Dn:\Mrd F(\vartheta)\geq\lambda\}
 \subset
 \{\vartheta\in\Dn:\Mrd \nu_\lambda(\vartheta)\geq q\}.
\]
Applying \cref{prop:integer-weighted-level} with level $q$, we obtain
\begin{align*}
 \left|\{\vartheta\in\Dn:\Mrd F(\vartheta)\geq\lambda\}\right|
 &\lesssim_n
 q^{r-1}q^{-r}\mathcal E_r(\nu_\lambda)\\
 &\leq
 q^{r-1}q^{-r}
 \left(\frac{2q}{\lambda}\right)^r
 \sum_{p\in\HH_n(\Fq)}F(p)^r\\
 &\lesssim_n
 q^{r-1}\lambda^{-r}
 \norm{F}_{\ell^r(\HH_n(\Fq))}^r.
\end{align*}
This proves \eqref{eq:function-level-set}.

For sharpness, let $F$ be the indicator function of one point. Then $\Mrd F=1$ on $|\PP^{r-1}(\Fq)|\approx q^{r-1}$ refined directions.
\end{proof}

\section{The diagonal estimate}
\label{sec:diagonal}

At the reciprocal exponent pair
\begin{equation}\label{eq:diagonal-point}
 (\alpha,\beta)=\left(\frac1r,\frac1r\right),
 \qquad
 \Phi_n\left(\frac1r,\frac1r\right)=\frac{r-1}{r}.
\end{equation}
The point-mass example gives $\widetilde A_n^{\mathrm{rd}}(1/r,1/r)\geq(r-1)/r$, while \cref{thm:function-level-set} gives the corresponding distributional estimate with the same power of $q$.
\begin{theorem}
\label{thm:diagonal-estimate}
For every $F:\HH_n(\Fq)\to\C$,
\begin{equation}\label{eq:diagonal-estimate}
 \norm{\Mrd F}_{\ell^r(\Dn)}
 \lesssim_n
 q^{(r-1)/r}(1+\log q)^{1/r}
 \norm{F}_{\ell^r(\HH_n(\Fq))}.
\end{equation}
Consequently,
\begin{equation}\label{eq:diagonal-value}
 A_n^{\mathrm{rd}}(r,r)=\frac{r-1}{r}.
\end{equation}
The equality in \eqref{eq:diagonal-value} identifies the
infimum in \cref{def:critical-exponent}; it does not assert that the pure-power estimate at $(r,r)$ is attained.
\end{theorem}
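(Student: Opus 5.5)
We would deduce \eqref{eq:diagonal-estimate} from the weak-type bound \cref{thm:function-level-set} by a rearrangement argument that uses only the trivial a priori range of the maximal function. We may assume $F\geq0$ and, by homogeneity, $\norm{F}_{\ell^r}=1$. Then $\Mrd F\leq \norm{F}_{\ell^1(L)}\leq q^{1-1/r}\norm{F}_{\ell^r}$ for every horizontal line $L$, by H\"older on the $q$ points of $L$. Hence $\Mrd F\leq q^{(r-1)/r}$ pointwise. Let $\psi^*(1)\geq\psi^*(2)\geq\cdots$ be the decreasing rearrangement of $\Mrd F$ on $\Dn$, where $k$ ranges up to $|\Dn|<2q^r$.

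\cref{thm:function-level-set} with $\lambda=\psi^*(k)$ gives $k\leq |\{\Mrd F\geq\psi^*(k)\}|\lesssim_n q^{r-1}\psi^*(k)^{-r}$. Therefore
\[
 \psi^*(k)^r\lesssim_n \frac{q^{r-1}}{k}.
\]
Combining this with the trivial bound $\psi^*(k)^r\leq q^{r-1}$ yields
\[
 \norm{\Mrd F}_{\ell^r}^r=\sum_{k=1}^{|\Dn|}\psi^*(k)^r\lesssim_n q^{r-1}\sum_{k=1}^{2q^r}\frac1k\lesssim_n q^{r-1}(1+r\log q).
\]
Taking $r$th roots gives \eqref{eq:diagonal-estimate}, with the factor $(1+\log q)^{1/r}$ coming from the harmonic sum. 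Equivalently, one could slice $\Mrd F$ dyadically in $\lambda\in[q^{-(r+1)/r}\cdot(\text{small}),q^{(r-1)/r}]$. Each dyadic piece contributes $\lesssim_n q^{r-1}$ by the level-set bound, and there are $O(\log q)$ nontrivial pieces. The lower range is harmless because $|\Dn|\lambda^r$ is tiny there. We would use the rearrangement version, since it avoids choosing the cutoff.

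For \eqref{eq:diagonal-value}: the upper bound $A_n^{\mathrm{rd}}(r,r)\leq (r-1)/r$ follows because, for each $\varepsilon>0$, $(1+\log q)^{1/r}\lesssim_\varepsilon q^{\varepsilon}$. Thus the power $q^{(r-1)/r+\varepsilon}$ is admissible, and taking the infimum gives the bound. The matching lower bound is \cref{prop:lower-bounds} at $(\alpha,\beta)=(1/r,1/r)$, where $\Phi_n=(r-1)/r$ is realized by $\phi_1$ (the point mass); this was already computed in \eqref{eq:diagonal-point}.

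No step is a real obstacle, because the deep input is \cref{thm:function-level-set}. The only point needing care is that the harmonic sum is cut off at $k\approx|\Dn|\approx q^r$, so that the logarithm is $\log q$ rather than something unbounded. This requires nothing beyond the finiteness of $\Dn$.
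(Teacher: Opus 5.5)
Your proposal is correct and follows essentially the same route as the paper. You order the values of $\Mrd F$, apply \cref{thm:function-level-set} at $\lambda=\psi^*(k)$ to get $\psi^*(k)^r\lesssim_n q^{r-1}k^{-1}\norm{F}_{\ell^r}^r$, and sum the harmonic series up to $|\Dn|<2q^r$. You then identify the infimum via $(1+\log q)^{1/r}\lesssim_\varepsilon q^\varepsilon$ together with the point-mass lower bound.

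Two small points. The H\"older bound $\Mrd F\leq q^{(r-1)/r}\norm{F}_{\ell^r}$ is harmless but plays no role in the rearrangement version; it is only needed for your dyadic alternative. Also, since the level-set theorem requires $\lambda>0$, you should note that indices with $\psi^*(k)=0$ satisfy the bound trivially, as the paper does.
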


\begin{proof}
List the $|\Dn|$ values of $\Mrd F$ in nonincreasing order:
\[
 a_1\geq a_2\geq\cdots\geq a_{|\Dn|}\geq0.
\]
If $a_j>0$, then at least $j$ directions $\vartheta\in\Dn$ satisfy
$\Mrd F(\vartheta)\geq a_j$.  Applying
\eqref{eq:function-level-set} with $\lambda=a_j$ and rearranging gives
\[
 a_j^r
 \lesssim_n
 \frac{q^{r-1}}{j}
 \norm{F}_{\ell^r(\HH_n(\Fq))}^r.
\]
The last inequality is also immediate when $a_j=0$. Summing over $j$
therefore gives
\[
 \norm{\Mrd F}_{\ell^r(\Dn)}^r
 =
 \sum_{j=1}^{|\Dn|}a_j^r
 \lesssim_n
 q^{r-1}
 \left(\sum_{j=1}^{|\Dn|}\frac1j\right)
 \norm{F}_{\ell^r(\HH_n(\Fq))}^r.
\]
Since $|\Dn|=q+\cdots+q^r<2q^r$,
\[
 \sum_{j=1}^{|\Dn|}\frac1j
 \leq 1+\log|\Dn|
 \lesssim_n
 1+\log q.
\]
Taking the $r$th root proves \eqref{eq:diagonal-estimate}.

It remains to identify the critical exponent. For every $\varepsilon>0$,
\[
 (1+\log q)^{1/r}\lesssim_{\varepsilon,n}q^\varepsilon.
\]
Thus, for every $\varepsilon>0$, the exponent $(r-1)/r+\varepsilon$ is admissible in \cref{def:critical-exponent}. Taking the infimum over the admissible exponents gives
\[
 A_n^{\mathrm{rd}}(r,r)
 \leq
 \frac{r-1}{r}.
\]
Conversely, let $F=\one_{\{p_0\}}$ be the indicator function of one point. Then
\[
 \norm{F}_{\ell^r(\HH_n(\Fq))}=1,
\]
while $\Mrd F=1$ on exactly
$|\PP^{r-1}(\Fq)|\approx q^{r-1}$ refined directions. Hence,
\[
 \norm{\Mrd F}_{\ell^r(\Dn)}
 \approx
 q^{(r-1)/r},
\]
which gives the reverse inequality and proves \eqref{eq:diagonal-value}.
\end{proof}

\begin{proposition}
\label{prop:critical-input-line}
For every $1\leq v\leq\infty$ with $v\neq r$ and every
$F:\HH_n(\Fq)\to\C$,
\begin{equation}\label{eq:critical-input-line}
 \norm{\Mrd F}_{\ell^v(\Dn)}
 \lesssim_{n,v}
 \begin{cases}
  q^{r/v-1/r}\norm{F}_{\ell^r(\HH_n(\Fq))},
    &1\leq v<r,\\[2mm]
  q^{(r-1)/r}\norm{F}_{\ell^r(\HH_n(\Fq))},
    &r<v\leq\infty.
 \end{cases}
\end{equation}
In both cases the displayed power is $q^{\Phi_n(1/r,1/v)}$.
\end{proposition}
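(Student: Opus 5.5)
The plan is to reuse the rearrangement argument from the proof of \cref{thm:diagonal-estimate}, with $\ell^r$ replaced by $\ell^v$. Normalize $\norm{F}_{\ell^r(\HH_n(\Fq))}=1$ and take $F\geq0$. List the values of $\Mrd F$ in nonincreasing order,
\[
 a_1\geq a_2\geq\cdots\geq a_{|\Dn|}\geq 0.
\]
\Cref{thm:function-level-set} with $\lambda=a_j$ gives
\[
 a_j\lesssim_n q^{(r-1)/r}j^{-1/r}.
\]
The trivial bound from \cref{lem:line-geometry} gives $a_j\leq \norm{F}_{\ell^1(L)}\leq q^{1-1/r}$ by H\"older on a single line of $q$ points. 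This is the same bound as the $j=1$ case, so it adds nothing. Everything therefore reduces to summing this weak-type sequence.

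\emph{Case $r<v\leq\infty$.} Compute
\[
 \norm{\Mrd F}_{\ell^v}^v=\sum_j a_j^v\lesssim_n q^{v(r-1)/r}\sum_j j^{-v/r}.
\]
The series converges because $v/r>1$, with a constant depending only on $v$ and $r$. This gives $q^{(r-1)/r}$ directly. For $v=\infty$ use $a_1$ alone. Check: $\Phi_n(1/r,1/v)$ equals $(r-1)/r$ here, since $\phi_2=(r-1)/r$ dominates when $\beta<1/r$. Indeed $\phi_1=(r-1)\beta<(r-1)/r$, $\phi_3=r\beta-1/r<(r-1)/r$, and $\phi_4=1+r\beta-(r+1)/r=r\beta-1/r$.

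\emph{Case $1\leq v<r$.} The same sum becomes
\[
 q^{v(r-1)/r}\sum_{j\leq|\Dn|}j^{-v/r}\lesssim q^{v(r-1)/r}|\Dn|^{1-v/r}\lesssim q^{v(r-1)/r+r-v}.
\]
Taking $v$th roots gives the power $(r-1)/r+r/v-1=r/v-1/r$. This matches $\phi_3=\phi_4$ at $\alpha=1/r$, and these dominate $\phi_1,\phi_2$ when $\beta>1/r$. Again the constant depends only on $v$ and $n$, and there is no logarithm, since $v/r\neq1$.

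The only delicate step, and not a real obstacle, is confirming that the constants from the harmonic-type sums are uniform in $q$ away from $v=r$. In the first case the series $\sum_j j^{-v/r}$ converges. In the second, $\sum_{j\leq N}j^{-s}\leq N^{1-s}/(1-s)$ for $s<1$. The identification with $\Phi_n$ is the routine comparison of the four functionals carried out above.
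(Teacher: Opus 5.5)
Your proposal is correct and follows the paper's proof essentially verbatim. Both order the values of $\Mrd F$, use \cref{thm:function-level-set} to get $a_j\lesssim_n q^{(r-1)/r}j^{-1/r}\norm{F}_{\ell^r(\HH_n(\Fq))}$, and then sum $j^{-v/r}$: the series converges for $v>r$, and for $v<r$ it is $\lesssim |\Dn|^{1-v/r}\lesssim q^{r-v}$. Your identification of $\Phi_n(1/r,1/v)$ through the four functionals is also correct. The H\"older remark on a single line is harmless but unnecessary.
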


\begin{proof}
List the values of $\Mrd F$ as in the proof of
\cref{thm:diagonal-estimate}. The level-set estimate gives
\begin{equation}\label{eq:critical-input-rearrangement}
 a_j
 \lesssim_n
 q^{(r-1)/r}j^{-1/r}
 \norm{F}_{\ell^r(\HH_n(\Fq))}.
\end{equation}
If $r<v<\infty$, then
\[
 \sum_{j=1}^{|\Dn|}j^{-v/r}\lesssim_{r,v}1,
\]
and \eqref{eq:critical-input-rearrangement} gives the second bound in
\eqref{eq:critical-input-line}. The case $v=\infty$ follows directly
from the estimate for $a_1$.

If $1\leq v<r$, then
\[
 \sum_{j=1}^{|\Dn|}j^{-v/r}
 \lesssim_{r,v}
 |\Dn|^{1-v/r}.
\]
Since $|\Dn|<2q^r$, summing \eqref{eq:critical-input-rearrangement} and taking the $v$th root gives
\[
 \norm{\Mrd F}_{\ell^v(\Dn)}
 \lesssim_{n,v}
 q^{(r-1)/r}q^{r(1/v-1/r)}
 \norm{F}_{\ell^r(\HH_n(\Fq))}
 =q^{r/v-1/r}
 \norm{F}_{\ell^r(\HH_n(\Fq))}.
\]
Finally, direct substitution into \eqref{eq:four-functionals} and
\eqref{eq:Phi} gives
\[
 \Phi_n\left(\frac1r,\frac1v\right)
 =
 \begin{cases}
  r/v-1/r,&v<r,\\
  (r-1)/r,&v>r.
 \end{cases}
\]
Only $v=r$ produces the harmonic sum and hence the logarithmic factor.
\end{proof}

\section{The complete range of exponents}
\label{sec:mixed-norm-estimates}

We first record the value of the critical exponent at six reciprocal
exponent pairs.

\begin{lemma}
\label{lem:six-values}
\begin{equation}\label{eq:six-values}
\begin{aligned}
 \widetilde A_n^{\mathrm{rd}}(0,0)&=1,
 &
 \widetilde A_n^{\mathrm{rd}}(1,0)&=0,
 &
 \widetilde A_n^{\mathrm{rd}}(0,1)&=r+1,\\
 \widetilde A_n^{\mathrm{rd}}(1,1)&=r-1,
 &
 \widetilde A_n^{\mathrm{rd}}\left(\frac1r,\frac1r\right)
 &=\frac{r-1}{r},
 &
 \widetilde A_n^{\mathrm{rd}}\left(\frac1r,1\right)
 &=r-\frac1r.
\end{aligned}
\end{equation}
The corresponding strong estimates at $(0,0),(1,0),(0,1),(1,1)$ and $(1/r,1)$ hold with the displayed pure powers. At $(1/r,1/r)$, the value shown is the infimum of the admissible powers, and the available strong estimate has the additional factor
$(1+\log q)^{1/r}$.
\end{lemma}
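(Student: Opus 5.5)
By \cref{prop:lower-bounds}, at each of the six points $\widetilde A_n^{\mathrm{rd}}\geq\Phi_n$. So the plan is: evaluate $\Phi_n$ at each point, then prove a strong estimate with exactly that power of $q$.

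\emph{Evaluating $\Phi_n$.} Direct substitution into \eqref{eq:four-functionals} gives:
\begin{itemize}
\item $\Phi_n(0,0)=\max\{0,1,0,1\}=1$;
\item $\Phi_n(1,0)=\max\{0,0,-1,-r\}=0$;
\item $\Phi_n(0,1)=\max\{r-1,1,r,r+1\}=r+1$;
\item $\Phi_n(1,1)=\max\{r-1,0,r-1,0\}=r-1$;
\item $\Phi_n(1/r,1/r)=(r-1)/r$, as in \eqref{eq:diagonal-point};
\item $\Phi_n(1/r,1)=\max\{r-1,1-1/r,r-1/r,r-1/r\}=r-1/r$.
\end{itemize}
These are exactly the displayed values.

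\emph{Matching upper bounds.} Each comes from a short argument.
\begin{itemize}
\item $(1,0)$, i.e.\ $\ell^1\to\ell^\infty$: for any line $L$, $\sum_{p\in L}|F(p)|\leq\norm{F}_{\ell^1}$, giving power $q^0$.
\item $(0,0)$, i.e.\ $\ell^\infty\to\ell^\infty$: $|L|=q$ gives $\Mrd F\leq q\norm{F}_\infty$.
\item $(1,1)$, i.e.\ $\ell^1\to\ell^1$: use a maximizing selector and \eqref{eq:selector-dominated}. Then $\sum_\vartheta\sum_{p\in L_\vartheta}|F(p)|=\sum_p|F(p)|\cdot\#\{\vartheta:p\in L_\vartheta\}$. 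By \cref{lem:line-geometry}, each $p$ lies on at most $|\PP^{r-1}(\Fq)|\lesssim q^{r-1}$ lines with distinct refined directions, so the selected lines through $p$ number at most that many. This gives power $q^{r-1}$.
\item $(0,1)$, i.e.\ $\ell^\infty\to\ell^1$: $\norm{\Mrd F}_{\ell^1}\leq|\Dn|\,q\,\norm{F}_\infty\lesssim q^{r+1}\norm{F}_\infty$.
\item $(1/r,1/r)$: this is \cref{thm:diagonal-estimate}, including the logarithmic factor and the identification of the infimum.
\item $(1/r,1)$: this is the case $v=1<r$ of \cref{prop:critical-input-line}, with power $q^{r-1/r}$.
\end{itemize}

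\emph{Combining.} In each case the strong bound gives $\widetilde A_n^{\mathrm{rd}}\leq$ the displayed value, and the lower bound gives equality. The pure-power claims hold at every point except $(1/r,1/r)$, where only the $\log$-loss estimate is available. There is no real obstacle. The only points needing care are the multiplicity count in the $(1,1)$ case, which relies on \cref{lem:line-geometry}'s distinct-directions statement, and citing the earlier diagonal results correctly.
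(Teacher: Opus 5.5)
Your proposal is correct and follows essentially the same route as the paper. The paper also uses the four elementary corner bounds (stated for arbitrary selectors $T_{\mathcal S}$, which your $\Mrd$ bounds imply via \eqref{eq:selector-dominated}) and the lower bounds from \cref{prop:lower-bounds}, and it reads off $(1/r,1/r)$ and $(1/r,1)$ from \cref{thm:diagonal-estimate} and the case $v=1$ of \cref{prop:critical-input-line}; the only slip is that in the $(1,1)$ case the maximizing selector you need is \eqref{eq:max-selector}, not \eqref{eq:selector-dominated}.
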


\begin{proof}
For every selector $\mathcal S$, the elementary estimates
\begin{align}
 \norm{T_{\mathcal S}f}_{\ell^\infty(\Dn)}
 &\leq q\norm{f}_{\ell^\infty(\HH_n(\Fq))},
 \label{eq:value-infty-infty}\\
 \norm{T_{\mathcal S}f}_{\ell^\infty(\Dn)}
 &\leq\norm{f}_{\ell^1(\HH_n(\Fq))},
 \label{eq:value-one-infty}\\
 \norm{T_{\mathcal S}f}_{\ell^1(\Dn)}
 &\leq q|\Dn|\norm{f}_{\ell^\infty(\HH_n(\Fq))}
 \lesssim q^{r+1}\norm{f}_{\ell^\infty(\HH_n(\Fq))},
 \label{eq:value-infty-one}\\
 \norm{T_{\mathcal S}f}_{\ell^1(\Dn)}
 &\leq |\PP^{r-1}(\Fq)|\norm{f}_{\ell^1(\HH_n(\Fq))}
 \lesssim q^{r-1}\norm{f}_{\ell^1(\HH_n(\Fq))}
 \label{eq:value-one-one}
\end{align}
give the asserted upper bounds at the four corners. The lower bounds in \cref{prop:lower-bounds} show that they are sharp. The value at $(1/r,1/r)$ is \cref{thm:diagonal-estimate}.

At $(1/r,1)$, \cref{prop:critical-input-line} with $v=1$
gives the pure-power upper bound
\[
 \norm{\Mrd F}_{\ell^1(\Dn)}
 \lesssim_n
 q^{r-1/r}\norm{F}_{\ell^r(\HH_n(\Fq))}.
\]
Since
\[
 \Phi_n\left(\frac1r,1\right)
 =\phi_3\left(\frac1r,1\right)
 =\phi_4\left(\frac1r,1\right)
 =r-\frac1r.
\]
\cref{prop:lower-bounds} yields
$\widetilde A_n^{\mathrm{rd}}\left(\frac1r,1\right)\geq r-\frac1r$, proving equality.
\end{proof}

The four affine functionals satisfy
\begin{equation}\label{eq:dominance-identities}
\begin{aligned}
 \phi_3-\phi_1&=\beta-\alpha,
 &\qquad
 \phi_4-\phi_2&=r(\beta-\alpha),\\
 \phi_1-\phi_2&=\alpha+(r-1)\beta-1,
 &
 \phi_4-\phi_3&=1-r\alpha.
\end{aligned}
\end{equation}
Define the four cells
\begin{equation}\label{eq:four-cells}
\begin{aligned}
 \mathcal C_2
 &=\conv\left\{(0,0),(1,0),\left(\frac1r,\frac1r\right)\right\},\\
 \mathcal C_1
 &=\conv\left\{(1,0),\left(\frac1r,\frac1r\right),(1,1)\right\},\\
 \mathcal C_4
 &=\conv\left\{(0,0),(0,1),\left(\frac1r,1\right),
                    \left(\frac1r,\frac1r\right)\right\},\\
 \mathcal C_3
 &=\conv\left\{\left(\frac1r,\frac1r\right),
                    \left(\frac1r,1\right),(1,1)\right\}.
\end{aligned}
\end{equation}

\begin{lemma}
\label{lem:four-cells}
The cells in \eqref{eq:four-cells} cover $[0,1]^2$, and
\begin{equation}\label{eq:Phi-on-cells}
 \Phi_n=
 \begin{cases}
  \phi_2,&\text{on }\mathcal C_2,\\
  \phi_1,&\text{on }\mathcal C_1,\\
  \phi_4,&\text{on }\mathcal C_4,\\
  \phi_3,&\text{on }\mathcal C_3.
 \end{cases}
\end{equation}
\end{lemma}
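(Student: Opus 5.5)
The plan is to reduce everything to sign checks of the four differences in \eqref{eq:dominance-identities}, carried out vertex by vertex on each cell. Each difference is affine, so its sign on a convex polygon is determined by its signs at the vertices. On a cell, $\Phi_n=\phi_k$ as soon as $\phi_k-\phi_j\geq0$ holds at every vertex of the cell for every $j\neq k$. The differences not listed in \eqref{eq:dominance-identities} are sums of listed ones. For instance, $\phi_4-\phi_1=(\phi_4-\phi_3)+(\phi_3-\phi_1)=1-r\alpha+\beta-\alpha$, and $\phi_3-\phi_2=(\phi_3-\phi_1)+(\phi_1-\phi_2)=r\beta-1$.

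The vertex checks run as follows.

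\begin{enumerate}[label=\textup{(\roman*)}]
\item On $\mathcal C_2$, with vertices $(0,0),(1,0),(1/r,1/r)$, we need $\phi_2-\phi_1=1-\alpha-(r-1)\beta$, which takes the values $1,0,0$. We need $\phi_2-\phi_4=r(\alpha-\beta)$, taking $0,r,0$. We need $\phi_2-\phi_3=1-r\beta$, taking $1,1,0$. All are nonnegative.
\item On $\mathcal C_1$, with vertices $(1,0),(1/r,1/r),(1,1)$, we need $\phi_1-\phi_2$, taking $0,0,r-1$. We need $\phi_1-\phi_3=\alpha-\beta$, taking $1,0,0$. We need $\phi_1-\phi_4=(r+1)\alpha-\beta-1$, taking $r,0,r-1$.
\item On $\mathcal C_4$, with vertices $(0,0),(0,1),(1/r,1),(1/r,1/r)$, we need $\phi_4-\phi_3=1-r\alpha\geq0$, since $\alpha\leq1/r$ throughout. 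We need $\phi_4-\phi_2=r(\beta-\alpha)$, taking $0,r,r-1,0$. We need $\phi_4-\phi_1$, taking $1,2,1-1/r,0$.
\item On $\mathcal C_3$, with vertices $(1/r,1/r),(1/r,1),(1,1)$, we need $\phi_3-\phi_4=r\alpha-1$, taking $0,0,r-1$. We need $\phi_3-\phi_1=\beta-\alpha$, taking $0,1-1/r,0$. We need $\phi_3-\phi_2=r\beta-1$, taking $0,r-1,r-1$.
\end{enumerate}

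This proves \eqref{eq:Phi-on-cells}, and it also shows the formulas agree on overlaps.

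For the covering claim, I would locate an arbitrary $(\alpha,\beta)\in[0,1]^2$ explicitly.

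\begin{enumerate}[label=\textup{(\roman*)}]
\item If $\alpha\leq1/r$, then the point lies in $\mathcal C_4$ when $\beta\geq\alpha$. The reason is that $\mathcal C_4$ is the region $\{0\leq\alpha\leq1/r,\ \alpha\leq\beta\leq1\}$, a quadrilateral whose lower edge runs from $(0,0)$ to $(1/r,1/r)$ along the diagonal. When $\beta<\alpha\leq 1/r$, the point lies in the triangle $\mathcal C_2$, below the diagonal and with $\beta\leq 1/r$.
\item If $\alpha\geq1/r$, I split according to the position relative to the lines through $(1/r,1/r)$.
\begin{itemize}
\item Where $\beta\geq\alpha$, the point lies in $\mathcal C_3=\{\alpha\geq 1/r,\ \beta\geq\alpha\}$, the triangle between the diagonal, the line $\alpha=1/r$ and the line $\beta=1$.
\item Where $\beta\leq\alpha$ and $\alpha+(r-1)\beta\geq1$, the point lies in $\mathcal C_1$. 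This is the triangle bounded by the diagonal, the edge $\alpha=1$, and the segment from $(1,0)$ to $(1/r,1/r)$, which is exactly $\phi_1=\phi_2$.
\item Where $\alpha+(r-1)\beta\leq1$ and $\beta\leq\alpha$, the point lies in $\mathcal C_2$, which is described by $\beta\geq0$, $\beta\leq\alpha$ and $\alpha+(r-1)\beta\leq1$.
\end{itemize}
\end{enumerate}

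The step requiring the most care is this identification of each convex hull with the corresponding system of linear inequalities. For a triangle, I would verify that each defining inequality holds with equality on two vertices and strictly on the third, which gives the description immediately. For the quadrilateral $\mathcal C_4$, I would note that it is the triangle $\mathcal C_2$ reflected across the diagonal union the remaining piece, or more simply check that its four edges lie on $\alpha=0$, $\beta=1$, $\alpha=1/r$ and $\beta=\alpha$. The rest is the routine vertex arithmetic above.
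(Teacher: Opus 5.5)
Your proof is correct. The twelve vertex evaluations, the auxiliary differences $\phi_4-\phi_1=1+\beta-(r+1)\alpha$ and $\phi_3-\phi_2=r\beta-1$, and the half-plane descriptions of the four cells all check out. Your organization differs from the paper's, although both rest on the same three lines through $(1/r,1/r)$: $\beta=\alpha$, $\alpha+(r-1)\beta=1$ and $\alpha=1/r$.

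The paper first cuts $[0,1]^2$ along the diagonal. The identities $\phi_3-\phi_1=\beta-\alpha$ and $\phi_4-\phi_2=r(\beta-\alpha)$ immediately give $\Phi_n=\max\{\phi_1,\phi_2\}$ below it and $\Phi_n=\max\{\phi_3,\phi_4\}$ above it. So only one comparison survives in each half, and its zero line cuts the half-triangle into the two cells. This proves the covering and the identification of $\Phi_n$ at the same time and explains where the cells come from. It takes for granted that each chord cuts the half-triangle into exactly the stated convex hulls.

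Your route separates the two claims. The identification uses only the vertex description of each cell and the fact that an affine function on a polygon is a convex combination of its vertex values. The price is checking all three competitors on every cell. The covering uses the half-plane descriptions, which you justify explicitly. This is more mechanical, but it leaves nothing implicit and makes consistency on overlaps automatic.

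One small imprecision: when $\beta<\alpha\le 1/r$, the point lies in $\mathcal C_2$ because $\alpha+(r-1)\beta<r\alpha\le 1$, not because $\beta\le 1/r$. The latter alone would not suffice; consider $(1,1/r)$. Your half-plane description of $\mathcal C_2$ gives the correct reason at once.
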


\begin{proof}
Below the diagonal $\beta=\alpha$, the first two identities in
\eqref{eq:dominance-identities} imply
\[
 \Phi_n=\max\{\phi_1,\phi_2\}.
\]
The line $\alpha+(r-1)\beta=1$ meets the boundary of the triangle
$\{0\leq\beta\leq\alpha\leq1\}$ at $(1,0)$ and $(1/r,1/r)$.  By the third identity in
\eqref{eq:dominance-identities}, it separates this triangle into
$\mathcal C_2$, where $\phi_2\geq \phi_1$, and $\mathcal C_1$, where
$\phi_1\geq \phi_2$.

Above the diagonal, the same identities give
\[
 \Phi_n=\max\{\phi_3,\phi_4\}.
\]
The line $\alpha=1/r$ meets the boundary of
$\{0\leq\alpha\leq\beta\leq1\}$ at $(1/r,1/r)$ and $(1/r,1)$.  By the last identity in
\eqref{eq:dominance-identities}, it separates this triangle into
$\mathcal C_4$, where $\phi_4\geq \phi_3$, and $\mathcal C_3$, where $\phi_3\geq \phi_4$.  This proves both the covering and
\eqref{eq:Phi-on-cells}.
\end{proof}

\begin{proof}[Proof of \cref{thm:main}]
The lower bound
\[
 \widetilde A_n^{\mathrm{rd}}(\alpha,\beta)\geq\Phi_n(\alpha,\beta)
\]
is \cref{prop:lower-bounds}.

Suppose first that $(\alpha,\beta)\in\mathcal C_2$. Express
$(\alpha,\beta)$ as a convex combination of $(0,0),(1,0)$, and $(1/r,1/r)$. By \cref{lem:convexity,lem:six-values}, the corresponding convex combination of the values of $\widetilde A_n^{\mathrm{rd}}$ is an upper bound for $\widetilde A_n^{\mathrm{rd}}(\alpha,\beta)$. The affine functional $\phi_2$ agrees with $\Phi_n$ at all three vertices and throughout $\mathcal C_2$. It follows that
\[
 \widetilde A_n^{\mathrm{rd}}(\alpha,\beta)
 \leq \phi_2(\alpha,\beta)=\Phi_n(\alpha,\beta).
\]
The same argument applies to $\mathcal C_1$, using the vertices
$(1,0)$, $(1/r,1/r)$, and $(1,1)$ together with $\phi_1$.  For
$\mathcal C_4$, use a finite convex combination of $(0,0)$, $(0,1)$, $(1/r,1)$, and $(1/r,1/r)$ together with $\phi_4$.  Finally, for $\mathcal C_3$, use $(1/r,1/r)$, $(1/r,1)$, and $(1,1)$ together with $\phi_3$. This proves \eqref{eq:main-phase} and \eqref{eq:main-uv}.

The diagonal estimate \eqref{eq:main-quantitative-diagonal} is
\cref{thm:diagonal-estimate}. It remains to prove the pure-power estimate \eqref{eq:main-quantitative} away from the critical point.  Put
\[
 \begin{gathered}
 P=\left(\frac1r,\frac1r\right),\quad
 O=(0,0),\quad A=(1,0),\\
 B=(1,1),\quad C=(0,1),\quad
 R=\left(\frac1r,1\right).
 \end{gathered}
\]
For every selector $\mathcal S$, \eqref{eq:selector-dominated} and
\cref{thm:function-level-set} give the uniform distributional estimate
\begin{equation}\label{eq:selector-critical-level-set}
 \bigl|\{\vartheta\in\Dn:
       |T_{\mathcal S}f(\vartheta)|\geq\lambda\}\bigr|
 \lesssim_n
 q^{r-1}\lambda^{-r}
 \norm{f}_{\ell^r(\HH_n(\Fq))}^r.
\end{equation}
The elementary estimates \eqref{eq:value-infty-infty}, \eqref{eq:value-one-infty}, and \eqref{eq:value-one-one} are pure-power strong estimates at $O$, $A$, and $B$, respectively.  Applying the Marcinkiewicz interpolation principle \eqref{eq:marcinkiewicz-interpolation} between \eqref{eq:selector-critical-level-set} and these three estimates gives pure-power strong estimates at every point, other than $P$, on the segments $PO$, $PA$, and $PB$. Along $PO$ and $PB$ the interpolated input and output exponents are equal. Along $PA$ they satisfy $u<v$, so the Lorentz-space embedding used in \eqref{eq:marcinkiewicz-interpolation} applies. Because
$\Phi_n$ is affine on each adjacent cell, the interpolated power of $q$ is exactly $\Phi_n$ on all three segments.

By \cref{prop:critical-input-line}, pure-power strong estimates also hold at every point of the segment $PR$ other than $P$.  We now fill the cells using the Riesz--Thorin principle for the selector operators.  For a triangle $\conv\{P,V_1,V_2\}$ and $0<t<1$, set
\[
 V_{i,t}:=(1-t)P+tV_i\qquad(i=1,2).
\]
The truncated quadrilaterals
\[
 \conv\{V_1,V_2,V_{1,t},V_{2,t}\},\qquad 0<t<1,
\]
increase to $\conv\{P,V_1,V_2\}\setminus\{P\}$ as $t\downarrow0$.
Apply this observation to
\[
 \mathcal C_2=\conv\{P,O,A\},\qquad
 \mathcal C_1=\conv\{P,A,B\},\qquad
 \mathcal C_3=\conv\{P,R,B\}.
\]
Every vertex of each truncated quadrilateral has a pure-power strong selector estimate. Since the relevant $\phi_j$ is affine on the cell, finite Riesz--Thorin interpolation gives the power $q^{\Phi_n(\alpha,\beta)}$ at every point of these three cells other than $P$.

Finally, split
\[
 \mathcal C_4
 =\conv\{P,O,R\}\cup\conv\{O,C,R\}.
\]
The first triangle is handled by the same truncation argument using the segments $PO$ and $PR$, while the second has the three pure-power vertices $O,C,R$. Thus, every point of $\mathcal C_4\setminus\{P\}$ also satisfies the required uniform selector estimate. Apply the resulting estimate to $|F|$ and choose a maximizing selector as in \eqref{eq:max-selector}. This proves \eqref{eq:main-quantitative} at every
$(\alpha,\beta)\neq P$.
\end{proof}

\begin{remark}[The logarithmic factor at the critical diagonal]
\label{rem:exact-endpoint}

\setlength{\emergencystretch}{2em}
\setlength{\parskip}{0pt}
The strong mixed-norm estimates have the optimal pure power of $q$ at every pair except possibly $(u,v)=(2n,2n)$. At that point we have
\[
 \norm{\Mrd F}_{\ell^{2n}(\Dn)}
 \lesssim_n
 q^{(2n-1)/(2n)}(1+\log q)^{1/(2n)}
 \norm{F}_{\ell^{2n}(\HH_n(\Fq))}.
\]
The level-set estimate itself has no logarithmic loss.  The factor above arises only from the harmonic sum over the ordered values of the maximal function, as seen in the proof of \cref{thm:diagonal-estimate}; every noncritical sum is handled without it in
\cref{prop:critical-input-line}. Thus, $(2n-1)/(2n)$ is presently known to be the infimum of the admissible diagonal powers, but it is not known to be a minimum. Whether the factor can be removed remains open for $n\geq2$.
When $n=1$, the Fourier argument of~\cite{PPTX26} gives the corresponding logarithm-free diagonal estimate, so the critical power is attained there.
\end{remark}

\raggedbottom

\end{document}